\newcommand{\1}{\mathds{1}}
\newcommand{\s}{\mathcal}
\newcommand{\id}{\text{id}}
\newcommand{\im}{\text{im}}
\newcommand{\rk}{\text{rk}}
\newcommand{\ob}{\text{ob}}
\renewcommand{\S}{\mathbb N\mathcal O}
\renewcommand{\char}{\text{char}}
\newcommand{\Set}{\mathrm{Set}}
\newcommand{\cMon}{\mathrm{cMon}}
\newcommand{\Hom}{\mathrm{Hom}}
\newcommand{\FPdim}{\mathrm{FPdim}}
\newcommand{\Gal}{\mathrm{Gal}}
\newcommand{\End}{\mathrm{End}}
\renewcommand{\Vec}{\mathrm{Vec}}
\newcommand{\Rep}{\mathrm{Rep}}
\newcommand{\Rex}{\mathrm{Rex}}
\renewcommand{\O}{\mathcal{O}}
\newcommand{\Mod}{\text{-}\mathrm{Mod}}
\newcommand{\Bim}{\text{-}\mathrm{Bim}}
\renewcommand{\lim}{\mathop{\mathrm{lim}}}
\newtheorem*{rep@theorem}{\rep@title}
\newcommand{\newreptheorem}[2]{%
\newenvironment{rep#1}[1]{%
 \def\rep@title{#2 \ref{##1}}%
 \begin{rep@theorem}}%
 {\end{rep@theorem}}}
\newtheorem{theorem}{Theorem}[section]
\newtheorem{proposition}[theorem]{Proposition}
\newtheorem{corollary}[theorem]{Corollary}
\newtheorem{lemma}[theorem]{Lemma}
\theoremstyle{definition}
\newtheorem{definition}[theorem]{Definition}
\newtheorem{example}[theorem]{Example}
\newtheorem{remark}[theorem]{Remark}
\title{Fusion Categories over Non-Algebraically Closed Fields}
\date{}
\author{Sean Sanford\\\itshape\small Department of Mathematics, The Ohio State University}
\begin{document}

\maketitle

\begin{abstract}
    Several complications arise when attempting to work with fusion categories over arbitrary fields.  Here we describe some of the new phenomena that occur when the field is not algebraically closed, and we adapt tools such as the Frobenius-Perron dimension in order to accommodate these new effects.
\end{abstract}

\tableofcontents

\section{Introduction}

Fusion categories are categories that behave like rings.  Their hom-sets are vector spaces over a field, which allows for a direct sum $\oplus$ operation, and they are endowed with a product functor $\otimes$ which is associative and distributes over $\oplus$ up to isomorphism.  They are semisimple, so the tensor product of two simple objects can be written as a sum $X\otimes Y=\bigoplus_ZN_{X,Y}^Z\cdot Z$ of simple objects $Z$ with certain multiplicities $N_{X,Y}^Z$.  Since these multiplicities are nonnegative integers, the theory of fusion categories takes on a distinctly combinatorial flavor, in contrast to other category-theoretic disciplines.

Although the theory of fusion categories is an algebraic discipline, it has strong links to topology and mathematical physics.  Topological quantum field theories (TQFTs) are functors that assign numerical invariants to closed smooth manifolds.  Fusion categories are the algebraic input to the Turaev-Viro state sum construction which produces examples of (2+1)D TQFTs, see \cite{turaevStateSum}.  Fusion categories that are modular and unitary can be used to classify (2+1)D gapped topological phases of matter, see \cite{wenZooQuantumtopologicalPhases2017}.

The goal of this paper is to initiate the extension of the theory of fusion categories to the realm of non\textendash algebraically closed (nAC) fields.  When working over nAC fields, constructions such as the Frobenius-Perron dimension ($\FPdim$) and the Drinfeld center behave differently, and we will give explanations and formulas to make these differences precise.  Schur's lemma establishes that for any simple object $X$, $\End(X)$ must be a division algebra. Without algebraic closure (AC) of the base field, nontrivial division algebras can occur, and this is the root cause of the majority of new phenomena in the nAC setting.

Our first key result is the following

\begin{reptheorem}{Thm:NewRegular}
    Let $\mathcal C$ be fusion over $\mathbb K$ with endomorphism field $\mathbb E=\End(\1_{\mathcal C})$.  Up to a positive scalar factor, the regular element $\s R$ of $\mathcal C$ is given by
    \[\s R\;=\;\sum_{X\in\s O(\mathcal C)}\frac{\FPdim(X)}{\dim_{\mathbb E}\End(X)}\cdot [X]\,.\]
\end{reptheorem}

In analogy with the AC setting, we define $\FPdim(\mathcal C):=\FPdim(\mathcal R)$.  Unlike the classical case, it is not immediately clear whether or not this new $\mathrm{FPdim}$ is an algebraic integer.  Nevertheless, we are able to show that this desirable property remains true in the nAC setting.

\begin{reptheorem}{Thm:FPdimRIsFPdimC}
    For every fusion category $\s C$ over $\mathbb K$, there exists an object $R$ in $\s C$, such that \linebreak $\FPdim(R)\;=\;\FPdim(\s C)$.  In particular, $\FPdim(\s C)$ is necessarily an algebraic integer.
\end{reptheorem}

One important feature is that Galois theory becomes relevant to the theory of fusion categories in the nAC setting.  This occurs because the algebra $\End(\1_{\mathcal C})$ is generally a field extension, of degree say $d_{\mathcal C}$, over the base field.  Because of this, we introduce a notion of \emph{Galois nontriviality} that explains how the simple objects interact with the Galois group.  This new Galois nontriviality causes the Drinfeld center to be smaller, as measured by $\FPdim$, than it would normally be in the AC setting.

\begin{reptheorem}{Thm:FPdimOfZ(C)}
    Suppose both $\mathcal C$ and $\mathcal Z(C)$ are fusion over $\mathbb K$, and let $F:\mathcal Z(\mathcal C)\to\mathcal C$ be the forgetful functor. The Frobenius-Perron dimension of $\s Z(\s C)$ satisfies
    \[\FPdim\big(\s Z(\s C)\big)\;=\;\left(\frac{d_{\s Z(\s C)}}{d_{\s C}}\right)\FPdim\Big(\im(F)\Big)\FPdim(\s C)\leq\FPdim(\mathcal C)^2\,.\]
    Moreover, equality holds if and only if all objects of $\mathcal C$ are Galois trivial. 
\end{reptheorem}

The presence of Galois theory should not be surprising.  It was already observed in \cite[Prop 4.9]{MR2677836} that the Brauer group of the base field is relevant to the extension theory of fusion categories over nAC fields, and in future papers we will explain how the Morita theory of fusion categories in the nAC setting is connected to higher Galois cohomology.

This paper was originally part of the author's PhD thesis \cite{sanfordThesis}.  He would like to thank his advisor Julia Plavnik for her advice and patience, and also Noah Snyder for many useful conversations.  Many thanks are due to David Penneys for his edits and suggestions for the article version of the current draft.  This research was partially supported by the National Science Foundation under Grant No. DMS-2000093.

\section{Preliminaries}

Before launching into new results, it will be necessary to give a sense of the ways in which our categories differ from the classical notion of fusion categories in the AC setting.  We expect that the reader is familiar with the material from \cite[Chapters 1, 2, 3, and 4]{MR3242743}.

\begin{definition}
    A simple object in an additive category $\s C$ is a nonzero object $X$ such that for any object $Y$ and any monomorphism $i:Y\hookrightarrow X$, either $i$ is an isomorphism or $Y=0$.
\end{definition}

\begin{example}
    In the category $\Rep_{\mathbb K}(G)$ of finite dimensional $\mathbb K$-linear representations of a finite group $G$, the simple objects are precisely the irreducible representations.
\end{example}

The following observation about simple objects was first proved in the special case of irreducible representations by Issai Schur, and has since come to be known as Schur's Lemma.  The form that appears below is a broad generalization of the original, though its proof is nearly identical.

\begin{lemma}[Schur's Lemma]\label{Schur'sLemma}
    Let $X$ and $Y$ be simple objects in a $\mathbb K$-linear abelian category.  The vector space $\Hom(X,Y)=0$ unless $X\cong Y$, and $\End(X)$ is a division algebra over $\mathbb K$.
\end{lemma}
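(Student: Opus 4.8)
The plan is to reduce both claims to the single observation that, in a $\mathbb{K}$-linear abelian category, a nonzero morphism between simple objects is necessarily an isomorphism. Granting that, the vanishing of $\Hom(X,Y)$ for $X\not\cong Y$ is immediate, and the division algebra statement falls out by specializing to $Y=X$.

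To prove the key observation, I would start from an arbitrary morphism $f\colon X\to Y$ with $f\neq 0$ and examine its kernel first. Because the ambient category is abelian, $\ker f$ exists and the canonical map $\ker f\hookrightarrow X$ is a monomorphism; simplicity of $X$ then forces this map to be either an isomorphism or to have $\ker f=0$. The first option would make $f=0$ (a morphism whose kernel is all of $X$ is zero), contradicting our assumption, so $\ker f=0$, and hence $f$ is itself a monomorphism by the standard criterion in an abelian category. Now view $f$ as a monomorphism $X\hookrightarrow Y$: simplicity of $Y$ says that $f$ is an isomorphism or $X=0$, and since simple objects are nonzero by definition, $f$ must be an isomorphism. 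For the second part of the lemma, $\mathbb{K}$-linearity of the category makes $\End(X)$ a (unital, nonzero, since $\id_X\neq 0$) $\mathbb{K}$-algebra, and what we just proved, applied with $Y=X$, says exactly that every nonzero element of $\End(X)$ is invertible; that is precisely the definition of a division algebra over $\mathbb{K}$.

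I do not expect a genuine obstacle here — this is the classical argument, essentially Schur's, transported to abelian categories — so the points that merit care are all bookkeeping. One should check that the definition of \emph{simple} used in the excerpt, phrased solely in terms of monomorphisms \emph{into} $X$, is already enough: the proof never needs to inspect quotient objects of $X$, only $\ker f$ together with the mono/iso dichotomy coming from simplicity of $X$ and of $Y$. It is also worth flagging explicitly that \emph{division algebra} is not being assumed finite-dimensional over $\mathbb{K}$ at this stage; finite-dimensionality, and hence the restricted list of possible $\End(X)$, will only enter later once the full fusion hypotheses are imposed.
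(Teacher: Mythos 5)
Your proof is correct and is exactly the classical argument the paper alludes to (the paper omits the proof, noting only that it is "nearly identical" to Schur's original); your observation that the kernel-and-mono dichotomy suffices under the paper's monomorphism-based definition of simplicity is the right point to check. No issues.
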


\begin{example}\label{RepQ8}
    For the quaternion group $Q_8$, the category $\Rep_{\mathbb R}(Q_8)$ has 5 simple objects up to isomorphism.  These correspond to the irreducible representations, four of which are one dimensional and one that is four dimensional.  Lemma \ref{Schur'sLemma} tells us that the spaces of endomorphisms of each irreducible representation are necessarily division algebras over $\mathbb R$.  The one dimensional irreducible representations produce algebras that are isomorphic to $\mathbb R$, while the four dimensional irreducible representation produces the quaternion algebra $\mathbb H$.
\end{example}

\begin{definition}
    Let $\s C$ be a $\mathbb K$-linear category.  A simple object $X$ in $\s C$ that satisfies $\End(X)\cong\mathbb K$ is said to be split, or split-simple.
\end{definition}

\begin{example}
    Over any AC field, the only finite dimensional division algebra is the field itself, so all simple objects are automatically split.
\end{example}

Lemma \ref{Schur'sLemma} and Example \ref{RepQ8} are the jumping off point for this article.  The existence of simple objects that are not split is a phenomenon that is rarely considered in the current literature.  It is common for authors to assume all simples are split (as in \cite{bakalovLectures} and \cite{TAMBARA1998692}), or more restrictively that $\mathbb K$ is algebraically closed (as in \cite{MR3242743}).

The 2-category of all finite $\mathbb K$-linear abelian categories admits a monoidal structure known as the Deligne tensor product $\boxtimes_{\mathbb K}$.  Roughly speaking, there is a na\"ive tensor product $\mathcal C\otimes_{\mathbb K}\mathcal D$ where
\[\Hom_{\mathcal C\otimes_{\mathbb K} \mathcal D}(X\otimes_{\mathbb K} U,Y\otimes_{\mathbb K} V):=\Hom_{\mathcal C}(X,Y)\otimes_{\mathbb K}\Hom_{\mathcal D}(U,V)\,.\]
Unfortunately, this na\"ive tensor product category may not be abelian in general, and so the Deligne product $\mathcal C\boxtimes_{\mathbb K}\mathcal D$ is the formal completion of $\mathcal C\otimes_{\mathbb K}\mathcal D$ under finite colimits (Skip ahead to Definition \ref{Def:DeligneTensor} for a precise definition and relevant universal properties).

\begin{example}\label{Eg:DeligneTensorOfSimplesNotSimple}
    Consider the category $\Vec_{\mathbb C}$ as a $\mathbb R$-linear category.  In order to understand $\s B:=\Vec_{\mathbb C}\boxtimes_{\mathbb R}\Vec_{\mathbb C}$, it is necessary to determine the idempotents in the algebra
    \[\End(\mathbb C)\otimes_{\mathbb R}\End(\mathbb C)\;\cong\;\mathbb C\otimes_{\mathbb R}\mathbb C\,.\]
    A short computation produces two nontrivial idempotents:
    \begin{align*}
        p&=\tfrac12(1\otimes1-i\otimes i)\\
        q&=\tfrac12(1\otimes1+i\otimes i)\,.
    \end{align*}
    It follows that the simple objects of $\s B$ are $(\mathbb C,\mathbb C,p)$ and $(\mathbb C,\mathbb C,q)$ (see Definition \ref{Def:DeligneTensor} for notation conventions).
\end{example}

Whenever $X$ and $Y$ are split simple objects, it follows that $X\boxtimes_{\mathbb K}Y$ is a split simple object in the Deligne product category.  Thus Example \ref{Eg:DeligneTensorOfSimplesNotSimple} is a significant departure from the behavior of Deligne products in the AC setting.  When working over $\mathbb K=\mathbb R$, the following example gives a complete description of the ways in which Deligne products of simple objects can decompose.

\begin{example}\label{Eg:RealDeligneTensors}
    A famous theorem of Frobenius \cite{frobeniusUberLineare} states that the only finite dimensional division algebras over $\mathbb R$ are $\mathbb R$ itself, $\mathbb C$, and $\mathbb H$ (the quaternion algebra).  Thus for any simple object $X$ in an $\mathbb R$-linear abelian category, let us say that $X$ is real, complex, or quaternionic whenever $\End(X)$ is isomorphic to $\mathbb R$, $\mathbb C$, or $\mathbb H$ respectively.  The tensor products of these algebras over $\mathbb R$ are shown in the chart below.
    \begin{equation*}
        \begin{array}{c|c|c|c|}
            \downarrow A\otimes_{\mathbb R}B \rightarrow & \mathbb R & \mathbb C & \mathbb H \\\hline
            \mathbb R & \mathbb R & \mathbb C & \mathbb H  \\\hline
            \mathbb C & \mathbb C & \mathbb C\oplus\mathbb C & M_2(\mathbb C) \\\hline
            \mathbb H & \mathbb H & M_2(\mathbb C) & M_4(\mathbb R)  \\\hline
        \end{array}
    \end{equation*}

    From the above table, we can deduce that when both $X$ and $Y$ are complex, $X\boxtimes Y$ decomposes into two non-isomorphic complex simple objects (cf. Example \ref{Eg:DeligneTensorOfSimplesNotSimple}).  When both $X$ and $Y$ are quaternionic, $X\boxtimes Y$ decomposes into four isomorphic copies of a unique real simple object.  Finally when $X$ is complex and $Y$ is quaternionic, $X\boxtimes Y$ decomposes into two isomorphic copies of a unique complex simple object.
\end{example}

\begin{definition}\label{Def:mFus}
    A multifusion category over $\mathbb K$ is a rigid monoidal category $\s C$ that is separable (and hence abelian, see Definition \ref{Def:Separable}) over $\mathbb K$ and whose product $\otimes$ is $\mathbb K$-linear in both arguments.  If in addition $\1$ is simple, we say that $\s C$ is a fusion category.
\end{definition}

\begin{example}
    The category $\Rep_{\mathbb K}(G)$ is a fusion category whenever $\char(\mathbb K)\nmid |G|$
\end{example}

\begin{example}
    Let $\s M_n(\Vec_{\mathbb K})$ be the semisimple category generated by simple objects $E_{i,j}$, with $\End(E_{i,j})\cong\mathbb K$ for all $i,j\in\{1,\cdots,n\}$.  The tensor product is given by
    \[E_{i,j}\otimes E_{k,\ell}:\;=\;\begin{cases}
    0 & \text{ if }j\neq k\\
    E_{i,\ell} & \text{ if }j=k
    \end{cases}\,.\]
    The associators and unitors are identities.  This category is known as the category of $n\times n$ matrices over $\Vec_{\mathbb K}$.  From the definition of the tensor product, it is easy to see that
    \[\1\;=\;\bigoplus_{i=1}^nE_{i,i}\,\]
    which is not simple.  This category is the prototypical example of a multifusion category.
\end{example}

\begin{example}\label{VecGIsFusion}
    The category $\Vec_{\mathbb K}(G)$ of (finite dimensional) $G$-graded $\mathbb K$-vector spaces admits a monoidal structure, where
    \[(V\otimes W)_g:=\bigoplus_{hk=g}V_h\otimes_{\mathbb K}W_k\,.\]
    This product is $\mathbb K$-linear and exact in both variables.  The unit in this category is the vector space where $\1_1=\mathbb K$ and $\1_g=0$ for any other $g\in G$.  The object $\1$ is simple for dimension reasons.  This category is equivalent to $A\Mod$, where $A=\Hom_{\Set}(G,\mathbb K)$ in $\Vec_{\mathbb K}$, which is separable.  Thus $\Vec_{\mathbb K}(G)$ is a fusion category.
\end{example}

\begin{example}\label{Eg:CCbim}
    Regard $\mathbb C$ as an algebra object in the category $\Vec_{\mathbb R}$.  Then the category \linebreak $(\mathbb C,\mathbb C)\Bim$ of bimodules for $\mathbb C$ is fusion over $\mathbb R$.  It contains two simple objects, the trivial bimodule $\1$ and the conjugating bimodule $C$.  As a left $\mathbb C$-vector space, both objects are isomorphic to $\mathbb C$.  The left and right actions of $\mathbb C$ on $\1$ are the same, while the left and right actions on $C$ differ by complex conjugation.
\end{example}

\begin{example}\label{Eg:FusionDependsOnField}
    Whether or not a category is fusion generally depends on what base field is being used.  Consider an inseparable extension $\mathbb L/\mathbb K$.  The category $\Vec_{\mathbb L}$ is fusion as a category over $\mathbb L$, but not as a category over $\mathbb K$.  This is due to the fact that $\mathbb L$ is not separable as a $\mathbb K$-algebra.
\end{example}

It is important at this point to emphasize the ways in which our Definition \ref{Def:mFus} differs from the classical definition.  As Example \ref{Eg:FusionDependsOnField} points out, fusion is always with respect to some background field.  When multiple fields are involved in a scenario, it should always be stressed which field the categories are fusion with respect to.

The classical definition of multifusion relies on semisimplicity as opposed to separability.  The motivation for the new definition comes from Example \ref{Eg:SemXSem=NonSem}.  Since all semisimple algebras over perfect fields (e.g. algebraically closed fields) are separable, the difference in our choice of generalization only appears when working over non-perfect fields.  Non-perfect fields are necessarily infinite and of positive characteristic, such as $\mathbb F_p(x)$.

In the AC setting, there is no difference between the statement $\End(\1)\cong\mathbb K$ and the statement that $\1$ is simple.  We choose to keep the statement that $\1$ is simple, because this allows for more interesting examples.  In light of this choice, the Eckmann-Hilton argument and Lemma \ref{Schur'sLemma} combine to imply that $\End(\1)$ must be a finite degree field extension of $\mathbb K$.  We have already seen this happen in Example \ref{Eg:CCbim}, and we wish to include categories like this.  One unavoidable consequence of this choice is that the tensor product of fusion categories is typically multifusion and not fusion, because when the unit is not split, $\1\boxtimes_{\mathbb K}\1$ typically has more than one simple summand (see Definition \ref{Def:DeligneTensor} for details).

Since we will allow $\End(\1)$ to be a nontrivial field extension of $\mathbb K$, this field will play an important role in the nAC setting.  For this reason, we establish some new language to refer to $\End(\1)$ and its properties.

\begin{definition}\label{EndoField&Degree}
    To any fusion category $\s C$ over $\mathbb K$, the endomorphism field (of $\s C$) is the field
    \[\mathbb E\;:=\;\mathbb E(\s C)\;=\;\End(\1)\,.\]
    Elements of the endomorphism field behave similarly to scalars, but behave differently in a few key ways, and for this reason will sometimes be referred to as \emph{pseudo-scalars}.  The endomorphism degree, or just degree for short, is the number
    \[d_{\s C}:=[\mathbb E:\mathbb K]\]
\end{definition}

\begin{remark}
    It should be stressed that often a fusion category $\s C$ over $\mathbb K$ is not fusion over its endomorphism field $\mathbb E$.  The failure to be fusion has to do with the requirement that $\otimes$ be bilinear over the chosen base field, as the following example illustrates.
\end{remark}

\begin{example}\label{Eg:CCbimContd}
    Let $\mathcal C=(\mathbb C,\mathbb C)\Bim$ be the category from Example \ref{Eg:CCbim}.  This category has degree $d_{\mathcal C}=2$ as a fusion category over $\mathbb R$, because $\mathbb E=\mathbb C$.  Although both the simple objects have $\mathbb C$ as their endomorphism algebra, this category is not fusion over $\mathbb C$, because the monoidal product is not $\mathbb C$-bilinear on morphisms.  This can be seen by observing the fact that
    \[(\lambda\cdot\id_C)\otimes_{\mathbb C}(\nu\cdot\id_C)\;=\;\lambda\overline{\nu}\cdot(\id_C\otimes_{\mathbb C}\id_C)\,.\]
\end{example}

This phenomenon of being `almost linear' over $\mathbb E(\s C)$ but failing due to Galois conjugation is something that is new in the nAC case, and it has significant consequences.  In order to describe it properly, we will need a few definitions.

\pagebreak

\begin{definition}\label{Def:LREmbeddings}
    Let $X$ be an object in a fusion category, and let $e:\1\to\1$ be an element of $\mathbb E(\s C)$.  There are two canonical embeddings $\lambda_X,\rho_X:\End(\1)\hookrightarrow\End(X)$ defined by the following diagrams.
    \[\begin{tikzcd}
    	& X & X \\
    	{\1\otimes X} &&& X\otimes\1 \\
    	{\1\otimes X} &&& X\otimes\1 \\
    	& X & X
    	\arrow["{\lambda_X(e)}"', from=1-2, to=4-2]
    	\arrow["{\rho_X(e)}", from=1-3, to=4-3]
    	\arrow["{\ell_X^{-1}}"', from=1-2, to=2-1]
    	\arrow["{e\otimes\id_X}"', from=2-1, to=3-1]
    	\arrow["{\ell_X}"', from=3-1, to=4-2]
    	\arrow["{r_X^{-1}}", from=1-3, to=2-4]
    	\arrow["{\id_X\otimes e}", from=2-4, to=3-4]
    	\arrow["{r_X}", from=3-4, to=4-3]
    \end{tikzcd}\]
    We will refer to $\lambda_X$ and $\rho_X$ as the left and right embeddings respectively.
\end{definition}

\begin{definition}\label{Def:GaloisNontrivial}
    An object $X$ in a fusion category over $\mathbb K$ is called Galois trivial if $\lambda_X(e)=\rho_X(e)$ for all $e\in\mathbb E(\s C)$, and $X$ is called Galois nontrivial otherwise.
\end{definition}

\begin{remark}
    The existence of Galois nontrivial objects is only possible when $d_{\s C}>1$, because otherwise $\mathbb E=\mathbb K$ and $\mathbb K$-bilinearity allows all scalars to be factored out.  In general, if all objects are Galois trivial, then the category is fusion over $\mathbb E$.
\end{remark}

The terminology Galois trivial is meant to indicate that the given object does not interact with the field extension $\mathbb E(\mathcal C)/\mathbb K$.  When an object $X$ is Galois nontrivial, this means that passing pseudo-scalars across the object $X$ has a nontrivial interaction with the Galois group $\Gal(\mathbb E/\mathbb K)$, when $\mathbb E/\mathbb K$ is Galois.  In the event that $\mathbb E/\mathbb K$ is not normal, Galois nontriviality of $X$ indicates that there is a nontrivial interaction with the Galois group of the Galois closure instead.  The following two examples expand on these two cases respectively.

\begin{example}
    It turns out that Example \ref{Eg:CCbim} generalizes to any Galois extension.  Let $\mathbb L/\mathbb K$ be a Galois extension of degree $n$, and regard $\mathbb L$ as an algebra in $\Vec_{\mathbb K}$.  The category $(\mathbb L,\mathbb L)\Bim$ has exactly $n$ simple objects.  It can be shown that for each element $g\in\Gal(\mathbb L/\mathbb K)$, there is a unique simple object $\mathbb L_g$.  This simple object is determined by the condition that, for every $e\in\End(\1)\cong\mathbb L$,
    \[\rho_{\mathbb L_g}(e)\;=\;\lambda_{\mathbb L_g}\big(g(e)\big)\,.\]
    Thus when $g\neq1$, the object $\mathbb L_g$ is Galois nontrivial by Definition \ref{Def:GaloisNontrivial}.
    By taking the tensor product of bimodules and comparing the left and right embeddings, one quickly arrives at the following formula for the tensor product
    \[\mathbb L_g\otimes_{\mathbb L}\mathbb L_h\cong\mathbb L_{gh}\,.\]
    From this it follows that all simple objects are invertible, and have endomorphism algebras isomorphic to $\mathbb E(\s C)=\mathbb L$.
\end{example}

So far we have seen categories where Galois nontriviality amounts to the left and right embeddings differing by a Galois transformation.  This makes it so that pseudo-scalars can be passed from right to left across the symbol $\otimes$ by applying elements of the Galois group.  In this next example we will see an object with the even less convenient property $\im(\lambda_X)\neq\im(\rho_X)$.  When an object is Galois nontrivial in this way, there are pseudo-scalars that cannot be passed across the symbol $\otimes$ at all.

\begin{example}\label{NonNormalBim}
    Let $\mathbb J:=\mathbb Q(\sqrt[3]{2}\,)$. The field extension $\mathbb J/\mathbb Q$ is separable, but not normal.  By using $\omega$ and $\omega^2$ as the nontrivial third roots of unity, the other roots of $x^3-2$ are $\omega\sqrt[3]{2}$ and $\omega^2\sqrt[3]{2}$.  The category $(\mathbb J,\mathbb J)\Bim$ has two simple objects $\1:=\mathbb J$ and the splitting field $X:=\mathbb J(\omega)$.  As in the previous examples, the left (resp. right) embeddings come from the left(right) module actions.  These actions are determined for $X$ by the equations
    \begin{gather*}
        \lambda_X\left(\sqrt[3]{2}\,\right)=\omega\sqrt[3]{2}\hspace{3mm}\text{ and }\hspace{3mm}
        \rho_X\left(\sqrt[3]{2}\,\right)=\omega^2\sqrt[3]{2}\,,
    \end{gather*}
    and hence this object $X$ is Galois nontrivial by Definition \ref{Def:GaloisNontrivial}.
    Graphically, these formulas show that the act of passing the pseudo-scalar $\sqrt[3]{2}$ from right to left across $X$ results in a `residue' of $\omega$ that remains on the $X$ strand.
\end{example}

\section{Frobenius-Perron Dimensions}

    The algebraic objects that underlie the combinatorics of fusion categories are known as $\mathbb Z_+$-rings.  We partially follow the treatment given in \cite[Ch 3]{MR3242743}, though we are forced to make certain changes to the definition of a fusion ring.  These changes stem from the need to keep track of the dimensions of various division rings that appear in the nAC setting.  Since we are already making changes, we take this opportunity to recast the combinatorics of fusion categories in terms of a fundamental combinatorial object: subsets with multiplicity, also known as multisubsets.

    Consider the functor $\mathbb N:\Set\to\cMon$ that assigns to every set $S$, the free commutative monoid $\mathbb NS$ generated by $S$.  The elements of $\mathbb NS$ are finite formal $\mathbb N$-linear combinations of the elements in $S$.  We will denote by $\mathbb N\Set$ the full image of $\mathbb N$.  Note that a typical element of $\mathbb NS$ is of the form $a=\sum_{s\in S}n_ss$, and these are in bijection with finite multisubsets of $S$.  In the multisubset context, the coefficient $n_s\in\mathbb N$ is referred to as the multiplicity of $s$ in the multisubset $a$.

    \begin{definition}
        Every object $\mathbb NS$ in $\mathbb N\Set$ admits a partial ordering.  We say that $a\leq b$ if there exists some $c\in \mathbb NS$ such that $a+c=b$.  If $c\neq0$ and $a+c=b$, then we write $a<b$.  The intersection of two elements $a=\sum a_ss$ and $b=\sum b_ss$ is the element $a\cap b:=\sum \min(a_s,b_s)s$.  An element $b\in\mathbb NS$ is said to be simple if $0<a\leq b$ implies $a=b$.  Note that the simple elements of $\mathbb NS$ are the singleton multisubsets $\{s\}=1s$ for $s\in S$.  If we identify $S$ with this collection of singleton multisubsets, then the simple elements are precisely the elements of $S\subset\mathbb NS$.
    \end{definition}

    The monoidal structure from $\Set$ induces a monoidal structure on $\mathbb N\Set$: $\mathbb NS\otimes\mathbb N T=\mathbb N(S\times T)$.  It is immediate that $\mathbb N\;=\;\mathbb N\{*\}$ is the monoidal unit, and that $\otimes=\otimes_{\mathbb N}$, the relative tensor product over the natural numbers.

    \begin{definition}
        For an algebra object $A$ in $\mathbb N\Set$, we establish the following notation
        \begin{itemize}
            \item $\O\;=\;\O(A)$ is the set of simple elements of $A$,
            \item $\rk(A)=|\mathcal O(A)|$ is the rank of $A$,
            \item $\O_0\;=\;\O_0(A)\;=\;\{b\in\O(A)\;|\;b\leq 1\}$,
            \item $ab=\sum_{c\in\O}N_{a,b}^cc$, these $N_{a,b}^c$ are called the fusion coefficients of $A$.
        \end{itemize}
    \end{definition}

    The following two propositions are general facts about algebra objects in $\mathbb N\Set$.

    \begin{proposition}[{\cite[cf. Prop 3.1.4]{MR3242743}}]\label{Prop:1LooksLikeThis}
        For any algebra $A$ in $\mathbb N\Set$, any elements $a,b\in\mathcal O_0$ (summands of $1$) satisfy the relation $ab=\delta_{a,b}a$.  Furthermore, it follows that $1\;=\;\sum_{b\in\O_0}b$.
    \end{proposition}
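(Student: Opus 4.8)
The plan is to reduce everything to one principle: addition in $\mathbb N\Set$ admits no cancellation, so if $x+y=z$ in some $\mathbb NS$ with $z$ simple, then $\{x,y\}=\{0,z\}$; more generally a sum of nonnegative terms equal to a simple element has exactly one nonzero term, which equals that element. Write $1=\sum_{b\in\O}m_b\,b$ in $A$, so that by definition $\O_0=\{b\in\O:m_b\geq 1\}$. I will first prove the ``furthermore'' clause, namely that all $m_b\in\{0,1\}$, and then deduce $ab=\delta_{a,b}a$. This is essentially the classical argument, the only extra care being that multiplicities could a priori exceed $1$.

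For the multiplicities, fix a simple element $c$ and use that $1$ is the unit: $c=1\cdot c=\sum_{b\in\O}m_b(bc)$. Every term is nonnegative and the sum is the simple element $c$, so each $m_b(bc)\leq c$, hence $m_b(bc)\in\{0,c\}$, and exactly one term is nonzero. If some $m_b\geq 2$, I would argue that $bc=0$ for that $b$: the only alternative, $m_b(bc)=c$, would force $bc=c$ and then $m_b(bc)=m_b\,c\neq c$, a contradiction. Since $c$ was arbitrary, such a $b$ would annihilate every simple element, whence $b\cdot 1=\sum_{b'}m_{b'}(bb')=0$, contradicting $b\cdot 1=b\neq 0$. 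Therefore $m_b\leq 1$ for all $b$, i.e.\ $1=\sum_{b\in\O_0}b$.

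For the relation, let $a,b\in\O_0$. Multiplication in $A$ is $\mathbb N$-linear in each variable, hence monotone for $\leq$, so from $b\leq 1$ we get $ab\leq a\cdot 1=a$, and since $a$ is simple, $ab\in\{0,a\}$; symmetrically $ab\leq 1\cdot b=b$ gives $ab\in\{0,b\}$. Now I would expand $a=a\cdot 1=\sum_{c\in\O_0}ac$: by the previous sentence each summand lies in $\{0,c\}$, and the total is the simple element $a$, so exactly one $c\in\O_0$ has $ac\neq 0$, and that summand must equal $a$; but $ac\in\{0,c\}$ forces $ac=c$, hence $c=a$. This shows simultaneously that $a^2=a$ and that $ac=0$ for every $c\in\O_0\setminus\{a\}$, which is precisely the assertion $ab=\delta_{a,b}\,a$.

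I expect the only genuine obstacle to be the diagonal case $a=b$. The crude bounds $ab\leq a$ and $ab\leq b$ alone already force $ab=0$ when $a\neq b$, but they are vacuous when $a=b$. The resolution is to feed the one-sided refinement $ac\in\{0,c\}$ back into the unit equation $a\cdot 1=a$: this pins the unique surviving summand to be the $c=a$ term, so that $a$ is idempotent. Everything else is routine bookkeeping with the no-cancellation property of $\mathbb N\Set$.
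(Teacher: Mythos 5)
Your proof is correct, and it reaches both conclusions by the same basic mechanism as the paper --- expanding a product against the decomposition of $1$ and exploiting the fact that all coefficients live in $\mathbb N$ --- but the two arguments are organized quite differently. The paper first establishes $ab=\delta_{a,b}a$ by writing $a=a1$ in terms of fusion coefficients, matching coefficients to get $\delta_{a,c}=\sum_{b\in\O_0}n_bN_{a,b}^c$, and extracting functions $f,g:\O\to\O_0$ that it then pins down on $\O_0$; only afterwards does it deduce $n_a=1$ from the squaring identity $1=1^2$ together with the already-proved relation. You invert this order: you first rule out multiplicities $m_b\geq2$ by an annihilation contradiction ($bc=0$ for every simple $c$ would force $b=b\cdot1=0$), which gives $1=\sum_{b\in\O_0}b$ directly, and then you obtain the product relation from the order-theoretic bounds $ab\leq a\cdot 1=a$ and $ab\leq 1\cdot b=b$ plus one further expansion of $a=a\cdot 1$ to settle the diagonal. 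Your route avoids explicit bookkeeping with the $N_{a,b}^c$ and the auxiliary functions $f$ and $g$, and it dispatches the off-diagonal case $a\neq b$ essentially for free from $\{0,a\}\cap\{0,b\}=\{0\}$; the paper's route yields the slightly stronger intermediate fact $ab=\delta_{b,f(a)}a$ for \emph{arbitrary} simple $a$ and $b\in\O_0$, which describes right multiplication by unit summands on all of $\O$. One small phrasing quibble: in your multiplicity step, $m_b(bc)=c$ with $m_b\geq2$ is already impossible outright, since every coefficient of $m_b(bc)$ is divisible by $m_b$ while $c$ has a coefficient equal to $1$; the detour through ``forces $bc=c$'' is harmless but unnecessary.
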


    \begin{proof}
        In general, we know that $1$ decomposes like $1\;=\;\sum_{b\in\s O_0}n_bb$, where all of the $n_b\geq1$.  This implies that for any $a\in\s O$,
        \begin{gather*}
            a=a1
            =a\left(\sum_{b\in\s O_0}n_bb\right)
            =\sum_{b\in\s O_0}n_bab\\
            =\sum_{\substack{{b\in\s O_0}\\{c\in\s O}}}n_bN_{a,b}^cc
            =\sum_{c\in\s O}\left(\sum_{b\in\s O_0}n_bN_{a,b}^c\right)c\,.
        \end{gather*}
        By equating coefficients, the above implies that $\delta_{a,c}=\sum_{b\in\s O_0}n_bN_{a,b}^c$.  All of the $N_{a,b}^c$ are natural numbers, so the only way for this last equation to be satisfied is if to each $a\in\s O$ there corresponds a unique $f(a)\in\s O_0$ that satisfies $N_{a,b}^c=\delta_{a,c}\delta_{b,f(a)}$.  By examining the decomposition of $1a$, the above argument can be repeated to reveal the existence of another function $g:\s O\to\s O_0$, where $N_{d,a}^c=\delta_{a,c}\delta_{d,g(a)}$.
        
        These observations combine to show that when $a\in\s O_0$,
        \[1\;=\;N_{a,f(a)}^a\;=\;\delta_{f(a),a}\delta_{a,g(f(a))}\,,\]
        which implies $f(a)=a=g(a)$.  In other words, $ab=\delta_{a,b}a$ for any $a,b\in\s O_0$, so the first claim is established.  With this new formula in hand, we can compute
        \begin{gather*}
            \sum_{b\in\s O_0}n_bb=1
            =1^2
            =\left(\sum_{b\in\s O_0}n_bb\right)^2\\
            =\sum_{a,b\in\s O_0}n_an_bab
            =\sum_{a,b\in\s O_0}n_an_b\delta_{a,b}a
            =\sum_{a,b\in\s O_0}n_a^2a\,.
        \end{gather*}
        Thus $n_a=n_a^2$ for every $a\in\s O_0$.  Since each $n_a\geq1$, it must be the case that $n_a=1$ for every $a$.
    \end{proof}

    \begin{proposition}\label{TechnicalProjLemma}
        If $p\in A$ is a idempotent, i.e. $p^2=p$, such that $1\leq p$, then $p=1$.
    \end{proposition}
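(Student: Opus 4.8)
The plan is to exploit the absence of additive inverses in $\mathbb NS$. First I would unpack the hypothesis $1\leq p$: by the definition of the partial order on $A=\mathbb NS$, it asserts the existence of an element $q\in A$ with $p=1+q$. It then suffices to prove $q=0$.

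Next I would expand the square, using distributivity and the fact that $1$ is a two-sided unit (no commutativity of the product is required):
\[p^2=(1+q)(1+q)=1+q+q+q^2=1+2q+q^2\,.\]
The idempotence hypothesis $p^2=p$ then reads $1+2q+q^2=1+q$. Since $\mathbb NS$ embeds into the free abelian group $\mathbb ZS$ it is cancellative, so I may cancel $1$ and then one copy of $q$ from each side to obtain $q+q^2=0$.

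Finally, I would observe that every element of $\mathbb NS$ has nonnegative integer coefficients, so a sum of two such elements can vanish only if each summand does; in particular $q=0$, whence $p=1$, as claimed.

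The computation itself is routine; the one point that genuinely must be invoked is the feature distinguishing $\mathbb NS$ from a ring, namely that $x+y=0$ forces $x=y=0$. This is exactly what rules out the familiar ring-theoretic counterexamples, such as $p=1-e$ for a nontrivial idempotent $e$, that would otherwise obstruct the statement.
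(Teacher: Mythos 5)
Your proof is correct, and its engine is the same as the one driving the conclusion of the paper's argument: write $p=1+q$, expand $p^2$ by bilinearity and unitality, cancel (valid because the free commutative monoid $\mathbb NS$ embeds in $\mathbb ZS$), and conclude from $q+q^2=0$ and nonnegativity of coefficients that $q=0$. The difference is that the paper first spends a paragraph using Proposition \ref{Prop:1LooksLikeThis} to show that the coefficient $n_b$ of each $b\in\mathcal O_0$ in $p$ equals $1$, and only then writes $p=1+C$ with $C$ supported off $\mathcal O_0$; you instead obtain a decomposition $p=1+q$ immediately from the definition of the partial order $1\leq p$, and your cancellation step never uses any information about the support of $q$. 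So your route is a genuine streamlining: the paper's preliminary coefficient analysis is not needed for this statement, and your version isolates the one essential structural input, namely that $x+y=0$ in $\mathbb NS$ forces $x=y=0$, which is also what blocks the ring-theoretic counterexamples you mention.
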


    \begin{proof}
        Let $p=\sum_{a\in\s O}n_aa$.  The projection property shows us that
        \begin{gather*}
            \sum_{c\in\s O}n_cc\;=\;p\;=\;p^2\;=\;\sum_{a,b\in\s O}n_an_bN_{a,b}^cc\,.
        \end{gather*}
        By comparing coefficients, we arrive at the formula
        \begin{gather}
            n_c\;=\;\sum_{a,b\in\s O}n_an_bN_{a,b}^c\,.\label{Eqn:ProjCoeff}
        \end{gather}
        If $b\in\s O_0$, Proposition \ref{Prop:1LooksLikeThis} tells us that $b^2=b$.  Next, set $c=b\in\mathcal O_0$ in Equation \ref{Eqn:ProjCoeff}, and then isolate the $a=b$ term to find that
        \[n_b\;=\;n_b^2+B\;\geq\;n_b^2\;\geq\;n_b\,,\]
        where $B$ is simply the sum of all the other terms from Equation \ref{Eqn:ProjCoeff}.  If this $B$ is nonzero, then $n_b>n_b$, a contradiction.  Thus $n_b=n_b^2$ and this forces $n_b$ to be either $0$ or $1$.  Since we have assumed that $1\leq p$, we can use Proposition \ref{Prop:1LooksLikeThis} to deduce that $n_b=1$ for all $b\in\s O_0$.  Thus we have the following characterization
        \[p\;=\;1+\sum_{a\in\s O\setminus\s O_0}n_aa\;=\;1+C\,.\]
        Now we can observe that
        \[1+C=p=p^2=1+2C+C^2\,.\]
        By subtracting like terms, we find that
        \[0=C+C^2\,,\]
        so $C=0$ and hence $p=1$.
    \end{proof}

    We now proceed to analyze those $\mathbb N\Set$ algebras that arise from fusion categories, and abstract their properties.  For any simple object $X$ in a semisimple category $\mathcal C$, we denote its isomorphism class by $[X]$, and we write $\O(\mathcal C)$ for the set of isomorphism classes of simple objects.  
    
    \begin{definition}\label{Def:NO(C)}
        The free commutative monoid of isomorphism classes of objects of a semisimple category $\mathcal C$ is the object $\S(\mathcal C)$ in $\mathbb N\Set$.  We define a map $\ob(\mathcal C)\to\S(\mathcal C)$ by the formula $X\mapsto[X]$ for simple objects, and we extend to all objects by the formula $[X\oplus Y]:=[X]+[Y]$ (this uses semisimplicity).

        If $\mathcal C$ is (multi)fusion, then we call $\S(\mathcal C)$ the (multi)fusion semiring of $\mathcal C$, because it admits an algebra structure $\S(\mathcal C)\otimes\S(\mathcal C)\to\S(\mathcal C)$ induced by the tensor product in $\mathcal C$
        
        \[[X]\cdot[Y]:=[X\otimes Y]\,.\]

        We also define an involution $x\mapsto\overline{x}$ by the formula $[X]\mapsto[X^*]$.  Semisimplicity combines with duality in fusion categories to imply that $(X^*)^*\cong X$ for every object $X$, and this justifies the claim that this is an involution.
    \end{definition}

    \begin{remark}
        Given a functor $F:\mathcal C\to\mathcal D$, we can define a morphism $\S(F):\S(\mathcal C)\to\S(\mathcal D)$ by the formula $\S(F)\big([X]\big):=\big[F(X)\big]$.  This allows us to think of $\S$ as a functor from the category $\text{ssCat}$ of semisimple categories and natural isomorphism classes of functors to the category $\mathbb N\Set$.

        Note that this construction is not monoidal, since $\S(\mathcal C)\otimes\S(\mathcal D)$ has simple elements $\O(\mathcal C)\times\O(\mathcal D)$, while the simple elements of $\S(\mathcal C\boxtimes\mathcal D)$ correspond to the simple summands of $[X\boxtimes Y]$, for $X$, $Y$ simple, see Example \ref{Eg:RealDeligneTensors}.  Despite this, there is a comparison map $\S(\mathcal C)\otimes\S(\mathcal D)\to\S(\mathcal C\boxtimes\mathcal D)$ that makes the functor $\S$ lax monoidal.
    \end{remark}
    
    \begin{definition}
        An (abstract) multifusion semiring is an algebra object $A$ in the category $\mathbb N\Set$ with $\rk(A)<\infty$, together with an involution $a\mapsto\overline{a}$ such that for any simple elements $a,b\in A$, the following are equivalent
        \begin{enumerate}
            \item there exists a unique simple element $c\in A$ such that $c\leq1\cap ab$,
            \item there exists a unique simple element $d\in A$ such that $d\leq1\cap ba$,
            \item $b=\overline{a}$.
        \end{enumerate}
        When $1$ is simple, $A$ is said to be a fusion semiring.  When $1$ is not simple, $A$ is said to be strictly multi(fusion).
    \end{definition}

    \begin{example}
        When $\mathcal C$ is (multi)fusion, the $\mathbb N\Set$ algebra $\S(\mathcal C)$ is a (multi)fusion semiring.  This justifies the terminology established in Definition \ref{Def:NO(C)}.
    \end{example}

    \begin{remark}
        Our definition is similar to the definition of a based ring given in \cite[Def 3.1.3]{MR3242743}, though we allow for multiple copies of summands of 1 inside of $a\overline{a}$ when $a$ is simple.  When $A=\S(\mathcal C)$, this has the effect of allowing for $\End(X)$ to have dimension greater than one, which we need for the case when $X$ is a nonsplit simple object.
    \end{remark}

\begin{definition}
    An $\mathbb N\Set$ algebra is said to be \emph{transitive} if for any simple elements $x$ and $y$, there exist simple elements $u$ and $v$ such that $y\leq ux$ and $y\leq xv$.
\end{definition}

The next few propositions can be found in Chapter 3 of \cite{MR3242743}.  We will follow their development, and in the locations where nontrivial adaptations are required, we will give new proofs.

\begin{proposition}[{\cite[cf. Exercise 3.3.2]{MR3242743}}]\label{FusSemTrans}
    For $\mathcal C$ fusion, the fusion semiring $\S(\mathcal C)$ is transitive.
\end{proposition}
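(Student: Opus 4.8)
The plan is to prove the equivalent statement at the level of the category: for any two simple objects $X$ and $Y$ of $\mathcal C$, there exist simple objects $U$ and $V$ with $[Y]\leq[U][X]$ and $[Y]\leq[X][V]$ in $\S(\mathcal C)$. The only structural features of $\mathcal C$ that enter are rigidity and semisimplicity; notably, the nontrivial division algebras $\End(X)$ that complicate the nAC theory are irrelevant here, since transitivity concerns only the order relation $\leq$ and involves no dimension count.

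The first step is to record that $1\leq[X][X^*]$ and $1\leq[X^*][X]$ for every nonzero object $X$, where $1=[\1]$ denotes the unit of $\S(\mathcal C)$. Rigidity supplies $\coev_X\colon\1\to X\otimes X^*$ and $\ev_X\colon X^*\otimes X\to\1$, and both are nonzero, since the zig-zag identities would otherwise force $\id_X=0$ and hence $X=0$. Decomposing $X\otimes X^*$ into a sum of simple objects and using Lemma \ref{Schur'sLemma}, a nonzero morphism from the simple object $\1$ into a simple object $W$ forces $W\cong\1$; thus the nonzero map $\coev_X$ has a nonzero component into a summand isomorphic to $\1$, so $\1$ occurs as a summand of $X\otimes X^*$, i.e. $1\leq[X][X^*]$. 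The same reasoning applied to $\ev_X$ gives $1\leq[X^*][X]$. (Alternatively, since $\S(\mathcal C)$ is a fusion semiring and $1$ is simple, taking $b=\overline a$ in its defining conditions forces $1\leq a\overline a$ and $1\leq\overline a a$ directly.)

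The second step extracts the simple objects. Fix simple $X,Y$. Multiplying $1\leq[X^*][X]$ on the left by $[Y]$ and expanding the product $[Y][X^*]$ into simples gives
\[[Y]\;=\;[Y]\cdot 1\;\leq\;[Y][X^*][X]\;=\;\sum_{U}N_{Y,X^*}^{\,U}\,[U][X]\,,\]
where the sum runs over simple elements $U$ and the $N_{Y,X^*}^{\,U}$ are nonnegative integers. Comparing the multiplicity of the simple element $[Y]$ on the two sides yields $1\leq\sum_{U}N_{Y,X^*}^{\,U}\,N_{U,X}^{\,Y}$, so some term is nonzero; for the corresponding simple object $U$ we have $N_{U,X}^{\,Y}\geq1$, that is, $[Y]\leq[U][X]$. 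Symmetrically, multiplying $1\leq[X][X^*]$ on the right by $[Y]$ and expanding $[X^*][Y]$ produces a simple $V$ with $N_{X,V}^{\,Y}\geq1$, i.e. $[Y]\leq[X][V]$. This establishes transitivity of $\S(\mathcal C)$.

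I do not expect a genuine obstacle: the argument is the algebraically closed proof transcribed with almost no change. The one place that wants a little care in the nAC setting is the claim that $\1$ is a direct summand of $X\otimes X^*$ --- because $\coev_X$ is a morphism \emph{out} of $\1$ one cannot appeal directly to the definition of a simple object, and must instead combine semisimplicity with Schur's Lemma (noting $\Hom(\1,W)=0$ for simple $W\not\cong\1$). Everything after that is bookkeeping with nonnegative integers.
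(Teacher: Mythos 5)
Your proof is correct, and it is essentially the argument the paper intends: the paper itself gives no proof of this proposition, deferring to the cited exercise in \cite{etingof2015tensor}, and your write-up is precisely that standard argument, with the one genuinely nAC-sensitive point (that $\1$ is a summand of $X\otimes X^*$, via Schur's Lemma rather than via algebraic closure) handled correctly.
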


\begin{theorem}[Frobenius-Perron, e.g. \cite{MR3242743} Thm 3.2.1]\label{FPClassic}
    Any $n\times n$ matrix $M$ over $\mathbb R$ with non-negative entries has a non-negative eigenvalue $\lambda_{FP}$, which is larger in absolute value than all other eigenvalues of $M$.  If $M$ has an eigenvector with strictly positive entries, then the corresponding eigenvalue is $\lambda_{FP}$.
    
    If all entries of $M$ are strictly positive, then $\lambda_{FP}$ is positive, and has geometric multiplicity 1.  This unique eigenvector can be normalized so that all entries are positive.
\end{theorem}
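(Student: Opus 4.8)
The plan is to prove the theorem first in the case where every entry of $M$ is strictly positive, and then recover the general non-negative case by a perturbation argument. In the positive case I would produce the eigenvalue and eigenvector topologically. Let $\Delta=\{x\in\mathbb R^n:x_i\ge 0,\ \sum_i x_i=1\}$ be the standard simplex, which is compact and convex, and observe that when $M>0$ entrywise one has $Mx\neq 0$ for every $x\in\Delta$, so $x\mapsto Mx/\|Mx\|_1$ is a continuous self-map of $\Delta$. Brouwer's fixed point theorem then supplies $x\in\Delta$ with $Mx=\lambda_{FP}\,x$ for $\lambda_{FP}:=\|Mx\|_1>0$, and since $Mx$ has strictly positive entries so does $x$. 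Applying the same argument to $M^{\mathsf T}$ yields a strictly positive row vector $y$ with $yM=\sigma y$ for some $\sigma>0$.

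Next I would prove domination. For any eigenvalue $\mu$ of $M$ with eigenvector $v\neq 0$, applying the triangle inequality row by row and using $M\ge 0$ gives the componentwise bound $|\mu|\,|v|\le M|v|$, where $|v|$ is the vector of coordinate absolute values; left-multiplying by the positive row vector $y$ and cancelling the positive scalar $y|v|$ gives $|\mu|\le\sigma$, hence $\lambda_{FP}\le\sigma$. Running the identical argument for $M^{\mathsf T}$ (whose eigenvalues coincide with those of $M$), now using $x^{\mathsf T}$ as a positive left eigenvector, gives $|\nu|\le\lambda_{FP}$ for every eigenvalue $\nu$, hence $\sigma\le\lambda_{FP}$; so $\sigma=\lambda_{FP}$ and $\lambda_{FP}$ dominates every eigenvalue in absolute value. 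If moreover $|\mu|=\lambda_{FP}$, then $y\big(M|v|-\lambda_{FP}|v|\big)=0$ with non-negative integrand and $y>0$ force $M|v|=\lambda_{FP}|v|$, so $|v|=\lambda_{FP}^{-1}M|v|>0$; the resulting equality in each row of the triangle inequality, all of whose coefficients $M_{ij}$ are positive, forces the coordinates of $v$ to share a single complex phase, whence $v$ is a scalar multiple of a positive vector and $\mu=\lambda_{FP}$. This simultaneously shows the domination is strict in the positive case and that every $\lambda_{FP}$-eigenvector is proportional to $x$. Geometric multiplicity one follows: if $Mw=\lambda_{FP}w$ with $w$ real and not proportional to $x$, choose $t$ (after replacing $w$ by $-w$ if needed) to be the least ratio $x_i/w_i$ over indices with $w_i>0$, so that $x-tw\ge 0$ is nonzero but has a vanishing coordinate; then $M(x-tw)=\lambda_{FP}(x-tw)$ has strictly positive entries because $M>0$, a contradiction.

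For a general non-negative $M$ I would apply all of the above to $M_\varepsilon:=M+\varepsilon J$ with $J$ the all-ones matrix and $\varepsilon>0$, obtaining a positive Perron vector $x_\varepsilon\in\Delta$ and an eigenvalue $\lambda_\varepsilon>0$ bounded above by $\max_i\sum_j (M_\varepsilon)_{ij}$. Passing to a subsequence as $\varepsilon\to 0^+$, compactness of $\Delta$ gives $x_\varepsilon\to x\in\Delta$ and $\lambda_\varepsilon\to\lambda_{FP}\ge 0$ with $Mx=\lambda_{FP}x$; and since eigenvalues are roots of the characteristic polynomial, whose roots depend continuously on its coefficients, the bounds $|\mu_\varepsilon|\le\lambda_\varepsilon$ pass to the limit, so $|\mu|\le\lambda_{FP}$ for every eigenvalue $\mu$ of $M$. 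Finally, for the statement about positive eigenvectors: applying the same Brouwer-plus-limit procedure to $M^{\mathsf T}$ yields a non-negative nonzero left eigenvector $y'$ of $M$ with eigenvalue equal to the spectral radius $\lambda_{FP}$, and then any positive eigenvector $x'$ with $Mx'=\lambda x'$ satisfies $\lambda\,(y'x')=y'Mx'=\lambda_{FP}(y'x')$ with $y'x'>0$, forcing $\lambda=\lambda_{FP}$.

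The step I expect to be the real obstacle is the strict-domination argument in the strictly positive case: converting the scalar equality $|\mu|=\lambda_{FP}$ into the rigidity facts $M|v|=\lambda_{FP}|v|$ and ``all coordinates of $v$ share one phase'' requires using positivity of both $M$ and the left eigenvector together with the equality case of the triangle inequality, and it is exactly this rigidity that powers both the strict spectral gap and the one-dimensionality of the eigenspace. Everything else — Brouwer on the simplex, boundedness of the $\lambda_\varepsilon$, and continuity of roots of the characteristic polynomial — is routine. One minor point needing care is the edge case in the multiplicity argument where the candidate second eigenvector is itself everywhere positive; there one still slides the parameter to the first vanishing ratio to manufacture the contradictory zero coordinate.
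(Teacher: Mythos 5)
Your argument is correct and complete, and it is essentially the proof the paper has in mind: the paper does not prove Theorem \ref{FPClassic} itself but defers to \cite[Thm 3.2.1]{etingof2015tensor}, describing that proof as ``a consequence of the Brouwer Fixed Point Theorem and elementary linear algebra,'' which is exactly your route (Brouwer on the simplex for the strictly positive case, the triangle-inequality rigidity for strict domination and multiplicity one, and a perturbation $M+\varepsilon J$ with a compactness limit for the general non-negative case). No substantive differences to report.
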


This theorem is a consequence of the Brauer Fixed Point Theorem and elementary linear algebra.  As is often the case with proofs using elementary techniques, it is rather involved.  We encourage any readers unfamiliar with this result to consult \cite[Thm 3.2.1]{MR3242743}, because the proof is digestible and this theorem is the technical underpinning of this entire section.

\begin{definition}\label{DefFPdims}
    Let $A$ be a transitive $\mathbb N\Set$ algebra of finite rank.  There is a canonical $\mathbb N$-basis for $A$ given by the simple elements.  For any element $a\in A$, let $L_a$ and $R_a$ be the matrices of left and right multiplication by $a$ respectively, expressed in the basis of simple elements.  Define $\FPdim(a)$ to be the maximal non-negative eigenvalue of the matrix $L_a$ whose existence is guaranteed by the Frobenius-Perron theorem.
    When $X$ is an object of a fusion category, we will simply write $\FPdim(X)$ instead of $\FPdim([X])$.
\end{definition}

\begin{remark}
    It is important to stress the fact that this invariant $\FPdim$ is not well-behaved when the $\mathbb N\Set$ algebra is not transitive, and so we choose not to define it.  In particular, we will never apply this notation to $\S(\mathcal C)$ where $\mathcal C$ is multifusion. 
\end{remark}

The following suite of results, Propositions \ref{Prop:FPdimGeq1}-\ref{Prop:FPdimInvertible1}, are translated directly from \cite{MR3242743} into our setting.  We will need these results to prove Theorem \ref{Thm:NewRegular}, but the proofs of these proposition given \emph{loc. cit.} hold with only superficial modifications, so we do not reprove them here.

\begin{proposition}\cite[Prop 3.3.4]{MR3242743}\label{Prop:FPdimGeq1}
    For any $x\in\s O$,
    \begin{enumerate}
        \item $d=\FPdim(x)$ is an algebraic integer, and for any algebraic conjugate $d'$ of $d$, $d\geq|d'|$.
        \item $\FPdim(x)\geq1$.
    \end{enumerate}
\end{proposition}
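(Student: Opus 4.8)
The plan is to follow the proof of \cite[Prop.~3.3.4]{etingof2015tensor}, working with a transitive fusion semiring $A$ of finite rank (the setting of Definition \ref{DefFPdims}; e.g.\ $A=\S(\s C)$ by Proposition \ref{FusSemTrans}). The one feature special to the nAC setting is that a summand of $1$ may occur in a product $a\overline a$ with multiplicity greater than one, and I would check that this is harmless, since every step below uses only that $1\leq a\overline a$ for each simple $a$. That inequality holds because $1$ is simple, so $\s O_0=\{1\}$ by Proposition \ref{Prop:1LooksLikeThis}, while the fusion-semiring axioms furnish a simple $c$ with $c\leq 1\cap a\overline a$, forcing $c=1$. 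Part~(1) is then quick: the matrix $L_x$ of left multiplication by $x$ in the basis $\s O$ has entries the fusion coefficients $N_{x,y}^z\in\mathbb N$, so its characteristic polynomial $\chi(t)=\det(tI-L_x)$ is monic over $\mathbb Z$. By Theorem \ref{FPClassic}, $d=\FPdim(x)$ is an eigenvalue of $L_x$, hence a root of $\chi$; being a root of a monic integer polynomial, $d$ is an algebraic integer and its minimal polynomial divides $\chi$, so every algebraic conjugate $d'$ of $d$ is again an eigenvalue of $L_x$, whence $|d'|\leq d$ by Theorem \ref{FPClassic}.

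For part~(2) I would first produce a single strictly positive vector that is a simultaneous eigenvector of all the left-multiplication matrices. Set $b=\sum_{y\in\s O}y$. Transitivity of $A$ forces every entry of the right-multiplication matrix $R_b$ to be positive, so by Theorem \ref{FPClassic} the $\lambda_{FP}$-eigenspace of $R_b$ is one-dimensional, spanned by a vector $u$ with strictly positive entries. Associativity makes each $L_y$ commute with $R_b$, so $L_y u$ again lies in that one-dimensional eigenspace; hence $L_y u=c_y u$, with $c_y\geq 0$ since $L_y$ has nonnegative entries and $u$ positive ones. Because $u$ is positive, the appropriate clause of Theorem \ref{FPClassic} identifies $c_y=\FPdim(y)$. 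As $y\mapsto L_y$ is a unital ring homomorphism ($L_{yy'}=L_yL_{y'}$), evaluating $L_{yy'}u=L_yL_{y'}u$ yields $\FPdim(y)\FPdim(y')=\sum_z N_{y,y'}^z\FPdim(z)$; thus $\FPdim$ extends to a unital ring homomorphism $f\colon A\to\mathbb R$ which is positive on $\s O$ (nonzero on each $y$ since $y\cdot 1=y$), hence monotone with respect to $\leq$.

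To conclude I would show $f$ is the \emph{only} unital ring homomorphism $A\to\mathbb R$ positive on $\s O$: for any such $g$, the row vector $(g(y))_{y\in\s O}$ is a strictly positive left eigenvector of every $L_y$, with eigenvalue $g(y)$, so Theorem \ref{FPClassic} applied to the transpose $L_y^{\mathrm{T}}$ forces $g(y)=\FPdim(y)$, i.e.\ $g=f$. Since $x\mapsto\overline x$ is an anti-automorphism of $A$ and $\mathbb R$ is commutative, $y\mapsto f(\overline y)$ is again a unital ring homomorphism positive on $\s O$, so $f(\overline x)=f(x)$ for all $x$. Finally, $1\leq x\overline x$ together with monotonicity of $f$ gives $\FPdim(x)^2=f(x)f(\overline x)=f(x\overline x)\geq f(1)=1$, and as $\FPdim(x)\geq 0$ we conclude $\FPdim(x)\geq 1$. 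The substantive part is (2): the transitivity-driven construction of the common positive eigenvector and the uniqueness of $f$, both resting on the one-dimensionality of the $\lambda_{FP}$-eigenspace of a strictly positive matrix. Part~(1) and the nAC multiplicity bookkeeping are routine by comparison.
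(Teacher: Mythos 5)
Your proof is correct. Note that the paper itself does not reprove this statement: it explicitly defers to \cite[Prop.\ 3.3.4]{etingof2015tensor}, asserting that the proof there ``holds with only superficial modifications.'' Your write-up is a faithful reconstruction of that argument, and you correctly isolate the only nAC-sensitive point, namely that a summand of $1$ may appear in $a\overline a$ with multiplicity greater than one, while everything only requires $1\leq a\overline a$, which you derive properly from Proposition \ref{Prop:1LooksLikeThis} and the fusion-semiring axioms. Part (1) matches the standard argument exactly. For part (2) you take a slightly longer route than necessary: you construct the common positive eigenvector, establish multiplicativity and positivity of $\FPdim$, prove its uniqueness as a positive character, deduce $\FPdim(\overline x)=\FPdim(x)$, and then use $1\leq x\overline x$. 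This is in effect a proof of the character property and of Proposition \ref{Prop:FPdimOfXDual} folded into the proof of part (2) --- perfectly valid, and arguably more informative, but the reference's own proof of (2) is shorter: given (1), the product of all algebraic conjugates of $d$ is a nonzero rational integer (once one checks $d\neq 0$, e.g.\ from $L_xL_{\overline x}\geq L_1=I$ entrywise so $L_x$ is not nilpotent), hence has absolute value at least $1$, and since each conjugate is bounded by $d$ in absolute value this forces $d\geq 1$. One cosmetic quibble: your parenthetical justification that $f$ is nonzero on each $y$ ``since $y\cdot 1=y$'' is too terse as written; the cleanest versions are either that the column of $L_y$ indexed by $1$ is the nonzero vector $y$, so $L_yu\neq 0$ for strictly positive $u$, or that transitivity provides $v$ with $1\leq yv$, whence $f(y)f(v)\geq f(1)=1$.
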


\begin{proposition}\cite[Prop 3.3.6]{MR3242743}\label{Prop:FPdimProps}
    The following hold for any transitive $\mathbb N\Set$ algebra $A$ of finite rank.
    \begin{enumerate}
        \item The function $\FPdim:A\to\mathbb R$ is a character, i.e. a semiring homomorphism $A\to\mathbb C$.
        \item There exists a unique, up to scaling, nonzero element $\s R_A\in\mathbb R\otimes_{\mathbb N}A$ known as the \emph{regular element (of $A$)} such that $x\s R_A=\FPdim(x)\s R_A$ for all $x\in A$.  After appropriate normalization, this element has positive coefficients, and thus $\FPdim(\s R_A)>0$.
        \item $\FPdim$ is the unique character $A\to\mathbb C$ which takes non-negative values on $\s O$, and these values are actually strictly positive.
        \item If $x\in A$ has non-negative coefficients with respect to the basis of A, then $\FPdim(x)$ is the largest non-negative eigenvalue $\lambda_{FP}$ of the matrix $L_x$ of left multiplication by $x$.
    \end{enumerate}
\end{proposition}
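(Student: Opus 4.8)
The plan is to deduce all four parts from the Frobenius--Perron theorem (Theorem~\ref{FPClassic}) applied to the left- and right-multiplication matrices $L_a,R_a$ on $\mathbb R\otimes_{\mathbb N}A$, exactly as in \cite[Ch.~3]{etingof2015tensor}; none of the assertions mentions endomorphism fields or nonsplit simples, so the classical argument transfers essentially verbatim, and the only point that must be re-examined in the present framework is that transitivity, as defined above, still yields a \emph{strictly} positive matrix. First I would fix the basis $\O=\{x_1,\dots,x_n\}$ and set $s:=\sum_i x_i$. For any indices $j,k$, transitivity provides a simple $u$ with $x_k\leq u\,x_j$, hence a summand $x_i$ of $u$ with $N_{i,j}^k\geq 1$; therefore $(L_s)_{kj}=\sum_i N_{i,j}^k\geq 1$, and symmetrically every entry of $R_s$ is at least $1$. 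So $L_s$ and $R_s$ are strictly positive, and Theorem~\ref{FPClassic} applies to them in its strongest form.

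Next I would run the standard eigenvector argument. Apply Theorem~\ref{FPClassic} to $R_s$: its Perron eigenspace is one-dimensional, spanned by a vector $r$ with strictly positive coordinates. Since left and right multiplication always commute, every $L_{x_i}$ preserves this line, so $L_{x_i}r=\mu_i r$ for some $\mu_i\geq 0$; because $r>0$ and $L_{x_i}$ is entrywise nonnegative, Theorem~\ref{FPClassic} forces $\mu_i$ to be the Perron eigenvalue of $L_{x_i}$, i.e.\ $\mu_i=\FPdim(x_i)$ by Definition~\ref{DefFPdims}. Extending by linearity, $L_a r=\psi(a)\,r$ for all $a\in A$, where $\psi\big(\sum_i a_i x_i\big):=\sum_i a_i\FPdim(x_i)$. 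This $\psi$ is additive; it sends $1\mapsto 1$ (apply $L_1=\id$ to $r$, using $1=\sum_{b\in\O_0}b$ from Proposition~\ref{Prop:1LooksLikeThis}); and it is multiplicative because $L_{aa'}=L_aL_{a'}$ and $\mathbb R$ is commutative. Hence $\psi$ is a character, and re-applying the positive-eigenvector criterion of Theorem~\ref{FPClassic} to each $L_a$ shows $\psi(a)=\FPdim(a)$ for every $a\in A$ --- this is part~(1). Part~(4) then follows at once from Theorem~\ref{FPClassic} applied to the nonnegative matrix $L_x$ (every element of $A$ has nonnegative coefficients to begin with; the stated hypothesis becomes substantive only after $\FPdim$ is extended to $\mathbb R\otimes_{\mathbb N}A$, where the same reasoning works).

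For part~(2), the vector $r$ is the coefficient vector of an element $\s R_A\in\mathbb R\otimes_{\mathbb N}A$, and unwinding the basis expansion shows that the relations $L_{x_i}r=\FPdim(x_i)r$ are literally the equations $x_i\s R_A=\FPdim(x_i)\s R_A$; since $\FPdim$ is linear on $A$ this gives $a\s R_A=\FPdim(a)\s R_A$ for all $a$. Positivity of $r$ makes the coefficients positive, and $L_{\s R_A}=\sum_i r_iL_{x_i}$ is then a nonnegative matrix with positive eigenvector $r$ and positive eigenvalue $\sum_i r_i\FPdim(x_i)$, so $\FPdim(\s R_A)>0$. Uniqueness is forced: any competitor $\s R'$ with the same intertwining property has coefficient vector $r'$ with $L_s r'=\FPdim(s)\,r'$, so $r'$ lies in the Perron eigenspace of the strictly positive matrix $L_s$, which is one-dimensional and contains $r$, whence $\s R'\in\mathbb R\,\s R_A$. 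For part~(3), a character $\chi$ with $\chi(x_i)\geq 0$ produces a nonnegative nonzero vector $c=(\chi(x_i))_i$ (nonzero since $\chi(1)=1$) that, on expanding $\chi(x_kx_i)=\chi(x_k)\chi(x_i)$ in the basis, is an eigenvector of the strictly positive matrix $R_s^{\top}$; hence $c$ is its Perron eigenvector up to scale, the normalization $\chi(1)=1$ pins it down, and so $\chi=\FPdim$ --- which indeed takes the strictly positive values $\FPdim(x_i)\geq 1$ on $\O$ by Proposition~\ref{Prop:FPdimGeq1}. I do not expect a genuine obstacle: this is a faithful port of the classical proof. The one load-bearing step --- the one I would write out most carefully --- is manufacturing a \emph{single} common eigenvector for the non-commuting family $\{L_{x_i}\}$ out of the one-dimensional Perron eigenspace of $R_s$, using that $L_{x_i}$ and $R_s$ commute; after that, the rest is bookkeeping, chiefly keeping left versus right multiplication (and their transposes) straight.
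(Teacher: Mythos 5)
Your argument is correct and is precisely the classical Frobenius--Perron argument of \cite[Prop.\ 3.3.6]{etingof2015tensor}, which is exactly what the paper relies on: it explicitly declines to reprove Propositions \ref{Prop:FPdimGeq1}--\ref{Prop:FPdimInvertible1}, asserting that the proofs \emph{loc.\ cit.} go through with only superficial modifications. Your port confirms this, and the one point you rightly flag as load-bearing --- extracting a common positive eigenvector for the family $\{L_{x_i}\}$ from the one-dimensional Perron eigenspace of the strictly positive matrix $R_s$ via the commutation $L_aR_b=R_bL_a$ --- is handled correctly (the only micro-step left implicit is that a nonzero nonnegative eigenvector of a strictly positive matrix is automatically strictly positive, which is needed before invoking Theorem \ref{FPClassic} in part (3)).
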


\begin{proposition}\cite[Prop 3.3.9]{MR3242743}\label{Prop:FPdimOfXDual}
    When $A$ is a fusion semiring, $\FPdim(x)=\FPdim(\overline{x})$, for any $x\in A$.
\end{proposition}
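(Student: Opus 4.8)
The plan is to recognize $x\mapsto\FPdim(\overline{x})$ as yet another character on $A$ that is positive on $\O$, and then to invoke the uniqueness clause of Proposition \ref{Prop:FPdimProps}(3). Concretely, define $\phi\colon A\to\mathbb{C}$ by $\phi(x):=\FPdim(\overline{x})$. Additivity and $\mathbb{N}$-linearity of $\phi$ are immediate, since the involution $\overline{\;\cdot\;}$ is $\mathbb{N}$-linear and $\FPdim$ is additive. For the unit I would note that, because $1$ is simple in a fusion semiring, the duality axiom applied with $a=1$ forces $\overline{1}=1$, so $\phi(1)=\FPdim(1)=1$. For multiplicativity I would use that the involution of a fusion semiring is an anti-homomorphism, $\overline{xy}=\overline{y}\,\overline{x}$ (this is part of the structure; for $A=\S(\s C)$ it is the categorical identity $(X\otimes Y)^*\cong Y^*\otimes X^*$), together with commutativity of $\mathbb{C}$: $\phi(xy)=\FPdim(\overline{y}\,\overline{x})=\FPdim(\overline{y})\FPdim(\overline{x})=\phi(x)\phi(y)$. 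Hence $\phi$ is a character.

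Next I would check that $\phi$ is strictly positive on $\O$. The involution, being an $\mathbb{N}$-linear bijection of the free commutative monoid $A$, must permute the indecomposable elements, so it restricts to a permutation of $\O$; in particular $\overline{x}\in\O$ whenever $x\in\O$. Then Proposition \ref{Prop:FPdimGeq1}(2) yields $\phi(x)=\FPdim(\overline{x})\geq 1>0$.

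Finally, Proposition \ref{Prop:FPdimProps}(3) says that $\FPdim$ is the \emph{unique} character $A\to\mathbb{C}$ taking non-negative values on $\O$. Since $\phi$ is such a character, $\phi=\FPdim$, i.e. $\FPdim(\overline{x})=\FPdim(x)$ for all $x\in A$, which is the claim.

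The only step that deserves genuine attention is the multiplicativity of $\phi$, which relies on the involution reversing products: for fusion semirings arising from categories this is exactly the compatibility of duality with $\otimes$, and in the abstract setting it is built into the definition of the involution. In either case the phenomena peculiar to the nAC setting (nontrivial endomorphism division algebras, Galois (non)triviality) never enter, and the proof is the verbatim analogue of the classical one.
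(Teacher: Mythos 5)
Your proof is correct and is essentially the argument the paper relies on: the paper does not reprove this proposition but defers to \cite[Prop 3.3.9]{etingof2015tensor}, whose proof is precisely your argument that $x\mapsto\FPdim(\overline{x})$ is a character taking positive values on $\O$ and hence coincides with $\FPdim$ by the uniqueness clause of Proposition \ref{Prop:FPdimProps}(3). The one input you rightly single out, $\overline{xy}=\overline{y}\,\overline{x}$, is not written explicitly into the paper's abstract definition of a fusion semiring, but for $\S(\mathcal C)$ it is exactly the second relation of Lemma \ref{Lem:CyclicPermLemma} (equivalently $(X\otimes Y)^*\cong Y^*\otimes X^*$), so nothing is missing in the case the paper actually uses.
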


\begin{proposition}\cite[Cor 3.3.10]{MR3242743} \label{Prop:FPdimInvertible1}
    When $\mathcal C$ is a fusion category, $\FPdim(g)=1$ if and only if $g$ is an invertible object.
\end{proposition}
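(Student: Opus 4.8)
The plan is to transplant the classical argument (\cite[Cor.~3.3.10]{etingof2015tensor}) and check that it never invokes split-simplicity. I shall use throughout that $\FPdim\colon\S(\s C)\to\mathbb{C}$ is a semiring homomorphism with strictly positive values on $\O(\s C)$ (Proposition~\ref{Prop:FPdimProps}, parts (1) and (3)), that $\FPdim(x)\geq 1$ for every $x\in\O(\s C)$ (Proposition~\ref{Prop:FPdimGeq1}(2)), and that $\FPdim(\overline x)=\FPdim(x)$ (Proposition~\ref{Prop:FPdimOfXDual}). From these I first extract two facts. (a) $\FPdim(\1)=1$: multiplicativity gives $\FPdim(\1)\FPdim(x)=\FPdim(x)$ for $x\in\O(\s C)$, and $\FPdim(x)>0$. (b) If $Y$ is nonzero with simple decomposition $[Y]=\sum_i m_i[Y_i]$ (the $Y_i$ distinct simple, $m_i\geq 1$), then additivity of $\FPdim$ together with part (2) gives $\FPdim(Y)=\sum_i m_i\FPdim(Y_i)\geq\sum_i m_i\geq 1$; in particular $\FPdim(Y)=1$ forces a single summand of multiplicity one, i.e.\ $Y$ is simple.

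For the direction ``$g$ invertible $\Rightarrow\FPdim(g)=1$'', invertibility gives $g\otimes g^*\cong\1$, so by multiplicativity and (a) we get $\FPdim(g)\FPdim(g^*)=1$; since $\FPdim(g^*)=\FPdim(g)\geq 1$ this forces $\FPdim(g)=1$. For the converse, suppose $\FPdim(g)=1$; by (b) we may assume $g$ is simple. Then $\FPdim(g^*)=1$ as well, so $\FPdim(g\otimes g^*)=1$, and a second application of (b) shows $g\otimes g^*$ is simple. On the other hand $\1$ is a direct summand of $g\otimes g^*$: the coevaluation $\coev_g\colon\1\to g\otimes g^*$ is nonzero (by the triangle identities, since $g\neq 0$) and $\1$ is simple, so $\coev_g$ is a split monomorphism; equivalently, $N_{g,\overline g}^{\1}\geq 1$ by the fusion-semiring axiom applied to $a=g$, $b=\overline a$, together with the simplicity of $1$. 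A simple object admitting $\1$ as a summand is isomorphic to $\1$, so $g\otimes g^*\cong\1$ and $g$ is invertible.

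The one place where the nAC setting demands attention is this last step: one must not presume that the multiplicity $N_{g,\overline g}^{\1}=\dim_{\mathbb{E}}\End(g)$ equals $1$ at the outset, as it typically fails for nonsplit simples. Rather, the numerical identity $\FPdim(g\otimes g^*)=1$ is precisely what collapses $g\otimes g^*$ down to a single copy of $\1$ — and hence, after the fact, forces $\End(g)\cong\mathbb{E}$. Every other ingredient is purely formal and survives the weakened fusion-semiring axioms verbatim, so I anticipate no real obstruction: the remaining work is only the bookkeeping of confirming that the classical proof consumes nothing beyond the adapted Propositions~\ref{Prop:FPdimGeq1}, \ref{Prop:FPdimProps}, and~\ref{Prop:FPdimOfXDual}.
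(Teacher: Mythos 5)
Your proof is correct and is essentially the argument the paper intends: the paper does not reprove this proposition but defers to \cite[Cor 3.3.10]{etingof2015tensor}, asserting the classical proof goes through with only superficial modifications, and your write-up is precisely that translation. You also correctly isolate the one genuinely non-classical point, namely that $N_{g,\overline g}^{\1}=\dim_{\mathbb E}\End(g)$ need not be $1$ a priori, and that the equality $\FPdim(g\otimes g^*)=1$ is what forces it to be $1$ (so invertible objects are necessarily split over $\mathbb E$).
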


\begin{theorem}\label{Thm:NewRegular}
    Let $\mathcal C$ be fusion over $\mathbb K$ with endomorphism field $\mathbb E=\End(\1_{\mathcal C})$.  Up to a positive scalar factor, the regular element $\s R=\s R_{\mathbb N\mathcal O(\mathcal C)}$ is given by
    \[\s R\;=\;\sum_{X\in\s O(\mathcal C)}\frac{\FPdim(X)}{\dim_{\mathbb E}\End(X)}\cdot [X]\,.\]
    The left embedding $\lambda_X:\mathbb E\hookrightarrow\End(X)$ (see Definition \ref{Def:LREmbeddings}) is used to define the vector space dimension in the above formula.  When performing computations with regular elements, we will use the definite article, and refer to this preferred normalization as \textbf{the} regular element of $\mathbb N\mathcal O(\mathcal C)$.
\end{theorem}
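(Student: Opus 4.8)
The plan is to exhibit the displayed element as a solution of the eigenvalue equation that characterizes the regular element in Proposition~\ref{Prop:FPdimProps}(2), and then invoke the uniqueness part of that proposition. Write $\delta_X:=\dim_{\mathbb K}\End(X)$ and recall $d_{\mathcal C}=[\mathbb E:\mathbb K]$. Since $\mathbb E$ is a field and $\lambda_X$ is an embedding, $\End(X)$ is a free left $\mathbb E$-module via $\lambda_X$, and from the tower $\mathbb K\subseteq\lambda_X(\mathbb E)\subseteq\End(X)$ one gets $\dim_{\mathbb E}\End(X)=\delta_X/d_{\mathcal C}$; this value does not actually depend on whether one uses $\lambda_X$ or $\rho_X$, since $\mathbb E$ is a field. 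As $d_{\mathcal C}$ is a fixed positive integer, the proposed $\s R$ is a positive scalar multiple of $\s R':=\sum_{X\in\s O(\mathcal C)}\big(\FPdim(X)/\delta_X\big)[X]$, so it suffices to work with $\s R'$. Because $\FPdim$ is a character (Proposition~\ref{Prop:FPdimProps}(1)), both sides of $x\s R'=\FPdim(x)\s R'$ are additive in $x$, so it is enough to verify this equation for $x=[Y]$ with $Y$ simple; comparing the coefficient of $[Z]$ on each side, this reduces to the scalar identity
\[\sum_{X\in\s O(\mathcal C)}\frac{\FPdim(X)}{\delta_X}\,N_{Y,X}^{Z}\;=\;\FPdim(Y)\,\frac{\FPdim(Z)}{\delta_Z}\qquad\text{for all simple }Y,Z,\]
where $N_{Y,X}^Z$ are the fusion coefficients of $\S(\mathcal C)$. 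Once this holds, Proposition~\ref{Prop:FPdimProps}(2) identifies $\s R'$, which has strictly positive coefficients by Proposition~\ref{Prop:FPdimGeq1}(2), with the regular element up to a positive scalar.

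The engine is a dimension-counting form of Frobenius reciprocity suited to the nAC setting. Rigidity provides a natural $\mathbb K$-linear isomorphism $\Hom(Y\otimes X,Z)\cong\Hom(X,Y^\vee\otimes Z)$ for a suitable dual $Y^\vee$ of $Y$; since $Y$ is simple, $Y^\vee$ is simple, and $\FPdim(Y^\vee)=\FPdim(Y)$ by Proposition~\ref{Prop:FPdimOfXDual}. Decomposing each side into simples and applying Lemma~\ref{Schur'sLemma} (so that $\Hom(W,Z)=0$ for $W\not\cong Z$ while $\Hom(Z^{\oplus n},Z)\cong\End(Z)^{\oplus n}$ as $\mathbb K$-vector spaces), the left-hand side has $\mathbb K$-dimension $N_{Y,X}^Z\,\delta_Z$ and the right-hand side has $\mathbb K$-dimension $N_{Y^\vee,Z}^X\,\delta_X$. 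Hence $N_{Y,X}^Z\,\delta_Z=N_{Y^\vee,Z}^X\,\delta_X$, equivalently $N_{Y,X}^Z/\delta_X=N_{Y^\vee,Z}^X/\delta_Z$.

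Substituting this into the target identity and then using that $\FPdim$ is a character together with $\FPdim(Y^\vee)=\FPdim(Y)$ gives
\[\sum_{X}\frac{\FPdim(X)}{\delta_X}\,N_{Y,X}^Z\;=\;\frac1{\delta_Z}\sum_{X}\FPdim(X)\,N_{Y^\vee,Z}^X\;=\;\frac{\FPdim(Y^\vee\otimes Z)}{\delta_Z}\;=\;\frac{\FPdim(Y)\FPdim(Z)}{\delta_Z}\,,\]
which is exactly what was required, completing the argument modulo the cited propositions. The one delicate point is the nAC reciprocity step: one must keep track of the fact that the two sides of the rigidity adjunction, once decomposed into simples, contribute the endomorphism algebras $\End(Z)$ and $\End(X)$ respectively — an asymmetry that is invisible when all simples are split, but which is precisely what produces the correction factors $1/\dim_{\mathbb E}\End(X)$ — and one should fix which dual $Y^\vee$ realizes the adjunction and confirm that its Frobenius--Perron dimension agrees with that of $Y$. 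The remaining manipulations are routine bookkeeping with Propositions~\ref{Prop:FPdimGeq1}--\ref{Prop:FPdimOfXDual}.
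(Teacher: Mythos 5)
Your proposal is correct and follows essentially the same route as the paper: the paper's Lemma \ref{Lem:CyclicPermLemma} is exactly the dimension-weighted Frobenius reciprocity identity $N_{Y,X}^{Z}\dim\End(Z)=N_{Y^\vee,Z}^{X}\dim\End(X)$ that you derive from rigidity and Schur's lemma, and the paper likewise verifies the eigenvector equation $w\s R=\FPdim(w)\s R$ coefficient-by-coefficient and concludes by the uniqueness clause of Proposition \ref{Prop:FPdimProps}(2) together with $\FPdim(\overline{w})=\FPdim(w)$. The only cosmetic difference is your rescaling from $\dim_{\mathbb E}$ to $\dim_{\mathbb K}$ by the constant $d_{\s C}$, which is harmless since the statement is only up to a positive scalar.
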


In order to prove this, it will be necessary first to establish a lemma in the spirit of \cite[Prop 3.1.6]{MR3242743}.

\begin{lemma}\label{Lem:CyclicPermLemma}
    For any multifusion category $\mathcal C$, and any $x,y,z\in\s O_{\S(\mathcal C)}$, the function $(x,y,z)\mapsto\dim_{\mathbb E}\End(Z)\cdot N_{x,y}^{\overline{z}}$ satisfies the following relations:
    \begin{gather*}
        \dim_{\mathbb E}\End(Z)\cdot N_{x,y}^{\overline{z}}\;=\;\dim_{\mathbb E}\End(Y)\cdot N_{z,x}^{\overline{y}}\;=\;\dim_{\mathbb E}\End(X)\cdot N_{y,z}^{\overline{x}}\\
        \dim_{\mathbb E}\End(Z)\cdot N_{x,y}^{\overline{z}}\;=\;\dim_{\mathbb E}\End(Z)\cdot N_{\overline{y},\overline{x}}^{z}
    \end{gather*}
    
\end{lemma}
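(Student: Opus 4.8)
The plan is to realize the quantity $\dim_{\mathbb E}\End(Z)\cdot N_{x,y}^{\overline z}$ as the $\mathbb E$-dimension of a Hom-space whose symmetry under cyclic permutation and under dualization is visible from the rigid structure. Concretely, for simple objects $X,Y,Z$ I would consider the space $\Hom_{\mathcal C}(X\otimes Y\otimes Z,\1)$. On one hand, decomposing $X\otimes Y$ into simples gives $\Hom(X\otimes Y\otimes Z,\1)\cong\bigoplus_W \Hom(W,\overline Z)\otimes_{?}\Hom(X\otimes Y,W)$; the only $W$ contributing is $W\cong\overline Z$, and by Schur the multiplicity space $\Hom(X\otimes Y,\overline Z)$ is a free module of rank $N_{x,y}^{\overline z}$ over the division algebra $\End(\overline Z)$. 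The first key step is therefore to check that $\dim_{\mathbb E}\Hom_{\mathcal C}(X\otimes Y\otimes Z,\1)=\dim_{\mathbb E}\End(Z)\cdot N_{x,y}^{\overline z}$, using $\dim_{\mathbb E}\End(\overline Z)=\dim_{\mathbb E}\End(Z)$ (which follows since $Z\mapsto\overline Z$ is an $\mathbb E$-linear duality with $(\overline Z)\,\overline{\phantom{Z}}\cong Z$, so it induces an isomorphism of the endomorphism division algebras as $\mathbb E$-algebras, hence of their $\mathbb E$-dimensions). The only subtlety here is bookkeeping about whether the relevant $\mathbb E$-action on the multiplicity space is via $\lambda$ or $\rho$, but since all we need is the $\mathbb E$-\emph{dimension} of the total space $\Hom(X\otimes Y\otimes Z,\1)$ — which carries an unambiguous $\mathbb E$-action through $\1$ — the intermediate choices wash out.

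Next I would establish cyclic symmetry. Rigidity gives natural isomorphisms
\[
\Hom_{\mathcal C}(X\otimes Y\otimes Z,\1)\;\cong\;\Hom_{\mathcal C}(Y\otimes Z,\overline X)\;\cong\;\Hom_{\mathcal C}(Y\otimes Z\otimes X,\1),
\]
the first by adjunction ($X\otimes(-)$ is left adjoint to $\overline X\otimes(-)$, or equivalently by bending the $X$-strand around using $\mathrm{ev}$ and $\mathrm{coev}$), the second again by adjunction on the $X$-strand on the other side. These isomorphisms are $\mathbb E$-linear — this is the point where I must be slightly careful, since bending a strand may introduce a Galois twist on the $X$-factor, but composing an isomorphism $\Hom(X\otimes Y\otimes Z,\1)\to\Hom(Y\otimes Z\otimes X,\1)$ and tracking it through, the $\mathbb E$-action inherited from $\1$ on source and target is respected because the unit object is untouched by the bending. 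Hence the three Hom-spaces have equal $\mathbb E$-dimension, which by the first step is exactly
\[
\dim_{\mathbb E}\End(Z)\cdot N_{x,y}^{\overline z}=\dim_{\mathbb E}\End(X)\cdot N_{y,z}^{\overline x}=\dim_{\mathbb E}\End(Y)\cdot N_{z,x}^{\overline y},
\]
which is the first displayed relation.

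For the second relation, $\dim_{\mathbb E}\End(Z)\cdot N_{x,y}^{\overline z}=\dim_{\mathbb E}\End(Z)\cdot N_{\overline y,\overline x}^{z}$, it suffices to show $N_{x,y}^{\overline z}=N_{\overline y,\overline x}^{z}$ (no dimension factor moves, since the outgoing object is $Z$ in both cases up to replacing $\overline z$ by its dual). This is the statement that $\dim_{\mathbb E}\Hom(X\otimes Y,\overline Z)$ and $\dim_{\mathbb E}\Hom(\overline Y\otimes\overline X, Z)$ agree after dividing by $\dim_{\mathbb E}\End(Z)$; equivalently, apply the contravariant duality functor $(-)^*$, which sends $\Hom(X\otimes Y,\overline Z)\to\Hom(Z,\overline Y\otimes\overline X)$ bijectively (using $(X\otimes Y)^*\cong \overline Y\otimes\overline X$ and $\overline Z{}^*\cong Z$), and note this is again compatible with the division-algebra module structure so that the free ranks match. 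Alternatively, and perhaps cleaner, observe $N_{\overline y,\overline x}^z = N_{\overline y,\overline x}^{\overline{\overline z}}$ and apply the first (cyclic) relation together with the elementary identity $N_{a,b}^c = N_{\overline c,a}^{\overline b}$ which itself follows from the adjunction argument above specialized to $\Hom(A\otimes B\otimes C^*,\1)$. I expect the main obstacle to be purely the $\mathbb E$-linearity verifications: one must confirm that every adjunction/duality isomorphism used is not merely $\mathbb K$-linear but $\mathbb E$-linear for the $\mathbb E$-action coming from $\1$, because a priori the presence of Galois-nontrivial objects means strand-bending could conjugate pseudo-scalars. The resolution is that the target and source Hom-spaces both have their $\mathbb E$-module structure defined through post-composition/pre-composition with $\mathrm{id}\otimes e$ on the $\1$-strand only, and that strand is fixed by all the moves, so no Galois twist can appear on the relevant action even if it appears on the internal strands.
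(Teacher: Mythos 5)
Your proposal is correct and follows essentially the same route as the paper: both identify $\dim_{\mathbb E}\End(Z)\cdot N_{x,y}^{\overline z}$ with the $\mathbb E$-dimension of a Hom-space into the unit (the paper works with $\Hom(X\otimes Y,Z^*)\cong\Hom((Z\otimes X)\otimes Y,\1)$, which is just a cyclic rotation of your $\Hom(X\otimes Y\otimes Z,\1)$) and then cycle via the duality adjunctions, obtaining the second relation from the duality functor together with the symmetry $\dim\Hom(A,B)=\dim\Hom(B,A)$ in a semisimple category. Your observation that only the $\mathbb E$-\emph{dimension} matters, so any Galois twisting of the $\mathbb E$-action introduced by strand-bending is harmless, is exactly the reason the paper's computation goes through without comment.
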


\begin{proof}
    For every simple object $W$ in $\mathcal C$, denote $[W]\in\s O_{\S(\mathcal C)}$ by the corresponding lower case letter, i.e. $[W]=w$.  Using adjoint rules for duals, we can compute
    \begin{align*}
        \dim_{\mathbb E}\End(Z)\cdot N_{x,y}^{\overline z}&=\dim_{\mathbb E}\Hom(Z,Z)\cdot N_{x,y}^{\overline z}\\
        &=\dim_{\mathbb E}\Hom(Z^*,Z^*)\cdot N_{x,y}^{\overline z}\\
        &=\dim_{\mathbb E}\Hom(N_{x,y}^{\overline z}\cdot Z^*,Z^*)\\
        &=\dim_{\mathbb E}\Hom\big(X\otimes Y\,,\,Z^*\big)\\
        &=\dim_{\mathbb E}\Hom\big(Z^{**}\otimes(X\otimes Y)\,,\,\1\big)\\
        &=\dim_{\mathbb E}\Hom\big((Z\otimes X)\otimes Y\,,\,\1\big)\\
        &=\dim_{\mathbb E}\Hom\big(Z\otimes X\,,\,Y^*\big)\\
        &=\dim_{\mathbb E}\End(Y)\cdot N_{z,x}^{\overline{y}}\,.
    \end{align*}
    By repeating the cyclic permutation, this proves the first relation.  To establish the second relation, observe that
    \begin{align*}
        \dim_{\mathbb E}\End(Z)\cdot N_{x,y}^{\overline z}&=\dim_{\mathbb E}\Hom\big(X\otimes Y\,,\,Z^*\big)\\
        &=\dim_{\mathbb E}\Hom\big(Z^*\,,\,X\otimes Y\big)\\
        &=\dim_{\mathbb E}\Hom\big(Y^*\otimes X^*\,,\,Z^{**}\big)\\
        &=\dim_{\mathbb E}\Hom\big(Y^*\otimes X^*\,,\,Z\big)\\
        &=\dim_{\mathbb E}\End(Z)\cdot N_{\overline{y},\overline{x}}^{z}\,.\qedhere
    \end{align*}
\end{proof}

Armed with this new lemma, we are now ready to prove Theorem \ref{Thm:NewRegular}.

\begin{proof}
    From part (2) of Proposition \ref{Prop:FPdimProps}, uniqueness means that it will suffice to show that $$w\s R=\FPdim(w)\s R.$$  By repeatedly applying Lemma \ref{Lem:CyclicPermLemma} it follows that
    \begin{align*}
        w\s R&=w\cdot\sum_{x\in\s O}\frac{\FPdim(x)}{\dim_{\mathbb E}\End(X)}\cdot x\\
        &=\sum_{x\in\s O}\frac{\FPdim(x)}{\dim_{\mathbb E}\End(X)}\cdot wx\\
        &=\sum_{x,y\in\s O}\frac{\FPdim(x)}{\dim_{\mathbb E}\End(X)}\cdot N_{w,x}^{y}y\\
        &=\sum_{x,y\in\s O}\frac{\FPdim(x)}{\dim_{\mathbb E}\End(X)\dim_{\mathbb E}\End(Y)}\cdot \dim_{\mathbb E}\End(Y)\cdot N_{w,x}^{y}y\\
        &=\sum_{x,y\in\s O}\frac{\FPdim(x)}{\dim_{\mathbb E}\End(X)\dim_{\mathbb E}\End(Y)}\cdot \dim_{\mathbb E}\End(X)\cdot N_{\overline{y},w}^{\overline{x}}y\\
        &=\sum_{x,y\in\s O}\frac{\FPdim(x)}{\dim_{\mathbb E}\End(X)\dim_{\mathbb E}\End(Y)}\cdot \dim_{\mathbb E}\End(X)\cdot N_{\overline{w},y}^{x}y\\
        &=\sum_{y\in\s O}\frac{\FPdim\left(\sum_{x\in\s O}N_{\overline{w},y}^{x}x\right)}{\dim_{\mathbb E}\End(Y)}\cdot y\\
        &=\sum_{y\in\s O}\frac{\FPdim\left(\overline{w}y\right)}{\dim_{\mathbb E}\End(Y)}\cdot y\\
        &=\sum_{y\in\s O}\frac{\FPdim\left(\overline{w}\right)\FPdim\left(y\right)}{\dim_{\mathbb E}\End(Y)}\cdot y\\
        &=\FPdim\left(\overline{w}\right)\sum_{y\in\s O}\frac{\FPdim\left(y\right)}{\dim_{\mathbb E}\End(Y)}\cdot y\\
        &=\FPdim\left(\overline{w}\right)\s R\\
        &\hspace{-3pt}\mathop{=}\limits^{\ref{Prop:FPdimOfXDual}}\FPdim\left(w\right)\s R\qedhere
    \end{align*}
\end{proof}

\begin{definition}
    The Frobenius-Perron dimension of a fusion category $\s C$ over $\mathbb K$ is the Frobenius-Perron dimension of the regular element of its Grothendieck semiring:
    \[\FPdim(\s C):=\FPdim\big(\s R_{\S(\s C)}\big)\;=\;\sum_{x\in \s O}\frac{\FPdim(x)^2}{\dim_{\mathbb E}\End(X)}\,.\]
\end{definition}

\begin{example}\label{Eg:SomeFPComputations}
    Here are some computations of $\FPdim(\mathcal C)$ for various fusion categories.
    \begin{enumerate}
        \item In the category $\Vec_{\mathbb C}$ of complex vector spaces, $\FPdim(\mathbb C)=1$, since this is the monoidal unit.  There is only this one simple object, so $\FPdim(\Vec_{\mathbb C})=1$.
        \item Following Example \ref{RepQ8}, the $\FPdim$ of the category $\Rep_{\mathbb R}(Q_8)$ is
    \[\FPdim\big(\Rep_{\mathbb R}(Q_8)\big)\;=\;\frac{1^2}{1}+\frac{1^2}{1}+\frac{1^2}{1}+\frac{1^2}{1}+\frac{4^2}{4}\;=\;8\;=\;|Q_8|\,.\]
        \item Let $\mathbb F_2$ be the Galois field of order two, and let $\s C=\Rep_{\mathbb F_2}(\mathbb Z/3\mathbb Z)$.  This category is fusion because the characteristic of the base field does not divide the order of the group.  There are two simple objects, $\1$ and $V$, which are the irreducible representations.
    There is a basis $\{u,v\}\subseteq V$ for which the action of the group generator $\langle t\rangle=\mathbb Z/3\mathbb Z$ is given by
    \begin{align*}
        t.u&=v\,,\\
        t.v&=-u-v\,.
    \end{align*}
    These formulas show that $t^2+t+1$ acts by zero on $V$.  Since the group is commutative, the algebra of equivariant endomorphisms is
    \[\cong\mathbb F_2[\mathbb Z/3\mathbb Z]/(t^2+t+1)\cong\mathbb F_2[t]/(t^2+t+1)\cong\mathbb F_4\,.\]
    The fusion rules are given by $V\otimes V\cong \1\oplus\1\oplus V$.  Thus the characteristic polynomial of $L_V$ is $(\lambda-2)(\lambda+1)$, so by Proposition \ref{Prop:FPdimProps} part 4, this implies that $\FPdim(V)=2$.  With all of this data in hand, we can now compute $\FPdim(\s C)$ using Theorem \ref{Thm:NewRegular}:
    \[\FPdim(\s C)\;=\;\sum_{x\in\s O}\frac{\FPdim(x)^2}{\dim_{\mathbb E}\End(X)}\;=\;\frac{1^2}{1}\;+\;\frac{2^2}{2}\;=\;3\,.\]
    \end{enumerate}
\end{example}

By construction, $\FPdim(\s C)$ is an algebraic number.  In the AC case, the denominators in Theorem \ref{Thm:NewRegular} are all $1$, so the Frobenius-Perron dimensions of categories are actually algebraic \emph{integers}.  The above examples show that it is possible for these denominators to be greater than $1$, but that sometimes the numerator is divisible by this dimension, and the end result is an algebraic integer anyhow.  It turns out that this actually always happens, but it will take a few steps to prove this.  The argument is that there is a certain canonical object called the regular object, whose $\FPdim$ is exactly $\FPdim(\s C)$.  Such an object exists in the classical case as well, but analyzing its $\FPdim$ takes more work in the nAC setting.

\begin{definition}\label{DefDominant}
    A morphism $f:A\to B$ in $\mathbb N\Set$ is said to be dominant if for all $b\in B$, there is some $a\in A$ such that $b\leq f(a)$.  A tensor functor $(F,J):\s C\to\s D$ between fusion categories is said to be dominant if $\S(F):\S(\s C)\to \S(\s D)$ is dominant.
\end{definition}

\begin{remark}
    For a functor $F:\s C\to\s D$, being dominant is the same as the property that for every simple object $Y$ in $\s D$ there exists an object $X$ in $\s C$ such that $Y$ is a subobject of $F(X)$.
\end{remark}

\begin{proposition}\cite[Prop 3.3.13]{MR3242743}\label{fPlaysNiceWFPdims}
    Let $f:A\to B$ be morphism of $\mathbb N\Set$ algebras, and assume that $A$ and $B$ are transitive so that $\FPdim$s are defined.  In this setting, the following statements hold.
    \begin{enumerate}
        \item The map $f$ preserves Frobenius-Perron dimensions of objects.
        \item If $f$ is dominant, then
        \[f(\s R_A)\;=\;\frac{\FPdim(\s R_A)}{\FPdim(\s R_B)}\cdot\s R_B\,.\]
    \end{enumerate}
\end{proposition}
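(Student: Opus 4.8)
The plan is to reduce both parts to the characterizations of $\FPdim$ and of the regular element collected in Proposition~\ref{Prop:FPdimProps}, running the classical argument and checking that nothing breaks in the nAC setting. For part~(1), note that $d:=\FPdim_B\circ f$ is a composite of semiring homomorphisms, hence a character $A\to\mathbb C$ by Proposition~\ref{Prop:FPdimProps}(1). If $x\in\s O(A)$ is simple then $f(x)$ is a nonnegative $\mathbb N$-combination of simple elements of $B$, and since $\FPdim_B$ is additive and strictly positive on $\s O(B)$ (Proposition~\ref{Prop:FPdimProps}(3)) we get $d(x)=\FPdim_B(f(x))\geq0$. The uniqueness clause of Proposition~\ref{Prop:FPdimProps}(3) then forces $d=\FPdim_A$ as characters on $A$, and extending $\mathbb R$-linearly gives $\FPdim_B(f(z))=\FPdim_A(z)$ for all $z\in\mathbb R\otimes_{\mathbb N}A$; this identity will be used freely below.

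For part~(2), set $a_0:=\sum_{x\in\s O(A)}x\in A$, so that $f(a_0)=\sum_{x\in\s O(A)}f(x)\in B$. First I would record that dominance of $f$ makes $f(a_0)$ have strictly positive coefficient on every simple element of $B$: given simple $y\in B$, Definition~\ref{DefDominant} yields $a\in A$ with $y\leq f(a)$, and expanding $a$ in the basis of simples shows $y\leq f(x)$ for some $x\in\s O(A)$, hence $y\leq f(a_0)$. Applying the same observation to the normalized regular element $\s R_A=\sum_{x\in\s O(A)}c_x x$ (with all $c_x>0$, Proposition~\ref{Prop:FPdimProps}(2)) shows $f(\s R_A)$ is likewise strictly positive, in particular nonzero. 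Next I would use transitivity of $B$ to control the left-multiplication matrix $L_{f(a_0)}$: for simple $z,w\in B$ choose a simple $u$ with $z\leq uw$; from $u\leq f(a_0)$ we get $z\leq uw\leq f(a_0)\cdot w$, so every entry of $L_{f(a_0)}$ is strictly positive. Theorem~\ref{FPClassic} then applies: the eigenvalue $\lambda_{FP}$ of $L_{f(a_0)}$, which equals $\FPdim_B(f(a_0))$ by Proposition~\ref{Prop:FPdimProps}(4), has geometric multiplicity one, with eigenspace spanned by the unique (up to positive scalar) strictly positive eigenvector.

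It remains to produce two strictly positive eigenvectors of $L_{f(a_0)}$ for the eigenvalue $\FPdim_B(f(a_0))$. The defining property of $\s R_B$ (Proposition~\ref{Prop:FPdimProps}(2)) gives $f(a_0)\,\s R_B=\FPdim_B(f(a_0))\,\s R_B$ with $\s R_B$ strictly positive, so that is one. For the other: since $a_0\,\s R_A=\FPdim_A(a_0)\,\s R_A$ and $f$ is multiplicative, $f(a_0)\cdot f(\s R_A)=f(a_0\,\s R_A)=\FPdim_A(a_0)\,f(\s R_A)$, where $\FPdim_A(a_0)=\FPdim_B(f(a_0))=\lambda_{FP}$ by part~(1), and $f(\s R_A)$ is strictly positive by the previous paragraph. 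By uniqueness of the positive eigenvector, $f(\s R_A)=\kappa\,\s R_B$ for some $\kappa>0$; applying $\FPdim_B$ and part~(1) once more yields $\FPdim(\s R_A)=\FPdim_B(f(\s R_A))=\kappa\,\FPdim(\s R_B)$, hence $\kappa=\FPdim(\s R_A)/\FPdim(\s R_B)$, which is the claim. \textbf{The main point requiring care} is the verification that $L_{f(a_0)}$ is a strictly positive matrix, since this is exactly where both hypotheses (dominance of $f$ and transitivity of $B$) are consumed; beyond that the argument is the bookkeeping of the algebraically closed case, unchanged because the weights $\dim_{\mathbb E}\End(-)$ have already been absorbed into the definitions of $\FPdim$ and $\s R$.
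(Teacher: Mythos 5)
Your argument is correct, and it is essentially the classical proof of \cite[Prop 3.3.13]{etingof2015tensor} that the paper cites without reproving: part (1) via uniqueness of the nonnegative character, and part (2) via exhibiting $f(\s R_A)$ and $\s R_B$ as strictly positive eigenvectors of the strictly positive matrix $L_{f(a_0)}$ and invoking Theorem \ref{FPClassic}. Your closing remark is also the right diagnosis of why no adaptation is needed — the statement concerns only transitive $\mathbb N\Set$ algebras, so the division-algebra weights play no role here.
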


\begin{corollary}
    If a fusion category $\s C$ over $\mathbb K$ admits a tensor functor $(F,J):\s C\to \Vec_{\mathbb K}$, then $\FPdim(X)\in\mathbb N$ for any object $X$ in $\s C$.
\end{corollary}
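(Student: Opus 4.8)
The plan is to transport the question along the functor $\S(F)$ down to the semiring $\S(\Vec_{\mathbb K})$, where the Frobenius--Perron dimension is visibly nothing more than $\mathbb K$-dimension. First I would record that $\Vec_{\mathbb K}$ is a fusion category over $\mathbb K$ with a single isomorphism class of simple object, namely the unit $\mathbb K$ itself, so that $\S(\Vec_{\mathbb K})\cong\mathbb N$ as an $\mathbb N\Set$-algebra, with $[\mathbb K]$ the generator and $[\mathbb K]\cdot[\mathbb K]=[\mathbb K]$. This semiring is transitive of rank $1$, and the matrix of left multiplication by $n\cdot[\mathbb K]$ is the $1\times 1$ matrix $(n)$, so $\FPdim\big(n\cdot[\mathbb K]\big)=n$. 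In other words, under the identification $\S(\Vec_{\mathbb K})=\mathbb N$ the character $\FPdim$ is simply the inclusion $\mathbb N\hookrightarrow\mathbb R$. On the other side, $\S(\s C)$ is transitive by Proposition \ref{FusSemTrans}, so $\FPdim$ is defined on both semirings and Proposition \ref{fPlaysNiceWFPdims} applies.

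Next I would check that the tensor functor $(F,J)$ induces a morphism of $\mathbb N\Set$-algebras $f:=\S(F)\colon\S(\s C)\to\S(\Vec_{\mathbb K})$. Since $F$ is $\mathbb K$-linear it is additive, so the assignment $X\mapsto[F(X)]$ extends from simple objects to a well-defined monoid homomorphism respecting $\oplus$; the monoidal constraint $J_{X,Y}\colon F(X)\otimes F(Y)\xrightarrow{\sim}F(X\otimes Y)$ makes $f$ multiplicative, and the unit constraint $\1\xrightarrow{\sim}F(\1_{\s C})$ gives $f(1)=[\mathbb K]=1$. Hence $f$ is a unital morphism between transitive, finite-rank $\mathbb N\Set$-algebras, which is exactly the setting of Proposition \ref{fPlaysNiceWFPdims}. (Should the version of that proposition one invokes ask in addition that $f$ be nondegenerate, this holds automatically: if $F(X)=0$ then $F(\1_{\s C})$, being a summand of $F(X\otimes X^*)$, would vanish, contradicting $F(\1_{\s C})\cong\mathbb K$.)

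Finally, for an arbitrary object $X$ of $\s C$, part (1) of Proposition \ref{fPlaysNiceWFPdims} gives $\FPdim(X)=\FPdim\big(f([X])\big)$. But $f([X])=[F(X)]=\dim_{\mathbb K}\!\big(F(X)\big)\cdot[\mathbb K]$ inside $\S(\Vec_{\mathbb K})=\mathbb N$, and by the first step the $\FPdim$ of this element equals $\dim_{\mathbb K}\!\big(F(X)\big)$. Therefore $\FPdim(X)=\dim_{\mathbb K}\!\big(F(X)\big)\in\mathbb N$, as claimed.

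There is no genuine obstacle here; the only steps demanding care are bookkeeping ones — confirming that $[X]\mapsto[F(X)]$ is a bona fide morphism of $\mathbb N\Set$-algebras (well-definedness from additivity of $F$, and multiplicativity and unitality from the monoidal structure $J$) and that $\Vec_{\mathbb K}$ together with $\s C$ meets the transitivity hypothesis required by Proposition \ref{fPlaysNiceWFPdims}. Once these are in place, the conclusion falls out of the triviality of $\FPdim$ on the semiring $\mathbb N$.
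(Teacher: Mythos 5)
Your proof is correct and follows the same route the paper intends: the corollary is stated there without proof, as an immediate consequence of Proposition \ref{fPlaysNiceWFPdims}(1) applied to $f=\S(F)\colon\S(\s C)\to\S(\Vec_{\mathbb K})\cong\mathbb N$, on which $\FPdim$ is visibly the inclusion $\mathbb N\hookrightarrow\mathbb R$. The bookkeeping you supply (transitivity of both semirings, and multiplicativity and unitality of $f$ coming from the tensor structure $J$) is exactly the verification the paper leaves implicit.
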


In order to understand why $\FPdim$ is actually an algebraic integer, we will need a generalization of Proposition \ref{fPlaysNiceWFPdims}.

\begin{proposition}\label{fPlaysNiceButNowWN}
    Let $f:A\to B$ be a dominant morphism in $\mathbb N\Set$, and assume that $A$ and $B$ are transitive $\mathbb N\Set$ algebras so that $\FPdim$s are defined.  If there is some element $D\in A$ such that for any $x,y\in A$,
    \[f(x)f(y)\;=\;f(xDy)\,,\]
    then the following equations hold
    \begin{gather}
        \FPdim\big(f(x)\big)\;=\;\FPdim(D)\cdot\FPdim(x)\,,\\
        f(\s R_A)\;=\;\FPdim(D)\cdot\frac{\FPdim(\s R_A)}{\FPdim(\s R_B)}\cdot\s R_B\,.
    \end{gather}
    
\end{proposition}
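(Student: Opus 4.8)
The plan is to adapt the proof of Proposition \ref{fPlaysNiceWFPdims} (see \cite[Prop 3.3.13]{etingof2015tensor}), with the new element $D$ absorbed using the fact that the regular element $\s R_A$ is a simultaneous eigenvector for left multiplication by every element of $A$. Throughout I would extend $f$, and the hypothesis $f(x)f(y)=f(xDy)$, $\mathbb R$-bilinearly to $\mathbb R\otimes_{\mathbb N}A$ and $\mathbb R\otimes_{\mathbb N}B$, and I would use Proposition \ref{Prop:FPdimProps} freely: that $\FPdim$ is a character on each of $A$ and $B$; that the $\FPdim$ of an element with non-negative coefficients is the Frobenius-Perron eigenvalue $\lambda_{FP}$ of its matrix of left multiplication; and that $a\s R_A=\FPdim(a)\s R_A$ for every $a\in A$, with the analogous statement in $B$. (Also $D\neq 0$, for otherwise the hypothesis forces $f\equiv 0$, contradicting dominance.)

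The computational heart is to substitute $y=\s R_A$ into the hypothesis. Using $D\s R_A=\FPdim(D)\s R_A$ and then $x\s R_A=\FPdim(x)\s R_A$ together with associativity, one gets
\[f(x)\,f(\s R_A)\;=\;f\big(xD\s R_A\big)\;=\;\FPdim(D)\,f\big(x\s R_A\big)\;=\;\FPdim(D)\,\FPdim(x)\cdot f(\s R_A)\,,\]
so $f(\s R_A)$ is an eigenvector of $L_{f(x)}$, the matrix of left multiplication by $f(x)$ in $B$, with eigenvalue $\FPdim(D)\FPdim(x)$. Next I would check that $f(\s R_A)$ has strictly positive coefficients: by dominance every simple of $B$ occurs in $f(a)$ for some simple $a\in A$, and $\s R_A$ is a positive combination of all the simples of $A$, so every simple of $B$ occurs in $f(\s R_A)$. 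Since $f(x)$ has non-negative coefficients, $L_{f(x)}$ is a non-negative matrix possessing a strictly positive eigenvector, so Theorem \ref{FPClassic} identifies that eigenvector's eigenvalue with $\lambda_{FP}(L_{f(x)})=\FPdim(f(x))$. Comparing with the display gives the first claimed equation, $\FPdim(f(x))=\FPdim(D)\FPdim(x)$.

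For the second equation I would produce a strictly positive matrix. Put $w=\sum_{a\in\s O(A)}a$ and $b_0=\sum_{b\in\s O(B)}b$. Transitivity of $B$ makes $L_{b_0}$ strictly positive, since for each simple $b'$ the product $b_0 b'$ contains every simple of $B$ with positive coefficient; and dominance gives $b_0\leq f(w)$, so $L_{f(w)}$ is strictly positive as well. By Theorem \ref{FPClassic} its $\lambda_{FP}$-eigenspace is a line. The vector $f(\s R_A)$ lies in this line, since by the previous paragraph its $L_{f(w)}$-eigenvalue is $\FPdim(D)\FPdim(w)=\FPdim(f(w))=\lambda_{FP}$, and $\s R_B$ lies in it too, having $L_{f(w)}$-eigenvalue $\FPdim(f(w))$ by Proposition \ref{Prop:FPdimProps}. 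Both are strictly positive, so $f(\s R_A)=c\,\s R_B$ for a unique $c>0$. Applying the character $\FPdim$ to both sides, and using the first equation term by term on $\s R_A=\sum_a r_a\,a$, gives $\FPdim(D)\FPdim(\s R_A)=c\,\FPdim(\s R_B)$, which is exactly the asserted value of $c$.

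I expect the main obstacle to be the bookkeeping around positivity rather than anything algebraic: one must ensure that $f(\s R_A)$ is \emph{strictly} positive (which is precisely where dominance is used), and that the matrix $L_{f(w)}$ in the second half is \emph{strictly} positive, so that its Frobenius-Perron eigenspace is one-dimensional and the constant $c$ is forced; the latter relies on transitivity of $B$. The element $D$ itself, which might look like the hard new ingredient, causes no real difficulty: it only ever acts on $\s R_A$, where it collapses to the scalar $\FPdim(D)$.
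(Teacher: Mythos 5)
Your proof is correct and takes essentially the same route as the paper: both arguments hinge on evaluating the hypothesis at $y=\s R_A$ so that $D$ collapses to the scalar $\FPdim(D)$, and on the fact that the matrix of left multiplication by $M=\sum_{a\in\s O_A}f(a)$ (your $L_{f(w)}$) is strictly positive, so its Frobenius--Perron eigenline pins down $f(\s R_A)$ as a positive multiple of $\s R_B$. The only cosmetic difference is that you derive the two displayed equations in the opposite order and are somewhat more explicit about why transitivity of $B$ plus dominance forces strict positivity of that matrix.
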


\begin{proof}
    Set $M=\sum_{x\in\s O_A}f(x)$.  Consider the following computation:
    \begin{align*}
        Mf(\mathcal R_A)\;=\;\left(\sum_{x\in\s O_A}f(x)\right)f(\s R_A)&=\sum_{x\in\s O_A}f\left(xD\s R_A\right)\\
        &=\sum_{x\in\s O_A}f\left(\FPdim(xD)\s R_A\right)\\
        &=\FPdim(D)\cdot\left(\sum_{x\in\s O_A}\FPdim(x)\right)f(\s R_A)\,.
    \end{align*}
    For every simple element $b\in B$, $b\leq M$ by dominance of $f$.  The computation above shows that $f(\s R_A)$ is an eigenvector for left multiplication by $M$, and dominance implies that the coefficients of $f(\s R_A)$ are strictly positive.  Thus by Theorem \ref{FPClassic}, this eigenvector is unique up to scaling.  Since $\s R_B$ is evidently also such an eigenvector, there must be some scalar $r\in \mathbb R$ so that $f(\s R_A)=r\cdot\s R_B$.
    
    Using this proportionality scalar $r$, we can compute that for any $x\in A$,
    \begin{align*}
        \FPdim\big(f(x)\big)\s R_{B}&=f(x)\s R_B\\
        &=\tfrac1rf(x)f(\s R_A)\\
        &=\tfrac1rf(xD\s R_A)\\
        &=\tfrac1rf\big(\FPdim(xD)\s R_A\big)\\
        &=\FPdim(x)\FPdim(D)\tfrac1rf\big(\s R_A\big)\\
        &=\FPdim(x)\FPdim(D)\s R_B\,.
    \end{align*}
    By comparing the coefficients of $1_B$ in the above, we deduce that
    \[\FPdim\big(f(x)\big)\;=\;\FPdim(D)\cdot\FPdim(x)\,,\]
    which proves the first claim.  The second claim now follows from taking the $\FPdim$ of both sides of $f(\s R_A)=r\cdot\s R_B$ and applying part 1.
\end{proof}

\begin{example}\label{FreeBimoduleEg}
    Let $D$ be an indecomposable separable algebra object in a fusion category $\s C$ over $\mathbb K$.  Since $D$ is separable, the category of bimodules $\,_D\s C_D$ is also fusion.  For such an algebra, the free bimodule functor
    \begin{align*}
        F:\s C&\longrightarrow\;_D\s C_D\;,\\
        X&\longmapsto D\otimes X\otimes D\;,
    \end{align*}
    satisfies the property that
    \[FX\otimes_{D}FY\cong F(X\otimes D\otimes Y)\,.\]
    This implies that $f:=\S(F)$ satisfies the hypotheses of Proposition \ref{fPlaysNiceButNowWN}.
    
    This example is the exact situation that motivated the modification from Proposition \ref{fPlaysNiceWFPdims} to Proposition \ref{fPlaysNiceButNowWN}.  It should be noted that when working in this setting, the notation $\FPdim(D)$ is ambiguous.  Clearly $D$ is the monoidal unit in the category of $D$-bimodules, and hence $\FPdim(D)=1$ when we think of $D$ as a bimodule.  Thus we should be careful in this setting to write $\FPdim_{\,_D\s C_D}(D)$ or $\FPdim_{\s C}(D)$ as appropriate.
\end{example}

From Proposition \ref{fPlaysNiceButNowWN}, we can derive an important formula for computing the effect of adjoints on $\FPdim$s.  This next proposition is an analogue of \cite[Lemma 6.2.4]{MR3242743}.  The difference between our proposition and their lemma is that we only deal with the fusion case, but we allow for both $D\neq\1$ and a non-algebraically closed base field.

\begin{proposition}\label{Prop:FPdimsOfAdjoints}
    Let $A=\S(\s A)$ and $B=\S(\s B)$ be fusion semirings, and suppose $F:\s A\to\s B$ is a dominant functor such that $\S(F)=f$ satisfies the hypotheses of Proposition \ref{fPlaysNiceButNowWN} for some object $D$ in $\s A$.  Suppose further that $I:\s B\to \s A$ is a left or a right adjoint to $F$.  In this setting, the following formula holds for any $X$ in $\s B$
    \[\FPdim\big(I(X)\big)\;=\;\FPdim(D)\cdot\frac{d_{\s B}}{d_{\s A}}\cdot\frac{\FPdim(\s A)}{\FPdim(\s B)}\cdot\FPdim(X)\,,\]
    where $d_{\s A}$ and $d_{\s B}$ are the endomorphism degrees of $\s A$ and $\s B$ respectively.
\end{proposition}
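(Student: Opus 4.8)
The plan is to exploit the standard adjunction trick relating $\FPdim(I(X))$ to Hom-space pairings, and then convert those Hom-space dimensions into $\FPdim$s using the formula from Theorem~\ref{Thm:NewRegular} together with Proposition~\ref{fPlaysNiceButNowWN}. Concretely, I would compute $\FPdim(I(X))$ by pairing $I(X)$ against the regular element. Recall that for a transitive $\mathbb N\mathcal O$ algebra, $\FPdim$ is the unique character with positive values on $\mathcal O$; and from Theorem~\ref{Thm:NewRegular}, the coefficient of $[1]$ in $x\cdot\s R$ is $\FPdim(x)$ after the preferred normalization. So the first step is to express $\FPdim(I(X))$ as a suitable coefficient extracted from $I(X)\cdot\s R_{\s A}$, or equivalently as the $\mathbb E_{\s A}$-dimension of a Hom-space summed against $\FPdim$s over $\mathcal O(\s A)$.

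\textbf{Step by step.} First I would use the adjunction: if $I$ is (say) right adjoint to $F$, then for every simple $Y$ in $\s A$, $\dim_{\mathbb E_{\s A}}\Hom_{\s A}(Y, I(X)) = \dim_{\mathbb E_{\s A}}\Hom_{\s B}(F(Y), X)$ — but one must be careful, because the endomorphism fields $\mathbb E_{\s A}$ and $\mathbb E_{\s B}$ differ, and the Hom-space on the right is naturally an $\mathbb E_{\s B}$-module. This is exactly where the ratio $d_{\s B}/d_{\s A}$ will enter: a $\mathbb K$-vector space that is an $\mathbb E_{\s B}$-module of $\mathbb E_{\s B}$-dimension $m$ has $\mathbb E_{\s A}$-dimension (or rather $\mathbb K$-dimension $m\cdot d_{\s B}$), and one rebalances by the appropriate degree factors. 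Second, I would write $\FPdim(I(X)) = \sum_{Y\in\mathcal O(\s A)} \frac{\FPdim(Y)}{\dim_{\mathbb E_{\s A}}\End(Y)}\cdot(\text{multiplicity of }Y\text{ in }I(X))$-type expression coming from pairing with $\s R_{\s A}$, then substitute the adjunction identity to replace everything by data about $F(Y)$ and $X$ in $\s B$. Third, I would recognize the resulting sum over $\mathcal O(\s A)$ as $\FPdim$ applied to $f(\text{something})$ or to a pairing involving $f(\s R_{\s A})$, and invoke Proposition~\ref{fPlaysNiceButNowWN} — specifically $f(\s R_{\s A}) = \FPdim(D)\cdot\frac{\FPdim(\s A)}{\FPdim(\s B)}\cdot\s R_{\s B}$ — to collapse it. The $\FPdim(D)$ and $\frac{\FPdim(\s A)}{\FPdim(\s B)}$ factors fall out of this substitution, and the final $\FPdim(X)$ appears from pairing $X$ against $\s R_{\s B}$.

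\textbf{The main obstacle} will be bookkeeping the two different endomorphism fields correctly. The dimension counts $\dim_{\mathbb E}\End(-)$ appearing in the regular element of $\s A$ are taken over $\mathbb E(\s A)$, while those for $\s B$ are over $\mathbb E(\s B)$; the adjunction isomorphism of Hom-spaces is only a $\mathbb K$-linear (indeed at best $\mathbb E(\s B)$-linear, since $F$ need not be $\mathbb E$-linear in the nAC setting — cf. Example~\ref{Eg:CCbimContd}) identification, and getting the $d_{\s B}/d_{\s A}$ factor to come out with the right exponent requires care about whether we count dimensions over $\mathbb K$, over $\mathbb E(\s A)$, or over $\mathbb E(\s B)$ at each stage. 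I expect the cleanest route is to do all intermediate computations over $\mathbb K$ (so $\dim_{\mathbb K}\End(Y) = d_{\s A}\cdot\dim_{\mathbb E(\s A)}\End(Y)$ and similarly for $\s B$), carry the bare factors of $d_{\s A}$ and $d_{\s B}$ along explicitly, and only reconstitute the $\mathbb E$-relative quantities at the very end to match the statement. The left-adjoint case should be symmetric, using $\Hom_{\s A}(I(X),Y)\cong\Hom_{\s B}(X,F(Y))$ and Proposition~\ref{Prop:FPdimOfXDual} to handle the resulting duals, so I would prove one case in detail and remark that the other is analogous.
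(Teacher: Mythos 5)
Your proposal is correct and follows essentially the same route as the paper's proof: expand $\FPdim(I(X))$ as a sum over simples of $\s A$ weighted by $\FPdim(V)/\dim_{\mathbb E}\End(V)$, pass Hom-spaces across the adjunction, recognize the resulting expression as the pairing $\big\langle[X],f(\s R_{\s A})\big\rangle$ (up to the factor $1/d_{\s A}$ from converting $\mathbb E(\s A)$-dimensions to $\mathbb K$-dimensions), apply Proposition~\ref{fPlaysNiceButNowWN}, and pair against $\s R_{\s B}$ to recover $\FPdim(X)$ together with the factor $d_{\s B}$. Your anticipated bookkeeping of the two endomorphism fields via $\mathbb K$-dimensions is exactly how the paper produces the ratio $d_{\s B}/d_{\s A}$, and the paper likewise handles the other adjoint by swapping Hom arguments using semisimplicity.
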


First we develop some notation, then proceed to the proof.

\begin{definition}
    Let $\s C$ be a multifusion category over $\mathbb K$.  For any two objects $X$ and $Y$ in $\s C$, the pairing
    \[(X,Y)\longmapsto\dim_{\mathbb K}\Hom(X,Y)\]
    is constant on isomorphism classes, and sends direct sums in either argument to addition in $\mathbb N$.  It follows that $\dim_{\mathbb K}\Hom$ descends to an $\mathbb N$-bilinear pairing on the $\mathbb N\Set$ algebra $\S(\s C)$.  This pairing further induces a well-defined $\mathbb R$-bilinear pairing $\langle-\,,\,-\rangle$ on the real Grothendieck ring $\mathbb R\otimes_{\mathbb N}\S(\s C)$ by the following formula
    \[\left\langle\sum_Xr_X[X]\,,\,\sum_Yr_Y[Y]\right\rangle:=\sum_{X,Y}r_Xr_Y\dim_{\mathbb K}\Hom(X,Y)\,.\]
\end{definition}

\begin{proof}[Proof of Proposition \ref{Prop:FPdimsOfAdjoints}]
    Suppose that $I:\s B\to\s A$ is left adjoint to $F$ and that $X$ is a simple object of $\s B$.  We can compute
    \begin{align*}
        \FPdim(I(X))&=
        \sum_{V\in\s O_{\s A}}\FPdim\left(V\right)\cdot\frac{\dim_{\mathbb E(\s A)}\Hom(I(X),V)}{\epsilon_V}\\
        &=\sum_{V\in\s O_{\s A}}\frac{\FPdim\left(V\right)}{\epsilon_V}\cdot\frac{1}{d_{\s A}}\dim_{\mathbb K}\Hom(I(X),V)\\
        &=\sum_{V\in\s O_{\s A}}\frac{\FPdim\left(V\right)}{\epsilon_V}\cdot\frac{1}{d_{\s A}}\dim_{\mathbb K}\Hom(X,FV)\\
        &=\sum_{V\in\s O_{\s A}}\frac{\FPdim\left(V\right)}{\epsilon_V}\cdot\frac{1}{d_{\s A}}\big\langle[X]\,,\,f[V]\big\rangle\\
        &=\frac{1}{d_{\s A}}\left\langle[X]\,,\,f\left(\sum\limits_{V\in\s O_{\s A}}\frac{\FPdim\left(V\right)}{\epsilon_V}[V]\right)\right\rangle\\
        &=\frac{1}{d_{\s A}}\big\langle[X]\,,\,f\left(\s R_{\s B}\right)\big\rangle\\
        &\mathop{=}\limits^{\ref{fPlaysNiceButNowWN}}\frac{1}{d_{\s A}}\left\langle[X]\,,\,\FPdim(D)\cdot\frac{\FPdim(\s A)}{\FPdim(\s B)}\s R_{\s A}\right\rangle\\
        &=\frac{\FPdim(D)}{d_{\s A}}\cdot\frac{\FPdim(\s A)}{\FPdim(\s B)}\big\langle[X]\,,\,\s R_{\s A}\big\rangle\;=\;\cdots
    \end{align*}
    \begin{align*}
        &=\frac{\FPdim(D)}{d_{\s A}}\cdot\frac{\FPdim(\s A)}{\FPdim(\s B)}\left\langle[X]\,,\,\s \displaystyle\sum_{W\in\s O_{\s B}}\frac{\FPdim(W)}{\epsilon_W}[W]\right\rangle\\
        &=\frac{\FPdim(D)}{d_{\s A}}\cdot\frac{\FPdim(\s A)}{\FPdim(\s B)}\cdot\s \displaystyle\sum_{W\in\s O_{\s B}}\frac{\FPdim(W)}{\epsilon_W}\cdot\big\langle[X]\,,\,[W]\big\rangle\\
        &=\frac{\FPdim(D)}{d_{\s A}}\cdot\frac{\FPdim(\s A)}{\FPdim(\s B)}\cdot\frac{\FPdim(X)}{\dim_{\mathbb E}\End(X)}\cdot(d_{\s B}\cdot\dim_{\mathbb E}\End(X))\\
        &=\FPdim(D)\cdot\frac{d_{\s B}}{d_{\s A}}\cdot\frac{\FPdim(\s A)}{\FPdim(\s B)}\cdot\FPdim(X)\,.
    \end{align*}
    The fact that this holds for arbitrary $X$ follows from semisimplicity.  Any of the hom spaces in the above calculation could have had their arguments swapped without changing any of the dimensions, again by semisimplicity.  By swapping the direction of the homs, the above computation can be used to show the corresponding claim regarding right adjoints instead.
\end{proof}

In the AC setting, the $\FPdim$ of fusion categories is known to be a Morita invariant (see e.g. \cite{MR3242743}[Cor 7.16.7]).  In the nAC setting, the previous propositions point towards a suitable analogue.

\begin{theorem}\label{FPdimsNewMoritaInvariant}
    If $\s C$ and $\s D$ are Morita equivalent fusion categories over $\mathbb K$, then
    \[\frac{\FPdim(\s C)}{d_{\s C}}\;=\;\frac{\FPdim(\s D)}{d_{\s D}}\,.\]
\end{theorem}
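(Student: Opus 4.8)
The plan is to deduce Theorem \ref{FPdimsNewMoritaInvariant} from the adjoint formula in Proposition \ref{Prop:FPdimsOfAdjoints}. Recall that $\s C$ and $\s D$ being Morita equivalent means there is an invertible $\s C$-$\s D$-bimodule category $\s M$; equivalently, there is an indecomposable separable algebra $A$ in $\s C$ such that $\s D \simeq {}_A\s C_A$ as fusion categories over $\mathbb K$. So first I would reduce to this algebraic description: it suffices to prove the identity when $\s D = {}_A\s C_A$ for an indecomposable separable algebra $A$, since Morita equivalence is generated by such moves (and the relation to be proved is manifestly an equivalence relation).

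With that reduction in place, the key player is the free bimodule functor $F:\s C \to {}_A\s C_A$, $X \mapsto A \otimes X \otimes A$, from Example \ref{FreeBimoduleEg}. As recorded there, $\S(F) = f$ satisfies the hypotheses of Proposition \ref{fPlaysNiceButNowWN} with $D = A$ (regarded as an object of $\s C$, i.e. the distinguished element $D$ with $f(x)f(y) = f(xDy)$), and $F$ is dominant. The free bimodule functor has the forgetful functor $I: {}_A\s C_A \to \s C$ as both a left and a right adjoint (since $A$ is separable, hence the forgetful functor is Frobenius). Now I apply Proposition \ref{Prop:FPdimsOfAdjoints} with $\s A = \s C$, $\s B = {}_A\s C_A = \s D$, and the object $D = A$: for any $X$ in $\s D$,
\[
\FPdim_{\s C}\big(I(X)\big) \;=\; \FPdim_{\s C}(A) \cdot \frac{d_{\s D}}{d_{\s C}} \cdot \frac{\FPdim(\s C)}{\FPdim(\s D)} \cdot \FPdim_{\s D}(X)\,.
\]
The strategy is to evaluate this at a cleverly chosen $X$ so that both sides are computable independently. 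The natural choice is the monoidal unit $X = A$ of $\s D = {}_A\s C_A$, for which $\FPdim_{\s D}(A) = 1$ by Proposition \ref{Prop:FPdimInvertible1}, while $I(A) = A$ as an object of $\s C$, so the left side is just $\FPdim_{\s C}(A)$. Plugging in gives
\[
\FPdim_{\s C}(A) \;=\; \FPdim_{\s C}(A) \cdot \frac{d_{\s D}}{d_{\s C}} \cdot \frac{\FPdim(\s C)}{\FPdim(\s D)}\,,
\]
and since $\FPdim_{\s C}(A) > 0$ (Proposition \ref{Prop:FPdimGeq1}, extended additively), we may cancel it to obtain $\dfrac{d_{\s D}}{d_{\s C}} \cdot \dfrac{\FPdim(\s C)}{\FPdim(\s D)} = 1$, which rearranges to the claimed equality $\FPdim(\s C)/d_{\s C} = \FPdim(\s D)/d_{\s D}$.

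The main obstacle I anticipate is not the computation above — which is essentially a one-line consequence of the machinery — but the bookkeeping needed to make the reduction to the bimodule case rigorous in the nAC setting. Specifically, one must be careful that: (i) the Morita equivalence really is implemented by an indecomposable separable (not merely semisimple) algebra $A$, so that ${}_A\s C_A$ is genuinely fusion over $\mathbb K$ (cf. the discussion following Definition \ref{Def:mFus}); (ii) the forgetful functor ${}_A\s C_A \to \s C$ is two-sided adjoint to the free functor, which relies on separability of $A$ to get the Frobenius structure; and (iii) the endomorphism degree $d_{\s D}$ of the bimodule category is the quantity appearing in Proposition \ref{Prop:FPdimsOfAdjoints}, with $\mathbb E(\s D) = \End_{{}_A\s C_A}(A)$ being a field extension of $\mathbb K$ whose degree may differ from $d_{\s C}$ — this is precisely the phenomenon the theorem quantifies. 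Once these points are settled, the $A = \1_{\s D}$ trick disposes of the rest.
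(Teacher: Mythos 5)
Your proposal is correct and follows essentially the same route as the paper: realize the Morita equivalence via an indecomposable separable algebra so that $\s D\simeq\;_A\s C_A$, apply Proposition \ref{Prop:FPdimsOfAdjoints} to the forgetful functor (adjoint to the free bimodule functor of Example \ref{FreeBimoduleEg}), evaluate at the monoidal unit $A$ of the bimodule category, and cancel $\FPdim_{\s C}(A)$. The only cosmetic differences are that the paper invokes only the one-sided (right) adjunction rather than the Frobenius two-sided adjoint, and does not need to cite Proposition \ref{Prop:FPdimInvertible1} for $\FPdim_{\s D}(A)=1$ since $A$ is the unit.
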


\begin{proof}
    We start off by realizing the Morita equivalence between the two categories as $\s C_D$ for some separable algebra object $D$ in $\s C$.  This allows us to identify $\s D\simeq\;_D\s C_D$ as fusion categories.  The forgetful functor $U:\;_D\s C_D\to\s C$ is right adjoint to the free bimodule functor of Example \ref{FreeBimoduleEg}.  Thus, by Proposition \ref{Prop:FPdimsOfAdjoints}, we discover that for any $D$-bimodule $X$,
    \[\FPdim\big(U(X)\big)\;=\;\FPdim_{\s C}(D)\cdot\frac{d_{(\,_D\s C_D)}}{d_{\s C}}\cdot\frac{\FPdim(\s C)}{\FPdim(\,_D\s C_D)}\cdot\FPdim(X)\,.\]
    Setting $X=D$ the monoidal unit in the category of bimodules, the above formula produces
    \begin{gather*}
        \FPdim_{\s C}(D)\;=\;\FPdim_{\s C}(D)\cdot\frac{d_{\,_D\s C_D}}{d_{\s C}}\cdot\frac{\FPdim(\s C)}{\FPdim(\,_D\s C_D)}\cdot1\;\,.\\
    \end{gather*}
        After cancelling like terms and rearranging, we arrive at
    \begin{gather*}
        \frac{\FPdim(\s C)}{d_{\s C}}\;=\;\frac{\FPdim(\,_D\s C_D)}{d_{(\,_D\s C_D)}}\;=\;\frac{\FPdim(\s D)}{d_{\s D}}\,.\qedhere
    \end{gather*}
\end{proof}

\begin{corollary}\label{SimplifiedForgottenBimCor}
    Let $\s C$ be fusion and $D$ be an indecomposable separable algebra object in $\s C$, and let $X$ be a $D$-bimodule.  Then
    \[\FPdim\big(U(X)\big)\;=\;\FPdim_{\s C}(D)\cdot\FPdim(X)\,.\]
\end{corollary}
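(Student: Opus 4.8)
The plan is to specialize Theorem \ref{FPdimsNewMoritaInvariant} together with Proposition \ref{Prop:FPdimsOfAdjoints} to the concrete situation at hand. Observe that the forgetful functor $U:{}_D\s C_D\to\s C$ is right adjoint to the free bimodule functor $F$ of Example \ref{FreeBimoduleEg}, and that $\S(F)=f$ satisfies the hypotheses of Proposition \ref{fPlaysNiceButNowWN} with the very object $D$ (viewed in $\s C$). Hence Proposition \ref{Prop:FPdimsOfAdjoints} applies with $\s A=\s C$, $\s B={}_D\s C_D$, and $I=U$, giving
\[
\FPdim\big(U(X)\big)\;=\;\FPdim_{\s C}(D)\cdot\frac{d_{({}_D\s C_D)}}{d_{\s C}}\cdot\frac{\FPdim(\s C)}{\FPdim({}_D\s C_D)}\cdot\FPdim(X)\,.
\]
So the entire content of the corollary is that the middle two factors multiply to $1$, i.e. that
\[
\frac{d_{({}_D\s C_D)}}{d_{\s C}}\cdot\frac{\FPdim(\s C)}{\FPdim({}_D\s C_D)}\;=\;1\,,
\]
which is exactly the statement $\FPdim(\s C)/d_{\s C}=\FPdim({}_D\s C_D)/d_{({}_D\s C_D)}$ established in Theorem \ref{FPdimsNewMoritaInvariant} (using that $\s C$ and ${}_D\s C_D$ are Morita equivalent, as $D$ is an indecomposable separable algebra object).

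Concretely, the steps I would carry out are: first invoke Example \ref{FreeBimoduleEg} to note that $f=\S(F)$ meets the hypotheses of Proposition \ref{fPlaysNiceButNowWN}; second cite that $U$ is right adjoint to $F$ (this is the standard free–forget adjunction for the relative tensor product, already used in the proof of Theorem \ref{FPdimsNewMoritaInvariant}); third plug these into Proposition \ref{Prop:FPdimsOfAdjoints} to obtain the displayed formula for $\FPdim(U(X))$; and fourth rewrite the ratio $\frac{d_{({}_D\s C_D)}}{d_{\s C}}\cdot\frac{\FPdim(\s C)}{\FPdim({}_D\s C_D)}$ and apply Theorem \ref{FPdimsNewMoritaInvariant} to see it equals $1$, leaving $\FPdim(U(X))=\FPdim_{\s C}(D)\cdot\FPdim(X)$.

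There is really no main obstacle here — this corollary is a bookkeeping consequence of two results that have already been proved. The only point requiring a modicum of care is dimensional hygiene around the symbol $\FPdim(D)$, which as Example \ref{FreeBimoduleEg} warns is ambiguous: in Proposition \ref{Prop:FPdimsOfAdjoints} the relevant quantity is $\FPdim_{\s C}(D)$ (the Frobenius–Perron dimension of $D$ as an object of the ambient category $\s C$, not as the monoidal unit of ${}_D\s C_D$), and the statement of the corollary should be read with that convention, which is why I have written $\FPdim_{\s C}(D)$ throughout. With that disambiguation fixed, the chain of citations above completes the argument.
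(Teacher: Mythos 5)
Your proposal is correct and is exactly the argument the paper intends: the corollary is the adjoint formula of Proposition \ref{Prop:FPdimsOfAdjoints} applied to the free--forget adjunction of Example \ref{FreeBimoduleEg}, with the middle factors cancelled via Theorem \ref{FPdimsNewMoritaInvariant}. Your remark about reading $\FPdim(D)$ as $\FPdim_{\s C}(D)$ matches the paper's own notational convention.
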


\begin{example}
    The category $(\mathbb C,\mathbb C)\Bim$ of Example \ref{Eg:CCbim} is evidently Morita equivalent to $\Vec_{\mathbb R}$.  For these small categories, Theorem \ref{FPdimsNewMoritaInvariant} is easily verified:
    \[\frac{\FPdim(\Vec_{\mathbb R})}{d_{\Vec_{\mathbb R}}}\;=\;\frac{1^2}{1}\;=\;\frac{1^2+1^2}{2}\;=\;\frac{\FPdim\big((\mathbb C,\mathbb C)\Bim\big)}{d_{(\mathbb C,\mathbb C)\Bim}}\,.\]
\end{example}

Another useful consequence of Proposition \ref{Prop:FPdimsOfAdjoints} is that it allows us to compute the $\FPdim$s of relative tensor products.

\begin{proposition}\label{FPdimsOfRelativeTensorProds}
    Let $D$ be an indecomposable, separable algebra in a fusion category $\s C$ over $\mathbb K$, and let $M$ and $N$ be right and left modules for $D$ respectively.  Then
    \[\FPdim(M\otimes_{D}N)\;=\;\frac{\FPdim(M)\cdot\FPdim(N)}{\FPdim(D)}\,.\]
\end{proposition}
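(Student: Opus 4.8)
The plan is to realize $\FPdim(M \otimes_D N)$ via the composite of two functors between fusion categories whose effect on $\FPdim$ we already control. The key observation is that the operation $N \mapsto M \otimes_D N$ can be analyzed through the lens of Proposition \ref{Prop:FPdimsOfAdjoints}, but there is a cleaner route: fix $N$ a left $D$-module and consider the category of right $D$-modules $\s C_D$, which is fusion since $D$ is separable. Relative tensor with $N$ gives a functor $\s C_D \to \s C$, but more useful is to work with the forgetful and free-module functors. The forgetful functor $U: \s C_D \to \s C$ is right adjoint to the free module functor $X \mapsto X \otimes D$, and these fit the hypotheses of Proposition \ref{fPlaysNiceButNowWN} with the relevant $D$ (this is the one-sided analogue of Example \ref{FreeBimoduleEg}). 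So by Corollary \ref{SimplifiedForgottenBimCor}, applied in the one-sided module setting, $\FPdim\big(U(X)\big) = \FPdim_{\s C}(D)\cdot\FPdim_{\s C_D}(X)$ for any right $D$-module $X$.

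First I would reduce to computing $\FPdim_{\s C}$ of the underlying object. We have $M \otimes_D N$ is an object of $\s C$, and the standard fact is that when $M$ is a right $D$-module, $M \otimes_D (D \otimes Y) \cong M \otimes Y$ for $Y \in \s C$; that is, relative tensoring $M$ against the free left module on $Y$ just recovers the ordinary tensor $M \otimes Y$. Dually, since $N$ is a left $D$-module, it is a quotient (in fact a direct summand, by separability) of the free left module $D \otimes N$, so $M \otimes_D N$ is a summand of $M \otimes_D (D \otimes N) \cong M \otimes N$. To get the exact value rather than an inequality, I would instead use the adjunction more carefully: $\Hom_{\s C}(M \otimes_D N, Z) \cong \Hom_{D\text{-mod}}(N, \underline{\Hom}(M,Z))$ or, better, express everything through $\FPdim$ as a character (Proposition \ref{Prop:FPdimProps}(1)) and use that $\FPdim$ is multiplicative.

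The cleanest execution: view $M$ as fixed and consider the functor $G_M: {}_D\s C \to \s C$, $N \mapsto M \otimes_D N$, which is the composite of the equivalence ${}_D\s C \simeq \s C_{D}$ (via duality, using separability) — actually simpler is to observe $G_M$ has a right adjoint $Z \mapsto \underline{\Hom}_{\s C}(M, Z)$ with its natural left $D$-module structure. Then I would compute $\FPdim(M \otimes_D N)$ by pairing with the regular element and using Proposition \ref{Prop:FPdimsOfAdjoints}-style manipulations: the point is that $G_M$ satisfies a relation $G_M(P) \otimes (\text{something}) = G_M(P \otimes_D Q \cdots)$ allowing the $\FPdim(D)$ denominator to emerge. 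Concretely, I expect: $\FPdim_{\s C}(M \otimes_D N) = \FPdim_{{}_D\s C}(N) \cdot \FPdim_{\s C}(M) / \FPdim_{\s C}(D)$ combined with the fact (from the one-sided version of Corollary \ref{SimplifiedForgottenBimCor}) that $\FPdim_{\s C}(M) = \FPdim_{\s C}(D) \cdot \FPdim_{\s C_D}(M)$ and similarly for $N$, so that when we express the answer purely in terms of underlying $\FPdim$s in $\s C$ we land on $\FPdim(M)\FPdim(N)/\FPdim(D)$.

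The main obstacle I anticipate is bookkeeping the several different ambient categories and their $\FPdim$-normalizations: $\FPdim$ computed in $\s C$, in $\s C_D$, in ${}_D\s C$, and the endomorphism-degree factors $d_{(-)}$ that appear in Proposition \ref{Prop:FPdimsOfAdjoints}. In particular one must check that the endomorphism fields of $\s C_D$ and ${}_D\s C$ both equal $\mathbb E(\s C)$ (true because $D$ is indecomposable as an algebra, so its category of modules has simple unit with the same endomorphism field — but this deserves an explicit sentence), so that the $d_{\s B}/d_{\s A}$ ratios are trivial and do not pollute the formula. Once that is pinned down, the identity follows by assembling Corollary \ref{SimplifiedForgottenBimCor} (one-sided form) with the elementary isomorphism $M \otimes_D (D \otimes Y) \cong M \otimes Y$ and multiplicativity of the $\FPdim$ character; I would present it as: apply $\FPdim$ to $M \otimes_D N$, write $N$ as a summand of $D \otimes N$ via the separability splitting, track coefficients, and divide out the $\FPdim(D)$ introduced by the free-module adjunction.
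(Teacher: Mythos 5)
There is a genuine gap at the crux of your argument. Everything hinges on a ``one-sided version of Corollary \ref{SimplifiedForgottenBimCor}'', i.e.\ a formula $\FPdim_{\s C}(M)=\FPdim_{\s C}(D)\cdot\FPdim_{\s C_D}(M)$, but the quantity $\FPdim_{\s C_D}(M)$ is not defined by anything in this paper: $\s C_D$ and $_D\s C$ are module categories, not monoidal categories, so $\S(\s C_D)$ is not a transitive $\mathbb N\Set$ \emph{algebra} and Definition \ref{DefFPdims} does not apply to it. For the same reason Propositions \ref{fPlaysNiceButNowWN} and \ref{Prop:FPdimsOfAdjoints} cannot be invoked for the free/forgetful adjunction between $\s C$ and $\s C_D$ --- both results require fusion semirings on both sides. (One could develop Frobenius--Perron dimensions for module categories, but that requires choosing and pinning down the normalization of an FP eigenvector, which is precisely the extra work your sketch elides.) Your fallback --- write $N$ as a summand of $D\otimes N$ by separability, so that $M\otimes_DN$ is a summand of $M\otimes_D(D\otimes N)\cong M\otimes N$ --- only yields the inequality $\FPdim(M\otimes_DN)\leq\FPdim(M)\FPdim(N)$; ``tracking coefficients'' of the complementary summand is exactly the hard part and is not addressed. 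The worry about endomorphism degrees is a red herring: in Corollary \ref{SimplifiedForgottenBimCor} those factors have already cancelled.

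The fix is to stay entirely inside genuine fusion categories, which is what the paper does. Promote both modules to free bimodules $D\otimes M$ and $N\otimes D$ in $\,_D\s C_D$ (which is fusion since $D$ is separable), and observe that $(D\otimes M)\otimes_D(N\otimes D)\cong D\otimes(M\otimes_DN)\otimes D$, whose $\FPdim$ in $\s C$ is $\FPdim(D)^2\FPdim(M\otimes_DN)$ by multiplicativity of the character. On the other hand $\otimes_D$ is the monoidal product of $\,_D\s C_D$, so $\FPdim_{bim}$ is multiplicative across it by Proposition \ref{Prop:FPdimProps}; converting each of the three bimodule dimensions back to dimensions in $\s C$ via Corollary \ref{SimplifiedForgottenBimCor} (which genuinely concerns bimodules) gives $\FPdim(D)\cdot\FPdim(M)\cdot\FPdim(N)$, and dividing by $\FPdim(D)^2$ finishes. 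Your instinct to route the computation through the free-module construction and Corollary \ref{SimplifiedForgottenBimCor} was right; the missing idea is to free up \emph{both} sides so that the relative tensor product becomes a monoidal product in a fusion category where $\FPdim$ is already known to be multiplicative.
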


\begin{proof}
    The hypotheses guarantee that $\,_D\s C_D$ is fusion, so that $\FPdim$s are well-defined.  As suggested in Example \ref{FreeBimoduleEg}, it will be convenient to introduce subscripts in order to tell bimodules from bare objects.  Let us write $\FPdim$ for objects in $\s C$, and $\FPdim_{bim}$ for $D$-bimodules.  We compute
    \begin{align*}
        \FPdim(M\otimes_DN)\cdot\FPdim(D)^2&=\FPdim\big((D\otimes M)\otimes_D(N\otimes D)\big)\\
        &\hspace{-5pt}\mathop{=}\limits^{\ref{SimplifiedForgottenBimCor}}\FPdim(D)\cdot\FPdim_{bim}\big((D\otimes M)\otimes_D(N\otimes D)\big)\\
        &=\FPdim(D)\cdot\FPdim_{bim}(D\otimes M)\cdot\FPdim_{bim}(N\otimes D)\\
        &\hspace{-5pt}\mathop{=}\limits^{\ref{SimplifiedForgottenBimCor}}\FPdim(D)\cdot\frac{\FPdim(D\otimes M)}{\FPdim(D)}\cdot\frac{\FPdim(N\otimes D)}{\FPdim(D)}\\
        &=\FPdim(D)\cdot\FPdim(M)\cdot\FPdim(N)\,.
    \end{align*}
    The claim then follows by dividing both sides by $\FPdim(D)^2$.
\end{proof}

\begin{example}
    Using the notation of Example \ref{NonNormalBim}, it follows from Proposition \ref{FPdimsOfRelativeTensorProds} that as objects in $\Vec_{\mathbb Q}$,
    \[\FPdim\big(\mathbb J(\omega)\otimes_{\mathbb J}\mathbb J(\omega)\big)=\frac{6\cdot 6}{3}\;=\;12\,.\]
    Using the Chinese remainder theorem, it can be verified that as $\mathbb J$-bimodules,
    \[\mathbb J(\omega)\otimes_{\mathbb J}\mathbb J(\omega)\cong\mathbb J\oplus\mathbb J\oplus\mathbb J(\omega)\;\mathop{\longmapsto}\limits^{\FPdim}\;3+3+6\;=\;12\;.\]
    Thus Proposition \ref{FPdimsOfRelativeTensorProds} can serve as a nice sanity check when performing such computations.
\end{example}

\begin{remark}
    The hypothesis that $D$ is separable is not superfluous.  Let $\s C=\Vec_{\mathbb K}$ and let $D$ be the algebra spanned by the vectors $1$ and $t$ subject to the relation $t^2=0$.  This algebra is not semisimple and therefore not separable.  Consider the right $D$-module $M=\langle m\rangle$ where $m\triangleleft t:=0$ and the left module $N=\langle n\rangle$ where $t\triangleright n:=0$.  The relative tensor product $M\otimes_{D}N$ is easily seen to be dimension 1 over $\mathbb K$.
    
    The author expects that there is a related, more complicated formula when the algebra is allowed to be nonsemisimple.
\end{remark}

As a more serious application of Proposition \ref{FPdimsOfRelativeTensorProds}, we turn to the task of showing that $\FPdim(\s C)$ is an algebraic integer.  The strategy is to show that there is an object $R$ in $\s C$ such that $\FPdim(R)=\FPdim(\s C)$, and then simply observe that the $\FPdim$s of any object are always algebraic integers by Proposition \ref{Prop:FPdimGeq1} part 1.

\begin{definition}
    Let $\s C$ be a fusion category over $\mathbb K$.  The canonical coend of $\s C$ is the object
    \[R:=\int^{V\in \s C}V^*\otimes V\cong\bigoplus_{X\in\s O(\s C)}\frac{X^*\otimes X}{\left\langle\big\{\,f^*\otimes\id_{X}-\id_{X^*}\otimes f\;|\;f:X\to X\,\big\}\right\rangle}\,.\]
\end{definition}

Given a simple object $X$ in $\s C$, the left embedding $\lambda_X:\mathbb E\hookrightarrow\End(X)$ shows that $\End(X)$ is an $\mathbb E$ algebra.  In other words, $\End(X)$ is an algebra object in the category $\Vec_{\mathbb E}=\langle\1\rangle\subseteq\s C$.  The identity on $\End(X)$ gives rise to a left $\End(X)$-module structure on $X$.  Similarly there is a right $\End(X)$-module structure on $X^*$.  These actions allow for another formulation of the canonical coend:

\[R\cong\bigoplus_{X\in\s O(\s C)}X^*\otimes_{\End(X)}X\,.\]

Using this formulation, we can prove the following.

\begin{theorem}\label{Thm:FPdimRIsFPdimC}
    The $\FPdim$ of a fusion category $\s C$ over $\mathbb K$ is given by the $\FPdim$ of its canonical coend $R$.  In particular, $\FPdim(\s C)$ is necessarily an algebraic integer.
\end{theorem}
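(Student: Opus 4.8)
The plan is to compute $\FPdim(R)$ directly from the decomposition $R\cong\bigoplus_{X\in\s O(\s C)}X^*\otimes_{\End(X)}X$ recorded just above the statement, one summand at a time, and then to recognize the answer as the sum defining $\FPdim(\s C)$. Since $\FPdim$ is a semiring character on $\S(\s C)$ by Proposition \ref{Prop:FPdimProps} part 1, it is additive over direct sums, so it suffices to compute $\FPdim\big(X^*\otimes_{\End(X)}X\big)$ for each simple object $X$ and then add up the results.

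First I would fix a simple object $X$ and verify that $D:=\End(X)$, regarded via the left embedding $\lambda_X$ as an algebra object in the tensor subcategory $\Vec_{\mathbb E}=\langle\1\rangle\subseteq\s C$, satisfies the hypotheses of Proposition \ref{FPdimsOfRelativeTensorProds}. Indecomposability is immediate: a division algebra has no nontrivial idempotents. Separability follows because $\s C$ is separable over $\mathbb K$, which makes $\End(X)$ a separable $\mathbb K$-algebra, hence a separable $\mathbb E$-algebra; since $\otimes$ restricted to $\langle\1\rangle\simeq\Vec_{\mathbb E}$ agrees with $\otimes_{\mathbb E}$, the usual separability section of the multiplication of $\End(X)$ is a section in $\s C$, so $\End(X)$ is a separable algebra object of $\s C$. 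Taking $M=X^*$ as a right $D$-module and $N=X$ as a left $D$-module, Proposition \ref{FPdimsOfRelativeTensorProds} gives
\[\FPdim\big(X^*\otimes_{\End(X)}X\big)\;=\;\frac{\FPdim(X^*)\,\FPdim(X)}{\FPdim\big(\End(X)\big)}\,.\]

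Two remaining identifications finish the computation. First, $\FPdim(X^*)=\FPdim(X)$ by Proposition \ref{Prop:FPdimOfXDual}. Second, as an object of $\s C$ the algebra $\End(X)$ lies in $\Vec_{\mathbb E}$, where it is a direct sum of $\dim_{\mathbb E}\End(X)$ copies of the unit object $\mathbb E=\1$ — the $\mathbb E$-vector-space structure being the one induced by $\lambda_X$, matching the convention in the statement — so that $\FPdim\big(\End(X)\big)=\dim_{\mathbb E}\End(X)\cdot\FPdim(\1)=\dim_{\mathbb E}\End(X)$ because $\FPdim$ is a character with $\FPdim(\1)=1$. Substituting and summing over $X$,
\[\FPdim(R)\;=\;\sum_{X\in\s O(\s C)}\frac{\FPdim(X)^2}{\dim_{\mathbb E}\End(X)}\;=\;\FPdim(\s C)\,,\]
the last equality being the definition of $\FPdim(\s C)$. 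Finally, $R$ is an honest object of the fusion category $\s C$, and the $\FPdim$ of any such object is a finite sum of $\FPdim$s of its simple constituents, each of which is an algebraic integer by Proposition \ref{Prop:FPdimGeq1} part 1; hence $\FPdim(\s C)=\FPdim(R)$ is an algebraic integer.

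The step I expect to be the main obstacle is the bookkeeping around $D=\End(X)$ as an algebra object of $\s C$: unwinding the definition of separability of $\s C$ to get separability of $\End(X)$ as an $\mathbb E$-algebra, confirming that $\otimes$ on $\langle\1\rangle$ is genuinely $\otimes_{\mathbb E}$ so that Proposition \ref{FPdimsOfRelativeTensorProds} applies to $D$, and checking that the left and right $\End(X)$-module structures on $X$ and $X^*$ used in the coend are exactly the ones for which that proposition computes $\FPdim(M\otimes_D N)$. One should also double-check that the number of copies of $\1$ inside $\End(X)$ is counted with respect to $\lambda_X$ rather than $\rho_X$, so the formula matches the definition of $\FPdim(\s C)$ on the nose. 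Once these points are pinned down, the rest is a direct substitution.
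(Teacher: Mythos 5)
Your proposal is correct and follows essentially the same route as the paper: decompose $R$ as $\bigoplus_X X^*\otimes_{\End(X)}X$, check that $\End(X)$ is indecomposable (division algebra) and separable so that Proposition \ref{FPdimsOfRelativeTensorProds} applies, then use $\FPdim(X^*)=\FPdim(X)$ and $\FPdim(\End(X))=\dim_{\mathbb E}\End(X)$ to recover the defining sum for $\FPdim(\s C)$. The extra bookkeeping you flag about $\langle\1\rangle\simeq\Vec_{\mathbb E}$ and the $\lambda_X$ convention is exactly the content the paper disposes of in its preamble to the theorem, so nothing is missing.
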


\begin{proof}
    Simplicity of $X$ implies that $\End(X)$ is a division algebra, hence indecomposable.  Separability of $\s C$ implies that $\End(X)$ is separable.  Thus Proposition \ref{FPdimsOfRelativeTensorProds} applies, and we find that
    \begin{align*}
        \FPdim(R)&=\FPdim\left(\bigoplus_{X\in\s O(\s C)}X^*\otimes_{\End(X)}X\right)\\
        &=\sum_{X\in\s O(\s C)}\FPdim\left(X^*\otimes_{\End(X)}X\right)\\
        &=\sum_{X\in\s O(\s C)}\frac{\FPdim(X^*)\cdot\FPdim(X)}{\FPdim\big(\End(X)\big)}\\
        &=\sum_{X\in\s O(\s C)}\frac{\FPdim(X)^2}{\dim_{\mathbb E}\big(\End(X)\big)}\;=\;\FPdim(\s C)\qedhere
    \end{align*}
\end{proof}

\section{Consequences for the Drinfeld Center}\label{DrinfeldCenterSection}

The existence of Galois nontrivial objects, i.e. those $X$ where $\lambda_X\neq\rho_X$, in the nAC setting causes the Drinfeld center to behave very differently from the AC case.  By simply observing the way that elements of the endomorphism field $\mathbb E:=\End(\1)$ interact with the naturality of a half-braiding, we find that Galois nontrivial objects are `unbraidable', and that they restrict the endomorphism field of the center.  From these two primary observations a collection of interesting new phenomena arise.

Throughout this section, we assume that $\s C$ is a fusion category over $\mathbb K$.  Unfortunately, $\mathcal C$ being fusion does not imply that $\mathcal Z(\mathcal C)$ is fusion, as the following example shows.

\begin{example}
    Let $\mathcal C=\Vec_{\mathbb K}(G)$, and let $(V,\nu)$ be an object in $\Rep_{\mathbb K}(G)$ with $\dim_{\mathbb K}(V)=n$.  We can think of $V$ as the object $\1^{\oplus n}$ in $\mathcal C$, and on this object, we can define a half-braiding by the formula
    \[v\otimes t\longmapsto t\otimes \nu(g^{-1})v\,,\]
    whenever $v\in V$ and $t$ is homogeneous of degree $\deg(t)=g\in G$.  In this way, $\Rep_{\mathbb K}(G)$ can be seen to be a full subcategory of $\mathcal Z(\mathcal C)$.

    Regardless of the characteristic of the underlying field $\mathbb K$, $\mathcal C$ is separable.  However, $\Rep_{\mathbb K}(G)$ is separable if and only if $\char(\mathbb K)\nmid|G|$.  It follows that when $\char(\mathbb K)\mid|G|$, $\mathcal C$ is fusion, and yet $\mathcal Z(\mathcal C)$ is nonsemisimple.
\end{example}

The hypothesis that $\mathcal Z(\mathcal C)$ be fusion is necessary whenever computing Frobenius-Perron dimensions, and so we state this explicitly in Theorem \ref{Thm:FPdimOfZ(C)}.  We expect that a nonsemisimple generalization of nAC $\FPdim$s would make this assumption unnecessary, but this theory is beyond the scope of the present article.

\begin{proposition}[Unbraidability]\label{Unbraidable}
    Suppose $(Z,\gamma)$ is an object in $\s Z(\s C)$, and $X$ is a Galois nontrivial object in $\s C$.  It follows that $Z$ cannot have $X$ as a subobject, i.e. there can be no monomorphism $X\hookrightarrow Z$.
\end{proposition}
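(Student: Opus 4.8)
The plan is to extract a contradiction from the naturality square of the half-braiding $\gamma$ applied to pseudo-scalars. Recall that if $X \hookrightarrow Z$ is a monomorphism, then by semisimplicity $X$ is a direct summand of $Z$, and the half-braiding $\gamma$ on $Z$ restricts (via the projection and inclusion) to give compatible data on $X$. More precisely, for each object $W$ of $\s C$ the half-braiding supplies a natural isomorphism $\gamma_W : Z \otimes W \xrightarrow{\sim} W \otimes Z$, and since $X$ is a summand of $Z$ we obtain an induced isomorphism $\sigma_W : X \otimes W \xrightarrow{\sim} W \otimes X$ that is natural in $W$. The key point is to exploit naturality of $\sigma$ with respect to a morphism $W \to W$.

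First I would specialize to $W = \1$. Naturality of $\sigma$ in the $W$-variable, applied to the morphism $e : \1 \to \1$ for an arbitrary pseudo-scalar $e \in \mathbb E(\s C)$, gives a commuting square relating $\sigma_{\1}$ before and after post-composition/pre-composition with $\id_X \otimes e$ and $e \otimes \id_X$. Using the unitors $\ell_X$ and $r_X$ to identify both $X \otimes \1$ and $\1 \otimes X$ with $X$, and unwinding the definitions in Definition \ref{Def:LREmbeddings}, this square becomes precisely the assertion that (the unitor-transported version of) $\sigma_{\1}$ intertwines $\lambda_X(e)$ with $\rho_X(e)$: that is, $\bar\sigma \circ \lambda_X(e) = \rho_X(e) \circ \bar\sigma$ in $\End(X)$, where $\bar\sigma$ is an invertible element of $\End(X)$. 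Since $\End(X)$ is a division algebra and $\bar\sigma$ is a unit, this says $\lambda_X(e)$ and $\rho_X(e)$ are conjugate in $\End(X)$ for every $e$; in particular $\lambda_X(e) = \rho_X(e)$ whenever $e$ is central in $\End(X)$, and more to the point $\im(\lambda_X) = \bar\sigma\,\im(\rho_X)\,\bar\sigma^{-1}$. To actually force $\lambda_X = \rho_X$ one must do slightly better: apply naturality of $\sigma$ not just at $W = \1$ but observe that $\bar\sigma$ must itself commute with the subfield $\lambda_X(\mathbb E) \subseteq \End(X)$, because the half-braiding is a morphism of $\1$-bimodule structure (equivalently, use that $\gamma$ is compatible with the unit constraints, so $\sigma_{\1}$ is $\mathbb E$-linear for the left action). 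Then conjugation by $\bar\sigma$ fixes $\lambda_X(\mathbb E)$ pointwise, and the intertwining relation collapses to $\lambda_X(e) = \rho_X(e)$ for all $e$, contradicting Galois nontriviality of $X$.

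The main obstacle is making the bookkeeping in the previous paragraph airtight: one has to check carefully that the restriction of the half-braiding to the summand $X$ really does yield a \emph{natural} family $\sigma_W$ (this uses that the inclusion and projection $X \rightleftarrows Z$ are split, so the hexagon/naturality axioms for $\gamma$ descend), and one has to verify that $\bar\sigma \in \End(X)$ commutes with $\lambda_X(\mathbb E)$ rather than merely conjugating $\im(\rho_X)$ onto $\im(\lambda_X)$. The cleanest way to secure the commutation is to note that the object $\1$ is the monoidal unit, so $\gamma_{\1}$ is determined by the unitors up to the ambient structure, and in a fusion category the half-braiding at the unit is forced to be (the identity, up to unitors) — hence $\sigma_{\1}$ composed with the unitors is literally an endomorphism of $X$ that is simultaneously a left-$\mathbb E$-module map, pinning down that it lies in the centralizer of $\lambda_X(\mathbb E)$. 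Once that is in place the contradiction is immediate from the definition of Galois nontriviality. A secondary, purely expository obstacle is that the statement as phrased is about a monomorphism $X \hookrightarrow Z$; I would simply invoke semisimplicity at the outset to replace "subobject" by "direct summand," which is where the splitting needed above comes from.
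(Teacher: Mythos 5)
Your proposal is correct and follows essentially the same route as the paper: naturality of the half-braiding applied to a pseudo-scalar $e:\1\to\1$, together with the unit constraint forcing $\gamma_{\1}$ to be trivial up to unitors, yields $\lambda_Z(e)=\rho_Z(e)$, and compressing to the summand $X$ contradicts Galois nontriviality. One caution: the compressed maps $\sigma_W$ are natural in $W$ but need not be isomorphisms for general $W$ (if they were, $X$ would itself admit a half-braiding, which is exactly what the proposition rules out); you only ever use $W=\1$, where the unit axiom gives $\bar\sigma=\id_X$ and makes your detour through conjugacy in the division algebra $\End(X)$ unnecessary.
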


\begin{proof}
    Suppose for the sake of contradiction that $X$ was a subobject of $Z$.  Since $\s C$ is fusion, it follows that $Z\cong X\oplus Y$ for some other object $Y$.  Let $\id_{Z}=\iota_X\pi_X+\iota_Y\pi_Y$ be the corresponding decomposition of the identity map on $Z$.  By the naturality of the unit transformation $\ell$, it follows that for any $e\in\mathbb E$,
    \[\rho_Z(e)\;=\;\iota_X\rho_X(e)\pi_X+\iota_Y\rho_Y(e)\pi_Y\,.\]
    By the naturality of $\gamma$, we find that
    \[\lambda_Z(e)=\rho_Z(e)\,.\]
    Now by Galois nontriviality of $X$, there must exist some $t\in\mathbb E$ such that $\lambda_X(t)\neq\rho_X(t)$.  Thus, we find that
    \begin{align*}
        0&=\lambda_Z(t)-\rho_Z(t)\\
        &=\iota_X\lambda_X(t)\pi_X+\iota_Y\lambda_Y(t)\pi_Y-\iota_X\rho_X(t)\pi_X-\iota_Y\rho_Y(t)\pi_Y\,.
    \end{align*}
    By composing on the right with $\iota_X$, this becomes
    \begin{align*}
        0&=\iota_X\lambda_X(t)-\iota_X\rho_X(t)\\
        &=\iota_X\big(\lambda_X(t)-\rho_X(t)\big)\,.
    \end{align*}
    Since $\iota_X$ is monic, this forces a contradiction.
\end{proof}

\begin{corollary}\label{ForgFunctNotDominant}
    The forgetful functor $F:\s Z(\s C)\to \s C$ is not dominant whenever $\s C$ contains Galois nontrivial objects.
\end{corollary}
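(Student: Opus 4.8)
The plan is to reduce the statement to a single application of Proposition~\ref{Unbraidable}. First I would invoke the remark following Definition~\ref{DefDominant}: a functor $F\colon\mathcal Z(\mathcal C)\to\mathcal C$ is dominant precisely when every simple object $Y$ of $\mathcal C$ occurs as a subobject of $F(Z,\gamma)$ for some object $(Z,\gamma)$ of $\mathcal Z(\mathcal C)$. Thus to prove non-dominance it suffices to exhibit one simple object of $\mathcal C$ that is a subobject of no $F(Z,\gamma)=Z$.

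Next I would check that the hypothesis ``$\mathcal C$ contains a Galois nontrivial object'' upgrades to ``$\mathcal C$ contains a Galois nontrivial \emph{simple} object.'' This is the only point requiring a (very short) argument: by semisimplicity write the given Galois nontrivial object as a finite direct sum of simples, and use the block decomposition of $\lambda$ and $\rho$ along $\mathrm{id}=\sum\iota_i\pi_i$ exactly as in the proof of Proposition~\ref{Unbraidable}. If every simple summand were Galois trivial, then $\lambda$ and $\rho$ would agree on the whole sum, contradicting Galois nontriviality of the object. So fix a Galois nontrivial simple object $X$ in $\mathcal C$.

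Finally, apply Proposition~\ref{Unbraidable}: for every object $(Z,\gamma)$ of $\mathcal Z(\mathcal C)$ there is no monomorphism $X\hookrightarrow Z=F(Z,\gamma)$. Hence $X$ is a simple object of $\mathcal C$ that is not a subobject of anything in the image of $F$, and by the characterization above $F$ is not dominant. I do not anticipate a genuine obstacle here; the ``hard part'' is merely the semisimplicity bookkeeping in the second paragraph, and even that is immediate once one reuses the block-diagonal identity already established in the proof of Proposition~\ref{Unbraidable}.
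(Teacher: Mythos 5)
Your argument is correct and is essentially the paper's: the corollary is stated as an immediate consequence of Proposition~\ref{Unbraidable}, with no further proof given. Your reduction to a Galois nontrivial \emph{simple} summand is valid but not strictly necessary, since Definition~\ref{DefDominant} quantifies over all elements $b\in B$ (not only simple ones), so Proposition~\ref{Unbraidable} applied to the given Galois nontrivial object already contradicts dominance directly.
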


Thus Galois nontrivial objects are not in the image of the forgetful functor.  It turns out that the converse holds as well.  To see this, we will need to know a formula for the left adjoint to the forgetful functor.

\begin{theorem}\cite[cf. Thm 4.3]{dayCentres}\label{LeftAdjointFormula}
    At the level of objects, the left adjoint $I:\s C\to\s Z(\s C)$ to the forgetful functor $F:\s Z(\s C)\to\s C$ is given by the formula
    \[F\big(I(V)\big)\;=\;\int^{X\in\s C}X^*\otimes V\otimes X\,.\]
\end{theorem}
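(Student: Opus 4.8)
The plan is to build the object $R(V):=\int^{X\in\s C}X^*\otimes V\otimes X$ by hand, equip it with a half-braiding $\gamma$, and then check directly that $\big(R(V),\gamma\big)$ corepresents the functor $\s Z(\s C)\to\Set$, $(Z,\delta)\mapsto\Hom_{\s C}(V,Z)$. By the Yoneda lemma this simultaneously shows that the left adjoint $I$ exists and identifies $I(V)\cong\big(R(V),\gamma\big)$, whence $F\big(I(V)\big)\cong R(V)$, which is exactly the claimed formula; since the statement is only "at the level of objects", the monoidal/braided structure of $I$ plays no role. The coend $R(V)$ exists because $\s C$ is finite semisimple, so it is a finite colimit (explicitly a finite direct sum of terms built from $X^*\otimes V\otimes X$, $X\in\s O(\s C)$, just as for the canonical coend above), and it is functorial in $V$ through the universal dinatural maps $\iota_X:X^*\otimes V\otimes X\to R(V)$.

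The first substantial step is to construct the half-braiding $\gamma_Y:R(V)\otimes Y\to Y\otimes R(V)$. Because $\s C$ is rigid, the functors $-\otimes Y$ and $Y\otimes-$ are left adjoints and hence commute with coends, so $R(V)\otimes Y\cong\int^X X^*\otimes V\otimes X\otimes Y$ and $Y\otimes R(V)\cong\int^X Y\otimes X^*\otimes V\otimes X$. I would define $\gamma_Y$ out of the first coend via the dinatural family
\[X^*\otimes V\otimes X\otimes Y\xrightarrow{\coev_Y\otimes\,\id}Y\otimes(X\otimes Y)^*\otimes V\otimes(X\otimes Y)\xrightarrow{\id_Y\otimes\,\iota_{X\otimes Y}}Y\otimes R(V)\,,\]
where the first arrow uses the regrouping $Y\otimes Y^*\otimes X^*\cong Y\otimes(X\otimes Y)^*$ coming from the duality convention $(X\otimes Y)^*\cong Y^*\otimes X^*$. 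Dinaturality of this family in $X$ follows from dinaturality of $\iota$ together with the rigidity zig-zag identities, so it induces a unique $\gamma_Y$; one then checks that $\gamma_Y$ is natural in $Y$, is invertible (build the inverse the same way using $\ev_Y$, or by a symmetric construction), and satisfies the hexagon axiom. This last bundle of verifications is essentially a bookkeeping exercise tracking the interaction of coends with the rigid structure, and it is the step I expect to be the main obstacle; alternatively it can simply be imported from \cite{dayCentres}.

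With $\big(R(V),\gamma\big)$ in hand, the final step is the adjunction bijection. For $(Z,\delta)\in\s Z(\s C)$, the universal property of the coend identifies $\Hom_{\s C}\big(R(V),Z\big)$ with the set of dinatural families $(g_X:X^*\otimes V\otimes X\to Z)_X$, and a morphism $\big(R(V),\gamma\big)\to(Z,\delta)$ in $\s Z(\s C)$ is such a family whose induced map intertwines $\gamma$ with $\delta$. The candidate inverse bijection sends $h:V\to Z$ to the family
\[g_X:\;X^*\otimes V\otimes X\xrightarrow{\id\otimes h\otimes\id}X^*\otimes Z\otimes X\xrightarrow{\id\otimes\delta_X}X^*\otimes X\otimes Z\xrightarrow{\ev_X\otimes\id}Z\,,\]
which is dinatural in $X$ by naturality of $\delta$ and the zig-zag identities, and whose induced $f:R(V)\to Z$ respects half-braidings by the hexagon for $\delta$. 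Conversely, a half-braided morphism determines a dinatural family whose value at $X=\1$ (where $\delta_{\1}=\id$ and $\ev_{\1}$ is the unit isomorphism) is a map $V\to Z$; once $\gamma$ is spelled out, the intertwining condition forces the whole family to agree with the displayed formula for every $X$, so the two assignments are mutually inverse, and naturality in $(Z,\delta)$ is immediate. Hence $\big(R(V),\gamma\big)$ is the value of the left adjoint at $V$, and applying $F$ yields $F\big(I(V)\big)\cong R(V)=\int^{X\in\s C}X^*\otimes V\otimes X$.
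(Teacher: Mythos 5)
Your proposal is correct in outline, but it is organized around a genuinely different mechanism than the paper's proof. The paper never verifies the adjunction directly: it equips $T(V)=\int^{X}X^*\otimes V\otimes X$ with a monad structure $(\mu,\eta)$, identifies $T$-algebra structures $\gamma:T(Z)\to Z$ with half-braidings via the end computation $\Hom(T(Z),Z)\cong\mathrm{Nat}(Z\otimes-,-\otimes Z)$, shows $\s Z(\s C)\simeq\s C^{T}$ as the Eilenberg--Moore category, and then invokes the general fact that the free-algebra functor $V\mapsto(T(V),\mu_V)$ is left adjoint to the forgetful functor. You instead construct the half-braiding $\gamma$ on $R(V)$ explicitly via the dinatural family built from $\coev_Y$ and $\iota_{X\otimes Y}$, and then prove corepresentability of $(Z,\delta)\mapsto\Hom_{\s C}(V,Z)$ by exhibiting the bijection with dinatural families $g_X=(\ev_X\otimes\id)(\id\otimes\delta_X)(\id\otimes h\otimes\id)$ and checking the two assignments are mutually inverse (your recovery of $g_X$ from $g_{\1}$ via the intertwining condition at $Y=X$ is the key point, and it does go through using the zig-zag identities). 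The two constructions produce the same object with the same half-braiding --- your $\gamma$ is exactly what the monad multiplication encodes --- but the monadic route packages the hexagon, naturality, and invertibility checks into associativity and unitality of $\mu$ and buys the additional structural fact that $F$ is monadic, whereas your route is more elementary and makes the unit of the adjunction ($\eta_V=\iota_{\1}$) and the universal property completely explicit at the cost of doing the dinaturality and hexagon bookkeeping by hand. Both arguments defer comparable amounts of verification (yours to ``bookkeeping'' and the cited reference, the paper's to ``a few more details''), so I regard your proposal as a valid alternative proof sketch at the same level of rigor as the one in the text.
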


\begin{proof}
    The proof of this result is formal, elegant and involved.  Here we present a sketch of the argument, and the reader is encouraged to look at \cite[§3-§4]{bruguieresCategoricalCentersReshetikhinTuraev2008} and \cite[§6]{shimizuTheMonoidalCenter} for complete expositions, and also at \cite{loregian_2021} for basic coend manipulation techniques.
    
    Let us fix the notation
    \[T(V):=\int^{X\in \s C}X^*\otimes V\otimes X\,.\]
    This construction $T$ is functorial and comes equipped with canonical maps
    \[i(X,V):X^*\otimes V\otimes X\to T(V)\]
    that are natural in $V$ and dinatural in $X$.  The coend admits a monad structure $\mu:T^2\to T$ and $\eta:\id_{\s C}\to T$ which are uniquely determined on components by the formulae
    \begin{gather*}
        \mu_V\circ i(Y,TV)\circ\big(\id_{X^*}\otimes i(X,V)\otimes \id_X\big)\;=\;i(X\otimes Y,V)\,,\\
        \eta_V\;=\;i(\1,V)\,.
    \end{gather*}
    Observe that the data of an algebra $\gamma:T(Z)\to Z$ for this monad can be translated as follows
    \begin{align*}
        \Hom\big(T(Z),Z\big)&=\Hom\left(\int^{X}X^*\otimes Z\otimes X\,,\,Z\right)\\
        &=\int_{X}\Hom\left(X^*\otimes Z\otimes X\,,\,Z\right)\\
        &=\int_{X}\Hom\left(Z\otimes X\,,\,X\otimes Z\right)\\
        &=\text{Nat}\big(Z\otimes -\,,\,-\otimes Z\big)\,.
    \end{align*}
    This shows that an algebra map $\gamma:T(Z)\to Z$ can be turned into a candidate for a half-braiding on $Z$.  The hexagon axiom and invertibility of this half-braiding follow from the associativity and unitality of the algebra structure $\gamma$.  After verifying a few more details, it can be shown that the Eilenberg-Moore category $\s C^T$ of algebras for this monad $T:\s C\to\s C$ is equivalent to $\s Z(\s C)$ as a monoidal category.  Under this equivalence, the forgetful functor from $\s Z(\s C)\to \s C$ coincides with the forgetful functor from $\s C^T\to \s C$ which forgets the structure map.  The free $T$-algebra on an object $V$ in $\s C$ is precisely the algebra $\big(T(V),\mu_V\big)$, and this construction forms the left adjoint to the forgetful functor.  Thus after forgetting the structure map, we are left with the desired object.
\end{proof}

\begin{theorem}\label{Thm:ImageOfForgetful=GTs}
    The dominant image $\im(F)$ of the forgetful functor $F:\s Z(\s C)\to\s C$ consists of precisely the Galois trivial objects.
\end{theorem}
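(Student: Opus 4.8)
The plan is to prove the two inclusions $\im(F)\subseteq\{\text{Galois trivial objects}\}$ and $\{\text{Galois trivial objects}\}\subseteq\im(F)$ separately. The first is essentially free: by definition of the dominant image and semisimplicity, every object of $\im(F)$ is a direct sum of simple objects each of which embeds into $F(Z)$ for some $(Z,\gamma)\in\s Z(\s C)$; Proposition \ref{Unbraidable} forbids a Galois nontrivial simple from being such a subobject, and an object is Galois trivial exactly when each of its simple summands is (the embeddings $\lambda$ and $\rho$ are block diagonal along any direct sum decomposition), so this inclusion is just a repackaging of Corollary \ref{ForgFunctNotDominant}.

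For the reverse inclusion, let $X$ be a Galois trivial simple object; I will show $X$ is a direct summand of $F\big(I(X)\big)$, where $I\colon\s C\to\s Z(\s C)$ is the left adjoint of Theorem \ref{LeftAdjointFormula}, so that $F\big(I(X)\big)=\int^{Y\in\s C}Y^*\otimes X\otimes Y$. By the same reduction to simple objects that produces the decomposition $R\cong\bigoplus_X X^*\otimes_{\End(X)}X$ recalled before Theorem \ref{Thm:FPdimRIsFPdimC} — the vanishing of $\Hom(S,T)$ for nonisomorphic simples kills all cross-isotypic dinaturality relations — one gets
\[
F\big(I(X)\big)\;=\;\int^{Y\in\s C}Y^*\otimes X\otimes Y\;\cong\;\bigoplus_{Y\in\s O(\s C)}\frac{Y^*\otimes X\otimes Y}{\big\langle\,\{\,f^*\otimes\id_X\otimes\id_Y-\id_{Y^*}\otimes\id_X\otimes f\;:\;f\in\End(Y)\,\}\,\big\rangle}
\]
as a genuine direct sum, with the coend structure map $i(\1,X)$ identified with the inclusion of the $Y=\1$ summand. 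In that summand $\End(\1)=\mathbb E$, and the canonical identification $\1^*\otimes X\otimes\1\cong X$ sends the relator for $f=e\in\mathbb E$ to $\lambda_X(e)-\rho_X(e)$; since $X$ is Galois trivial these all vanish, so the $Y=\1$ summand is $X$ itself, giving $X\in\im(F)$. Combining the two inclusions completes the proof.

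The main obstacle is this second direction, and within it two bookkeeping-heavy points: setting up the direct-sum decomposition of $\int^Y Y^*\otimes X\otimes Y$ over simple objects (mirroring the treatment of $R$), and correctly identifying its $Y=\1$ summand. For the latter one must pin down that the anti-automorphism of $\mathbb E$ induced by the canonical isomorphism $\1\cong\1^*$ is the identity — a routine coherence check, alternatively forced by the already-known fact that the $\1$-summand of $R$ is $\1$ — since a Galois-twisted relator would instead collapse the summand and wreck the argument. Everything else is immediate once that summand is identified. (Sanity check: for a Galois nontrivial simple $X$ the relator $\lambda_X(e_0)-\rho_X(e_0)$ is a nonzero endomorphism of the simple object $X$, hence invertible in the division algebra $\End(X)$, so the $Y=\1$ summand collapses — consistent with $I(X)=0$, which also follows directly from Proposition \ref{Unbraidable} via the adjunction.)
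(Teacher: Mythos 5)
Your proof is correct and follows essentially the same route as the paper: one direction is the repackaging of Proposition \ref{Unbraidable}, and the other identifies a Galois trivial object with the $Y=\1$ summand of the coend $F\big(I(V)\big)=\int^{Y}Y^*\otimes V\otimes Y$, exactly as in the paper's argument. The extra care you take over the decomposition of the coend into isotypic summands and the identification $\1\cong\1^*$ is sound but not a departure from the paper's method.
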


\begin{proof}
        Suppose $V$ is Galois trivial.  This means that for every $e:\1\to\1$, $\lambda_V(e)=\rho_V(e)$.  Since $e^*=e$, we can recast this Galois triviality statement as
        \[e^*\otimes \id_V\otimes \id_\1-\id_{\1^*}\otimes\id_V\otimes e\;=\;0\,.\]
        In other words the quotient map
        \[V\cong\1^*\otimes V\otimes\1\twoheadrightarrow\frac{\1^*\otimes V\otimes\1}{\left\langle\big\{ e^*\otimes \id_V\otimes \id_\1-\id_{\1^*}\otimes\id_V\otimes e\;\;|\;\;e:\1\to\1\,\big\}\;\right\rangle}\]
        is an isomorphism, since the set of relations is $\{0\}$.  This shows that
        \[V\hookrightarrow\bigoplus_{X\in\s O(\s C)}\frac{X^*\otimes V\otimes X}{\langle \textit{relations}\rangle}\;\cong\;T(V)\;=\;F\big(I(V)\big)\,,\]
        and so $V$ is in $\im(F)$.
\end{proof}

\begin{remark}
    The dominant image of a tensor functor between fusion categories is always a (full) fusion subcategory.  Thus, Theorem \ref{Thm:ImageOfForgetful=GTs} implies that the full subcategory spanned by the Galois trivial objects forms a fusion subcategory of $\mathcal C$.  Alternatively, this fact could be deduced from Definition \ref{Def:GaloisNontrivial} directly, but the proof would be longer.
\end{remark}

\begin{proposition}\cite[cf. proof of Lemma 5.6]{kong2019semisimple}\label{Prop:EndOf1InZ(C)}
    The endomorphism field of $\s Z(\s C)$ is
    \[\mathbb E\big(\s Z(\s C)\big)\;=\;\{e\in\mathbb E(\s C)\;|\;\forall X\in\s O(\s C),\,\lambda_X(e)=\rho_X(e)\}\]
\end{proposition}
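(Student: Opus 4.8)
The plan is to compute $\End_{\s Z(\s C)}\big(\1_{\s Z(\s C)}\big)$ explicitly and match it with the right-hand side. Since the forgetful functor $F:\s Z(\s C)\to\s C$ is faithful and monoidal with $F\big(\1_{\s Z(\s C)}\big)\cong\1_{\s C}$, it induces an injective $\mathbb K$-algebra homomorphism $\End_{\s Z(\s C)}\big(\1_{\s Z(\s C)}\big)\hookrightarrow\End_{\s C}\big(\1_{\s C}\big)=\mathbb E(\s C)$. Hence the whole problem reduces to identifying which pseudo-scalars $e\in\mathbb E(\s C)$ lift, i.e. which morphisms $e:\1\to\1$ in $\s C$ are compatible with the canonical half-braiding on $\1_{\s Z(\s C)}$.

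Next I would recall that $\1_{\s Z(\s C)}=(\1_{\s C},\gamma)$, where the half-braiding is $\gamma_X=r_X^{-1}\circ\ell_X:\1\otimes X\to X\otimes\1$ (the unique half-braiding on the unit, which satisfies the hexagon identities by the triangle axiom). By the definition of morphisms in the center, $e$ underlies an endomorphism of $(\1,\gamma)$ exactly when $\gamma_X\circ(e\otimes\id_X)=(\id_X\otimes e)\circ\gamma_X$ for every object $X$. The core computation is to rewrite both sides using Definition \ref{Def:LREmbeddings}: the defining diagrams there give $\ell_X\circ(e\otimes\id_X)=\lambda_X(e)\circ\ell_X$ and $(\id_X\otimes e)\circ r_X^{-1}=r_X^{-1}\circ\rho_X(e)$, so substituting $\gamma_X=r_X^{-1}\ell_X$ turns the compatibility equation into $r_X^{-1}\lambda_X(e)\ell_X=r_X^{-1}\rho_X(e)\ell_X$, and cancelling the isomorphisms $r_X^{-1}$ and $\ell_X$ yields $\lambda_X(e)=\rho_X(e)$. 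I expect this diagram chase — keeping the direction of the unitor naturalities straight and correctly pinning down the half-braiding on the unit — to be the only delicate point, and it is brief.

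Finally I would pass from the condition over all objects $X$ to the condition over simple objects: exactly as in the proof of Proposition \ref{Unbraidable}, a decomposition $X\cong\bigoplus_i X_i$ gives $\lambda_X(e)=\sum_i\iota_{X_i}\lambda_{X_i}(e)\pi_{X_i}$, and likewise for $\rho_X$, so $\lambda_X(e)=\rho_X(e)$ holds for all objects $X$ if and only if it holds for all $X\in\s O(\s C)$. Combining the three steps identifies the image of $\End_{\s Z(\s C)}\big(\1_{\s Z(\s C)}\big)$ inside $\mathbb E(\s C)$ with $\{\,e\in\mathbb E(\s C)\mid\lambda_X(e)=\rho_X(e)\text{ for all }X\in\s O(\s C)\,\}$, which is the assertion. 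As a byproduct this set is visibly a $\mathbb K$-subalgebra of the field $\mathbb E(\s C)$, finite over $\mathbb K$ and an integral domain, hence itself a field, which justifies the notation $\mathbb E\big(\s Z(\s C)\big)$.
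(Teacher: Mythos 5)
Your argument is correct and takes essentially the same route as the paper, whose proof is a two-line version of exactly this: the monoidal unit of the center is $(\1,r^{-1}\circ\ell)$, and compatibility of $e:\1\to\1$ with this half-braiding unwinds, via Definition \ref{Def:LREmbeddings}, to $\lambda_X(e)=\rho_X(e)$ for all $X$ (with the reduction to simple $X$ by semisimplicity). One small caveat: your parenthetical claim that $r^{-1}\circ\ell$ is the \emph{unique} half-braiding on $\1$ is false in general (monoidal automorphisms of the identity functor produce others, e.g.\ characters of $G$ for $\Vec_{\mathbb K}(G)$), but this is harmless since the monoidal unit of $\s Z(\s C)$ is by definition the specific object $(\1,r^{-1}\circ\ell)$.
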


\begin{proof}
    The monoidal unit in the center is $(\1,r^{-1}\circ\ell)$.  The condition that $e:\1\to\1$ respects this trivial braiding is equivalent to the statement that $\lambda_X(e)=\rho_X(e)$ for every $X$.
\end{proof}

\begin{example}
    The category $(\mathbb C,\mathbb C)\Bim$ has Drinfeld center equivalent to $\Vec_{\mathbb R}$.  Here Proposition \ref{Prop:EndOf1InZ(C)} singles out those complex scalars that are fixed by complex conjugation, hence the unit in the center has $\End(\1)\cong\mathbb R$.  In this example, the result of Corollary \ref{ForgFunctNotDominant} is evident from the fact that the forgetful functor is monoidal, so it must send the unit to the unit, which leaves nothing to dominate the conjugating bimodule.
\end{example}

\begin{theorem}\label{Thm:FPdimOfZ(C)}
    Suppose both $\mathcal C$ and $\mathcal Z(C)$ are fusion over $\mathbb K$, and let $F:\mathcal Z(\mathcal C)\to\mathcal C$ be the forget the forgetful functor. The Frobenius-Perron dimension of $\s Z(\s C)$ satisfies
    \[\FPdim\big(\s Z(\s C)\big)\;=\;\left(\frac{d_{\s Z(\s C)}}{d_{\s C}}\right)\FPdim\Big(\im(F)\Big)\FPdim(\s C)\leq\FPdim(\mathcal C)^2\,.\]
    Moreover, equality holds if and only if all objects of $\mathcal C$ are Galois trivial.
\end{theorem}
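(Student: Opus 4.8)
The plan is to leverage the adjoint-dimension formula of Proposition~\ref{Prop:FPdimsOfAdjoints} together with the identification of $\im(F)$ from Theorem~\ref{Thm:ImageOfForgetful=GTs}. The forgetful functor $F:\s Z(\s C)\to\s C$ factors through its dominant image $\im(F)$, which by Theorem~\ref{Thm:ImageOfForgetful=GTs} is the fusion subcategory of Galois trivial objects; call it $\s C^{GT}$. First I would note that $F$, viewed as a functor $\s Z(\s C)\to\s C^{GT}$, is dominant, and that its left adjoint is (a corestriction of) the functor $I$ of Theorem~\ref{LeftAdjointFormula}. The functor $F$ is a genuine tensor functor that is $\mathbb K$-linear in the strong sense (no twisting $D$), so it satisfies the hypotheses of Proposition~\ref{fPlaysNiceButNowWN} with $D=\1$, hence those of Proposition~\ref{Prop:FPdimsOfAdjoints} with $\FPdim(D)=1$. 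Applying Proposition~\ref{Prop:FPdimsOfAdjoints} to the adjoint pair $I\dashv F$ between $\s C^{GT}$ and $\s Z(\s C)$, and evaluating at $X=\1$ (so $\FPdim(X)=1$ and $I(\1)$ is the regular-type object whose $\FPdim$ controls things), should yield
\[\FPdim\big(I(\1)\big)\;=\;\frac{d_{\s Z(\s C)}}{d_{\s C^{GT}}}\cdot\frac{\FPdim(\s C^{GT})}{\FPdim(\s Z(\s C))}\,.\]

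Next I would compute $\FPdim\big(I(\1)\big)$ independently. By the monad description in the proof of Theorem~\ref{LeftAdjointFormula}, $F(I(\1))=T(\1)=\int^{X}X^*\otimes X=R$, the canonical coend of $\s C$. By Theorem~\ref{Thm:FPdimRIsFPdimC}, $\FPdim(R)=\FPdim(\s C)$, and since $F$ preserves $\FPdim$s of objects (Proposition~\ref{fPlaysNiceWFPdims}(1), or Proposition~\ref{Prop:FPdimsOfAdjoints}'s underlying mechanism), $\FPdim\big(I(\1)\big)=\FPdim(\s C)$. Combining the two expressions for $\FPdim(I(\1))$ and rearranging gives
\[\FPdim\big(\s Z(\s C)\big)\;=\;\frac{d_{\s Z(\s C)}}{d_{\s C^{GT}}}\cdot\frac{\FPdim(\s C^{GT})}{\FPdim(\s C)}\,.\]
The one remaining bookkeeping point is to reconcile $d_{\s C^{GT}}$ with $d_{\s C}$ and to rewrite $\FPdim(\s C^{GT})$ as $\FPdim(\im(F))$. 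By Proposition~\ref{Prop:EndOf1InZ(C)}, $\mathbb E(\s Z(\s C))$ is the subfield of $\mathbb E(\s C)$ fixed by all $\lambda_X\rho_X^{-1}$-type conditions, and the Galois trivial subcategory $\s C^{GT}$ has unit endomorphism field exactly $\mathbb E(\s C)$ again (the unit is always Galois trivial), so $d_{\s C^{GT}}=d_{\s C}$; and $\im(F)=\s C^{GT}$ as fusion categories, so $\FPdim(\im(F))=\FPdim(\s C^{GT})$. Substituting, and then multiplying numerator and denominator appropriately, produces exactly
\[\FPdim\big(\s Z(\s C)\big)\;=\;\left(\frac{d_{\s Z(\s C)}}{d_{\s C}}\right)\FPdim\big(\im(F)\big)\FPdim(\s C)\,.\]

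For the inequality and the equality case, I would argue as follows. Since $\im(F)=\s C^{GT}$ is a full fusion subcategory of $\s C$ closed under subobjects, and $\FPdim$ of a fusion category is additive over simple objects, $\FPdim(\im(F))\le\FPdim(\s C)$, with equality if and only if every simple object of $\s C$ is Galois trivial, i.e. all of $\s C$ is. It remains to control the factor $d_{\s Z(\s C)}/d_{\s C}$. Here the key observation is that $\mathbb E(\s Z(\s C))$ is a \emph{subfield} of $\mathbb E(\s C)$ by Proposition~\ref{Prop:EndOf1InZ(C)}, so $d_{\s Z(\s C)}\le d_{\s C}$, giving the factor $\le 1$; and $d_{\s Z(\s C)}=d_{\s C}$ precisely when the fixed-field condition is vacuous, i.e. when $\lambda_X(e)=\rho_X(e)$ for all $X$ and all $e\in\mathbb E(\s C)$ — again exactly the condition that all objects of $\s C$ are Galois trivial. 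Thus both factors are $\le 1$ and they simultaneously equal $1$ iff $\s C$ is Galois trivial throughout, whence $\FPdim(\s Z(\s C))\le\FPdim(\s C)^2$ with the stated equality case. The main obstacle I anticipate is the careful verification that $F$ corestricted to $\s C^{GT}$ really is dominant with $I$ (suitably corestricted) as honest adjoint, and that Proposition~\ref{Prop:FPdimsOfAdjoints} applies with $D=\1$ — i.e. checking that $F$ is strictly $\mathbb K$-bilinear on morphisms, not merely $\mathbb E(\s Z(\s C))$-bilinear — since the whole point of the nAC setting is that such linearity subtleties are exactly what break naively.
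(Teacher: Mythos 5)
Your proposal follows essentially the same route as the paper's proof: restrict $F$ to its dominant image, identify $F(I(\1))$ with the canonical coend $R$ so that $\FPdim(I(\1))=\FPdim(R)=\FPdim(\s C)$ via Theorems \ref{LeftAdjointFormula} and \ref{Thm:FPdimRIsFPdimC} together with Proposition \ref{fPlaysNiceWFPdims}, apply Proposition \ref{Prop:FPdimsOfAdjoints} with $D=\1$ and $X=\1$, observe $d_{\im(F)}=d_{\s C}$ since the two categories share a monoidal unit, and deduce the inequality and the equality case from $\FPdim(\im(F))\leq\FPdim(\s C)$ and $\mathbb E\big(\s Z(\s C)\big)\subseteq\mathbb E(\s C)$, with Theorem \ref{Thm:ImageOfForgetful=GTs} converting $\im(F)=\s C$ into Galois triviality. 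The one defect is a bookkeeping slip: taking $\s A=\s Z(\s C)$ and $\s B=\im(F)$ in Proposition \ref{Prop:FPdimsOfAdjoints} yields $\FPdim\big(I_*(\1)\big)=\frac{d_{\im(F)}}{d_{\s Z(\s C)}}\cdot\frac{\FPdim(\s Z(\s C))}{\FPdim(\im(F))}$, not its reciprocal as you wrote, and as a result your displayed intermediate identity $\FPdim\big(\s Z(\s C)\big)=\frac{d_{\s Z(\s C)}}{d_{\s C^{GT}}}\cdot\frac{\FPdim(\s C^{GT})}{\FPdim(\s C)}$ differs from the (correct) final formula you then assert by a factor of $\FPdim(\s C)^2$, so the step ``multiplying numerator and denominator appropriately'' is not a legitimate manipulation. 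Transcribing the proposition in the right orientation makes the rearrangement land directly on the stated formula with no further adjustment, and the rest of your argument goes through as written.
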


\begin{proof}
    Observe that the restricted forgetful functor $F_{*}:\s Z(\s C)\to\im(F)$ is monoidal and dominant by construction.  Let $I_*:\im(F)\to\s Z(\s C)$ be the left adjoint to $F_*$.  The functor $I_*$ can be computed by simply including the object into $\s C$, then applying $I$.
    
    From this, we can calculate
    \begin{gather*}
        \FPdim(\s C)\mathop{=}\limits^{\ref{Thm:FPdimRIsFPdimC}}\FPdim(R)
        \mathop{=}\limits^{\ref{LeftAdjointFormula}}\FPdim(FI\1)
        \mathop{=}\limits^{\ref{fPlaysNiceWFPdims}}\,\FPdim(I\1)
        =\FPdim(I_*\1)\\
        \mathop{=}\limits^{\ref{Prop:FPdimsOfAdjoints}}\,\FPdim(\1)\cdot\left(\frac{d_{\im(F)}}{d_{\s Z(\s C)}}\right)\cdot\frac{\FPdim\big(\s Z(\s C)\big)}{\FPdim\big(\im(F)\big)}\cdot\FPdim(\1)\\
        \;\;=\left(\frac{d_{\im(F)}}{d_{\s Z(\s C)}}\right)\cdot\frac{\FPdim\big(\s Z(\s C)\big)}{\FPdim\big(\im(F)\big)}\,.
    \end{gather*}
    The first claim follows from simply observing that $d_{\im(F)}=d_{\s C}$ as they have the same monoidal unit.
    
    Since $\im(F)$ is a full subcategory of $\mathcal C$, it follows that $\FPdim\big(\im(F)\big)\leq\FPdim(\mathcal C)$.  By Proposition \ref{Prop:EndOf1InZ(C)}, $\mathbb E\big(\mathcal Z(\mathcal C)\big)\subseteq\mathbb E(\mathcal C)$, and therefore $\frac{d_{\mathcal Z(\mathcal C)}}{d_{\mathcal C}}\leq1$.  These facts together establish the desired inequality.

    When all objects are Galois trivial, $\im(F)=\mathcal C$ by Theorem \ref{Thm:ImageOfForgetful=GTs}, and $\mathbb E\big(\mathcal Z(\mathcal C)\big)=\mathbb E(\mathcal C)$ by Proposition \ref{Prop:EndOf1InZ(C)}.  This establishes the `if' part of the final statement.

    For the `only if' part, suppose equality holds.  By dividing by $\FPdim(\mathcal C)^2$, this would imply that
    \[\left(\frac{d_{\s Z(\s C)}}{d_{\s C}}\right)\left(\frac{\FPdim\Big(\im(F)\Big)}{\FPdim(\s C)}\right)=1\,.\]
    Since we have already shown that the two factors on the left-hand side are each less than or equal to 1, it follows that they are both equal to 1.  In particular, $\FPdim\Big(\im(F)\Big)=\FPdim(\s C)$, and this can only happen when $\im(F)=\mathcal C$, so by Theorem \ref{Thm:ImageOfForgetful=GTs} all objects must be Galois trivial.
\end{proof}

\begin{example}
    Let $\mathbb L:=\mathbb Q(\zeta_7)$, where $\zeta_7$ is a primitive 7\textsuperscript{th} root of unity.  The Galois group of this extension is generated by the automorphism $\sigma:\zeta_7\mapsto\zeta_7^3$.  Consider the fusion category $\s C$ over $\mathbb Q$ whose simple objects are the $(\mathbb L,\mathbb L)$-bimodules $\mathbb L$, $\mathbb L_{\sigma^2}$ and $\mathbb L_{\sigma^4}$.  The fixed field of $\langle \sigma^2\rangle$ is the quadratic field $\mathbb Q(\sqrt{-7})$, and so $\s Z(\s C)\simeq\Vec_{\mathbb Q(\sqrt{-7})}$ and $d_{\s Z(\s C)}=2$.  We find that $\im(F)=\Vec_{\mathbb L}$, and so Theorem \ref{Thm:FPdimOfZ(C)} verifies that
    \[\FPdim\Big(\mathcal Z(\mathcal C)\Big)\;=\;1^2\;=\;\left(\frac{2}{6}\right)\cdot(1^2)\cdot(1^2+1^2+1^2)\;=\;\left(\frac{d_{\s Z(\s C)}}{d_{\s C}}\right)\FPdim\Big(\im(F)\Big)\FPdim(\s C)\,.\]
\end{example}

\appendix

\section{Separability Concerns}\label{Sec:Appendix}

The notion of a separable algebra is an important concept in the theory of associative algebras.  In the article \cite{ostrikModuleCatsWeakHopf}, Ostrik used the notion of separability of algebra objects as a stronger replacement for the property of semisimplicity.  This appendix addresses some, but not all of the issues regarding separability vs. semisimplicity.

Though the proof of Theorem \ref{SeparabilityImplications} is to our knowledge new, this appendix is primarily a collection of known results.  We encourage interested readers to look at \cite{kong2019semisimple} for a more in-depth investigation into separable algebras in multifusion categories.

\begin{definition}
    A $\mathbb K$-linear abelian category $\s C$ is said to be semisimple if all objects are projective.
\end{definition}

Semisimplicity is easy to define, and classically it is a component of the definition of a fusion category.  When allowing for non\textendash algebraically closed base fields, it turns out that semisimplicity is not well-behaved.  Specifically, there is a monoidal structure $\boxtimes$ on the 2-category of finite abelian $\mathbb K$-linear categories, and $\boxtimes$ does not preserve semisimplicity.  The next few definitions and examples clarify this inadequacy of semisimplicity, and serve as motivations for insisting that fusion categories be separable instead of just semisimple, as in Definition \ref{Def:mFus}.

\begin{definition}\cite[cf. Section 2.4]{lopezFrancoTensorProducts}\label{Def:DeligneTensor}
    Given two finite abelian $\mathbb K$-linear categories $\s C$ and $\s D$, their na\" ive tensor product $\s C\otimes_{\mathbb K}\s D$ is the category described as follows.
    \begin{itemize}
        \item Objects are formal pairs $(X,Y)$, which we will denote as $X\otimes_{\mathbb K}Y$, where $X$ is an object in $\s C$ and $Y$ is an object in $\s D$.
        \item Morphisms $f:X\otimes_{\mathbb K}Y\to U\otimes_{\mathbb K} V$ are elements of $\Hom(X,U)\otimes_{\mathbb K}\Hom(Y,V)$.
        \item Composition is given on simple tensors by $(f\otimes g)\circ(h\otimes k):=(f\circ h)\otimes(g\circ k)$, and extended to all morphisms by requiring bilinearity.
    \end{itemize}
    The (Deligne) tensor product $\mathcal C\boxtimes_{\mathbb K}\mathcal D$ is defined to be the completion of $\mathcal C\otimes_{\mathbb K}\mathcal D$ under finite colimits.
\end{definition}

\begin{example}\cite[cf. Section 2.1]{mugerFStCaTII}\label{Eg:SSDeligneDescription}
    Let $\mathcal C$ and $\mathcal D$ be two finite $\mathbb K$-linear abelian categories, and suppose that for any two objects $X\in\mathcal C$ and $Y\in\mathcal D$, the algebra $\End(X)\otimes_{\mathbb K}\End(Y)$ happens to be semisimple.  In this setting, the tensor product $\mathcal C\boxtimes_{\mathbb K}\mathcal D$ admits the following description:
    \begin{itemize}
        \item Objects are triples $(X,Y,e)$, where $X$ is an object in $\s C$ and $Y$ is an object in $\s D$, and where $e^2=e$ as an element of the algebra $\End(X)\otimes_{\mathbb K}\End(Y)$.
        \item Morphisms $f:(X,Y,e)\to(U,V,p)$ are elements of $\Hom(X,U)\otimes_{\mathbb K}\Hom(Y,V)$ that satisfy
        \[p\circ f\;=\;f\;=\;f\circ e\,.\]
        \item Composition is given on simple tensors by $(f\otimes g)\circ(h\otimes k):=\big((f\circ h)\otimes(g\circ k)\big)$, and extended to all morphisms by requiring bilinearity.
    \end{itemize}
    The object $(X,Y,\id_X\otimes\id_Y)$ is typically denoted $X\boxtimes Y$.
\end{example}

The above example demonstrates that, in the presence of semisimplicity, completion under finite colimits is equivalent to adding the images of all idempotents.  This construction is an example of something known as an idempotent completion (also known as Karoubi completion).  
This idea has been generalized to a notion of Karoubi completion for higher categories in \cite{gaiotto2019condensationshighercategories}
where idempotents are replaced with condensation monads.

\begin{remark}
    When simple objects $X$ in $\s C$ and $Y$ in $\s D$ are split, the algebra $\End(X)\otimes_{\mathbb K}\End(Y)\cong\mathbb K\otimes_{\mathbb K}\mathbb K\cong\mathbb K$, so the only nontrivial idempotent is $\id\otimes \id$.  This explains why the idempotent completion is not often discussed in the case where $\mathbb K$ is algebraically closed.
\end{remark}

Just as the tensor product of modules has an explicit construction as well as a definition via universal property, The tensor product $\boxtimes_{\mathbb K}$ can also be defined using a 2-universal property.  The `2' in 2-universal refers to the fact that this is a construction between objects in a 2-category, and thus this `definition' only specifies the category up to an equivalence, which is unique up to a unique 2-isomorphism.  Before stating the 2-universal property, we introduce some notation.

\begin{definition}
    Given any $n+1$ finite abelian $\mathbb K$-linear categories $\{\s C_i\}_{i=1}^n$ and $\s E$, there is a category $\Rex(\s C_1,\cdots,\s C_n;\s E)$.  The objects are functors from the cartesian product $\prod_{i}\s C_i\to \s E$ which are right-exact and $\mathbb K$-linear in each of the $\s C_i$, and the morphisms are natural transformations of such functors.
\end{definition}

\begin{proposition}\label{2Universal}
    The category $\s C\boxtimes_{\mathbb K} \s D$ satisfies the following universal property with respect to finite abelian categories over $\mathbb K$.  There exists a functor $Q_{\s C,\s D}:\s C\times\s D\to\s C\boxtimes_{\mathbb K}\s D$ such that, for any finite abelian $\mathbb K$-linear category $\s E$, restriction along this functor induces an equivalence
    \[\Rex(\s C\boxtimes_{\mathbb K}\s D,\s E)\mathop{\longrightarrow}\limits^{\simeq}\Rex(\s C,\s D;\s E)\,.\]
\end{proposition}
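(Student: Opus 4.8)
The plan is to build the comparison functor $Q_{\s C,\s D}$ explicitly and then verify the equivalence by hand, relying on the finiteness of all categories involved so that everything can be reduced to a statement about finitely many idempotents and hom-spaces. First I would take $Q_{\s C,\s D}$ to be the composite $\s C\times\s D\to\s C\otimes_{\mathbb K}\s D\hookrightarrow\s C\boxtimes_{\mathbb K}\s D$, where the first arrow sends $(X,Y)$ to the formal pair $X\otimes_{\mathbb K}Y$ and the second is the canonical inclusion into the finite-colimit completion. This $Q$ is $\mathbb K$-linear and right-exact in each variable essentially because $\Hom_{\s C\otimes_{\mathbb K}\s D}(X\otimes_{\mathbb K}Y,-)=\Hom_{\s C}(X,-)\otimes_{\mathbb K}\Hom_{\s D}(Y,-)$ and tensoring over a field is exact; right-exactness in each slot then follows from right-exactness of each $\Hom_{\s C}(X,-)$ and $\Hom_{\s D}(Y,-)$ together with the fact that the embedding into the colimit-completion preserves the relevant colimits.

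Next I would construct the inverse to the restriction functor $\Rex(\s C\boxtimes_{\mathbb K}\s D,\s E)\to\Rex(\s C,\s D;\s E)$. Given a right-exact, separately $\mathbb K$-linear $G:\s C\times\s D\to\s E$, one first extends it to $\s C\otimes_{\mathbb K}\s D$ by the assignment $X\otimes_{\mathbb K}Y\mapsto G(X,Y)$ on objects and, on a morphism $f\otimes g\in\Hom_{\s C}(X,U)\otimes_{\mathbb K}\Hom_{\s D}(Y,V)$, by the image of $f\otimes g$ under the bilinear map $\Hom_{\s C}(X,U)\times\Hom_{\s D}(Y,V)\to\Hom_{\s E}(G(X,Y),G(U,V))$ coming from functoriality of $G$ in each slot; bilinearity of this map is exactly what makes the extension well-defined on the tensor product of hom-spaces. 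Because $\s C\boxtimes_{\mathbb K}\s D$ is the free completion of $\s C\otimes_{\mathbb K}\s D$ under finite colimits, this extension then extends uniquely (up to unique isomorphism) to a right-exact functor $\widehat G:\s C\boxtimes_{\mathbb K}\s D\to\s E$, provided $\s E$ has the finite colimits in question — which it does, being abelian. One checks that $\widehat G$ is $\mathbb K$-linear and that $\widehat G\circ Q\cong G$, and conversely that any right-exact $H:\s C\boxtimes_{\mathbb K}\s D\to\s E$ is recovered, up to natural isomorphism, from its restriction $H\circ Q$, again by the universal property of colimit-completion together with the observation that $\s C\otimes_{\mathbb K}\s D$ generates $\s C\boxtimes_{\mathbb K}\s D$ under finite colimits. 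The same universal-property bookkeeping on natural transformations shows the functor is fully faithful, so it is an equivalence.

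The main obstacle, and the step where real care is needed, is verifying that the extension of $G$ from $\s C\times\s D$ through $\s C\otimes_{\mathbb K}\s D$ to all of $\s C\boxtimes_{\mathbb K}\s D$ is genuinely right-exact in the combined sense — that is, that it sends the cokernels adjoined in the Karoubi/colimit completion to the corresponding cokernels in $\s E$ — and that this is done coherently, so the assignment $G\mapsto\widehat G$ is functorial in $G$ and inverse to restriction up to coherent natural isomorphism rather than merely objectwise. This is where one must invoke finiteness: every object of $\s C\boxtimes_{\mathbb K}\s D$ is a finite colimit of objects in the image of $Q$, so right-exactness of $\widehat G$ is forced on all of $\s C\boxtimes_{\mathbb K}\s D$ once it is known on the image of $Q$ and on the universal cocones. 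I would either cite \cite{lopezFrancoTensorProducts} or \cite{gaiottoCondensationsHigherCategories2019} for the precise 2-categorical statement of the universal property of the completion, or spell out the colimit-completion argument directly; in either case the content is the standard fact that Karoubi/finite-colimit completion is left 2-adjoint to the forgetful 2-functor, specialized to the bilinear setting, so no new difficulty beyond careful 2-categorical bookkeeping arises.
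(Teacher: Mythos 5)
The paper itself offers no proof of this proposition: it is imported from \cite{lopezFrancoTensorProducts} (see the citation at Definition \ref{Def:DeligneTensor} and Remark \ref{AltDefTensorProd}), so there is no in-text argument to compare yours against. Judged on its own terms, your proposal follows the natural route but has a genuine gap at exactly the step you flag as ``the main obstacle,'' and the tool you invoke to close it does not close it. The 2-universal property of the \emph{free} finite-colimit completion identifies $\Rex(\s C\boxtimes_{\mathbb K}\s D,\s E)$ with \emph{all} $\mathbb K$-linear functors out of $\s C\otimes_{\mathbb K}\s D$, i.e.\ with all separately $\mathbb K$-bilinear functors $\s C\times\s D\to\s E$, with no exactness condition on that side. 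The proposition asserts an equivalence with $\Rex(\s C,\s D;\s E)$, the separately \emph{right-exact} bilinear functors, which is a strictly smaller category whenever $\s C$ or $\s D$ is not semisimple. Bridging that mismatch requires proving that $Q_{\s C,\s D}$ is right exact in each variable (so that restriction lands where it should) and that the extension of a separately right-exact $G$ remains right exact on the completed category; the only justification you offer for the first point --- ``right-exactness of each $\Hom_{\s C}(X,-)$'' --- is incorrect, since covariant $\Hom$ is left exact, not right exact. This is precisely the hard content of Deligne's construction, and it is the reason the genuine Deligne product of non-semisimple finite abelian categories is not simply the free finite-colimit (or Karoubi) completion of the na\"ive product.

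In the setting this paper actually uses, the gap is repairable: the categories in play are separable, hence semisimple, so by Example \ref{Eg:SSDeligneDescription} the completion reduces to an idempotent completion, every additive functor out of a semisimple category is exact, and the two universal properties coincide --- your argument then goes through essentially verbatim. But as a proof of the proposition for arbitrary finite abelian $\mathbb K$-linear categories it does not stand; you should either restrict the claim to the semisimple case and say so, or do what the paper does and cite \cite{lopezFrancoTensorProducts} for the general statement rather than deriving it from the free-completion adjunction.
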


\begin{remark}\label{AltDefTensorProd}
    A common alternative definition of the operation $\s C\boxtimes_{\mathbb K}\s D$ is any pair $(\s C\boxtimes_{\mathbb K}\s D\,,\,Q_{\s C,\s D})$ that satisfies the 2-universal property of Proposition \ref{2Universal}.  Practitioners of algebra will be well aware of the usefulness of such an alternative characterization.  For example with $\otimes$, if one wishes to determine a map out of a tensor product, the universal property is what allows one to define the map on simple tensors.  However just as some universal objects don't exist in all categories, there are certain linear 2-categories where such a $\boxtimes$ fails to exist.  For more on the various settings in which such a universal category exists, see \cite{lopezFrancoTensorProducts}.
\end{remark}

Now let us explore some properties of the $\boxtimes_{\mathbb K}$ construction.

\begin{proposition}\cite[Example 11]{lopezFrancoTensorProducts}\label{ABMod}
    Let $A$ and $B$ be finite dimensional algebras over $\mathbb K$.  There is a canonical equivalence of categories
    \[A\Mod\boxtimes_{\mathbb K}B\Mod\simeq(A\otimes_{\mathbb K}B)\Mod\]
\end{proposition}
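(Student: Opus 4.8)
The plan is to verify directly that $(A\otimes_{\mathbb K}B)\Mod$ satisfies the $2$-universal property of Proposition~\ref{2Universal}, and then invoke Remark~\ref{AltDefTensorProd}. The comparison bifunctor is
\[Q\colon A\Mod\times B\Mod\longrightarrow(A\otimes_{\mathbb K}B)\Mod,\qquad (M,N)\longmapsto M\otimes_{\mathbb K}N,\]
where $M\otimes_{\mathbb K}N$ carries the action $(a\otimes b)\cdot(m\otimes n)=am\otimes bn$. Since $\otimes_{\mathbb K}$ is exact over a field, $Q$ is right-exact and $\mathbb K$-linear in each variable, so for every finite abelian $\mathbb K$-linear category $\s E$ it induces a restriction functor $\Rex\big((A\otimes_{\mathbb K}B)\Mod,\s E\big)\to\Rex\big(A\Mod,B\Mod;\s E\big)$; the whole point is that each of these is an equivalence.

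The engine is an Eilenberg--Watts--type lemma: for any finite-dimensional $\mathbb K$-algebra $R$ and any $\mathbb K$-linear abelian category $\s E$, evaluation at the regular module ${}_RR$ gives an equivalence from $\Rex(R\Mod,\s E)$ to the category of pairs $(E,\alpha)$, where $E\in\s E$ and $\alpha\colon R^{\mathrm{op}}\to\End_{\s E}(E)$ is a $\mathbb K$-algebra map, with morphisms the $\alpha$-equivariant maps of $\s E$. Indeed ${}_RR$ is a projective generator of $R\Mod$ with $\End({}_RR)\cong R^{\mathrm{op}}$; every module has a finite free presentation ${}_RR^{\oplus n}\xrightarrow{\psi}{}_RR^{\oplus m}\twoheadrightarrow M$ in which $\psi$ is a matrix over $R^{\mathrm{op}}$; a right-exact $\mathbb K$-linear $F$ is therefore forced to be $M\mapsto\operatorname{coker}\big(F(\psi)\big)$, which depends only on $E:=F({}_RR)$ and the induced action $\alpha$; and conversely any datum $(E,\alpha)$ builds such an $F$ by this same cokernel recipe. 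I expect the one genuinely technical point to be the routine check that this reverse assignment is well defined (independent of the chosen presentation) and functorial; this is exactly the content of the Eilenberg--Watts theorem and is standard.

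Granting the lemma, the two sides match on the nose. Applying it with $R=A\otimes_{\mathbb K}B$ identifies $\Rex\big((A\otimes_{\mathbb K}B)\Mod,\s E\big)$ with the category of objects $E\in\s E$ equipped with a $\mathbb K$-algebra map $(A\otimes_{\mathbb K}B)^{\mathrm{op}}\to\End_{\s E}(E)$; since $(A\otimes_{\mathbb K}B)^{\mathrm{op}}\cong A^{\mathrm{op}}\otimes_{\mathbb K}B^{\mathrm{op}}$, such a map is precisely a pair of $\mathbb K$-algebra maps $A^{\mathrm{op}}\to\End_{\s E}(E)$ and $B^{\mathrm{op}}\to\End_{\s E}(E)$ with commuting images. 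On the other side, currying gives $\Rex\big(A\Mod,B\Mod;\s E\big)\simeq\Rex\big(A\Mod,\Rex(B\Mod,\s E)\big)$; applying the lemma in the inner slot replaces $\Rex(B\Mod,\s E)$ by the $\mathbb K$-linear abelian category of objects of $\s E$ with a $B^{\mathrm{op}}$-action, and applying it once more in the outer slot adds a commuting $A^{\mathrm{op}}$-action, yielding exactly the same data. Finally one checks that under these identifications the restriction functor along $Q$ is the identity: in both descriptions the underlying object is $Q({}_AA,{}_BB)={}_{A\otimes B}(A\otimes B)$, and the two actions agree because $A^{\mathrm{op}}\otimes 1$ together with $1\otimes B^{\mathrm{op}}$ generate $(A\otimes_{\mathbb K}B)^{\mathrm{op}}$. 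Hence restriction along $Q$ is an equivalence for every $\s E$, so $\big((A\otimes_{\mathbb K}B)\Mod,Q\big)$ enjoys the $2$-universal property and is therefore canonically equivalent to $A\Mod\boxtimes_{\mathbb K}B\Mod$.

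As a cross-check there is a slicker but less elementary route: $Q({}_AA,{}_BB)$ lives in the naive tensor product $A\Mod\otimes_{\mathbb K}B\Mod$, which embeds fully faithfully into $A\Mod\boxtimes_{\mathbb K}B\Mod$, so its endomorphism algebra there is $\End({}_AA)\otimes_{\mathbb K}\End({}_BB)=(A\otimes_{\mathbb K}B)^{\mathrm{op}}$; if one shows this object is a projective generator of $A\Mod\boxtimes_{\mathbb K}B\Mod$, the reconstruction principle that a finite abelian $\mathbb K$-linear category with projective generator $P$ is $\Hom(P,-)$-equivalent to $\rMod\End(P)$ gives $A\Mod\boxtimes_{\mathbb K}B\Mod\simeq\rMod(A\otimes_{\mathbb K}B)^{\mathrm{op}}=(A\otimes_{\mathbb K}B)\Mod$ at once. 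The obstacle in that version is verifying projectivity and generation after the colimit completion.
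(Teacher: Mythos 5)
Your proof is correct and follows essentially the same route as the paper: both verify the $2$-universal property of Proposition~\ref{2Universal} by using the Eilenberg--Watts theorem to identify $\Rex\big((A\otimes_{\mathbb K}B)\Mod,\s E\big)$ and $\Rex\big(A\Mod,B\Mod;\s E\big)$ with the same data. The only cosmetic difference is that the paper first presents $\s E$ as $E\Mod$ and phrases Eilenberg--Watts in terms of bimodules, closing with an appeal to the $2$-Yoneda lemma, whereas you phrase it as ``objects of $\s E$ with an $R^{\mathrm{op}}$-action'' and explicitly trace that the identification is implemented by restriction along $Q$ --- a point the paper leaves implicit.
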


\begin{proof}
    Let $\s E$ be an additional finite abelian $\mathbb K$-linear category.  We can find an algebra $E$ such that $\s E\simeq E\Mod$.  This allows for the following computation.
    \begin{align*}
        \Rex(A\Mod\boxtimes_{\mathbb K}B\Mod,\s E)&\simeq\Rex(A\Mod\boxtimes_{\mathbb K}B\Mod,E\Mod)\\
        &\simeq\Rex(A\Mod,B\Mod;E\Mod)\\
        &\simeq\Rex\big(A\Mod,\Rex(B\Mod;E\Mod)\big)\\
        &\simeq\Rex\big(A\Mod,(E\otimes_{\mathbb K}B^{op})\Mod)\big)\\
        &\simeq\big((E\otimes_{\mathbb K}B^{op})\otimes_{\mathbb K}A^{op}\big)\Mod\\
        &\simeq\big(E\otimes_{\mathbb K}(B^{op}\otimes_{\mathbb K}A^{op})\big)\Mod\\
        &\simeq\big(E\otimes_{\mathbb K}(A\otimes_{\mathbb K}B)^{op}\big)\Mod\\
        &\simeq\Rex\big((A\otimes_{\mathbb K}B)\Mod,E\Mod\big)\\
        &\simeq\Rex\big((A\otimes_{\mathbb K}B)\Mod,\s E\big)\,.
    \end{align*}
    Here we have used the famous Eilenberg-Watts theorem to identify right exact functors with bimodules.  By plugging in $A\Mod\boxtimes_{\mathbb K}B\Mod$ in place of $\s E$, the above sequence produces the desired functor $F:(A\otimes_{\mathbb K}B)\Mod\to A\Mod\boxtimes_{\mathbb K}B\Mod$.  That $F$ is an equivalence follows from a version of the 2-Yoneda lemma, see e.g. \cite{johnson2DCats}.
\end{proof}

\begin{remark}
    For more on the Eilenberg-Watts theorem, see e.g. \cite{nymanAGeneralization} in the classical setting, or \cite{fuchsEWCalcRadford} for more modern applications.
\end{remark}

\begin{example}\label{Eg:SemXSem=NonSem}
    Let $\mathbb L$ be a (nontrivial) purely inseparable field extension of $\mathbb K$.  The category $\Vec_{\mathbb L}$ is semisimple, since all vector spaces are free and hence projective.  By Proposition \ref{ABMod}, the category $\s C:=\Vec_{\mathbb L}\boxtimes_{\mathbb K}\Vec_{\mathbb L}$ satisfies
    \[\Vec_{\mathbb L}\boxtimes_{\mathbb K}\Vec_{\mathbb L}\simeq(\mathbb L\otimes_{\mathbb K}\mathbb L)\Mod\,.\]
    Since $\mathbb L$ is purely inseparable, the algebra $(\mathbb L\otimes_{\mathbb K}\mathbb L)$ contains nontrivial nilpotent elements, say $x$.  The ideal $(x)$ is not projective as an $(\mathbb L\otimes_{\mathbb K}\mathbb L)$-module, and hence $\s C$ is non-semisimple.
\end{example}

The above example shows that semisimplicity is not preserved under $\boxtimes_{\mathbb K}$.  In order to establish a satisfactory theory for arbitrary fields, we would like to have a class of categories that is closed under $\boxtimes_{\mathbb K}$.  We focuses on the more restrictive class of \emph{separable} categories for this reason.  The definition of a separable category depends on the notion of a separable algebra, which we now define.

For the remainder of the appendix, we will occasionally use a more condensed notation for categories of (bi)modules.  For an algebra $A$ in a category $\mathcal C$, $_A\mathcal C=A\Mod$ is the category of left modules for $A$, $\mathcal C_A=\text{Mod-}A$ is the category of right modules, and $_A\mathcal C_A=(A,A)\Bim$ is the category of bimodules.

\begin{definition}
    An algebra $(A,\mu,\eta)$ is said to be separable if there exists a morphism $\theta:A\to A\otimes A$ in $_A\s C_A$, such that $\mu\circ\theta=\id_A$.
\end{definition}

\begin{example}
    For any field extension $\mathbb L/\mathbb K$, the field $\mathbb L$ is a separable algebra in $\Vec_{\mathbb K}
    $ if and only if $\mathbb L/\mathbb K$ is a separable extension.
\end{example}

\begin{example}
    Let $A=M_n(\mathbb K)$ be the $n\times n$ matrix algebra over $\mathbb K$, with $\mu(a,b)=a\cdot b$ being ordinary multiplication of matrices.  Then for any $1\leq j\leq n$, we can define the map
    \[\theta_j:A\longrightarrow A\otimes A\;,\;\;\;a\longmapsto\sum_{i=1}^na\cdot e_{i,j}\otimes e_{j,i}\,,\]
    where the $e_{i,j}$ are the elementary matrices.  From the multiplication rules in $A$, it is clear that $\mu\circ\theta_j=\id_{A}$.  We can then check that
    \begin{align*}
        \theta_j(e_{p,q})&=\sum_{i=1}^ne_{p,q}\cdot e_{i,j}\otimes e_{j,i}\;=\;\sum_{i=1}^n\delta_{q,i}\cdot e_{p,j}\otimes e_{j,i}\\
        &=e_{p,j}\otimes e_{j,q}\\
        &=\sum_{i=1}^ne_{i,j}\otimes e_{j,q}\cdot \delta_{p,i}\;=\;\sum_{i=1}^ne_{i,j}\otimes e_{j,i}\cdot e_{p,q}\;=\;\theta_j(1)\cdot e_{p,q}\,.
    \end{align*}
    Since the $e_{p,q}$ form a basis for $A$ as a $\mathbb K$-algebra, the above computation implies that
    \[a\cdot\theta_j(b)\;=\;\theta_j(ab)\;=\;\theta_j(a)\cdot b\,,\]
    or in other words $\theta_j$ is a morphism in $_A\mathcal C_A$.  Thus $A$ is a separable algebra.

\end{example}

\begin{example}\label{SeparableAW}
    More generally, it can be shown (see for e.g. \cite{yuanSeparableAlgebras2016}) that all finite dimensional separable algebras over $\mathbb K$ are direct sums of matrix algebras over division rings whose centers are separable extensions of $\mathbb K$.  Note the similarity between this classification result and the classical Artin-Wedderburn Theorem.
\end{example}

\begin{proposition}\label{AlgSepXSep=Sep}
    Given two separable algebras $A$ and $B$ in $\Vec_{\mathbb K}$, the tensor product $A\otimes B$ is also a separable algebra in $\Vec_{\mathbb K}$.
\end{proposition}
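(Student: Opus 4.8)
The plan is to build a separability section for $A\otimes B$ directly out of those for $A$ and $B$. Conceptually this is the observation that the tensor-product functor $\otimes\colon\Vec_{\mathbb K}\times\Vec_{\mathbb K}\to\Vec_{\mathbb K}$ is symmetric monoidal, hence carries an algebra in the product category (a pair of algebras) to an algebra, and separability data to separability data; but it is worth writing out. By hypothesis there is an $A$-bimodule map $\theta_A\colon A\to A\otimes A$ with $\mu_A\circ\theta_A=\id_A$, and a $B$-bimodule map $\theta_B\colon B\to B\otimes B$ with $\mu_B\circ\theta_B=\id_B$. Writing $\tau$ for the symmetry of $\Vec_{\mathbb K}$ and suppressing associators, I would set
\[
\theta_{A\otimes B}\;:=\;\big(\id_A\otimes\tau_{A,B}\otimes\id_B\big)\circ\big(\theta_A\otimes\theta_B\big)\;\colon\;A\otimes B\;\longrightarrow\;(A\otimes B)\otimes(A\otimes B)\,,
\]
the middle swap $\tau_{A,B}$ being exactly what turns $A\otimes A\otimes B\otimes B$ into $A\otimes B\otimes A\otimes B$.

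First I would check the section identity $\mu_{A\otimes B}\circ\theta_{A\otimes B}=\id_{A\otimes B}$. Writing the multiplication of $A\otimes B$ as $\mu_{A\otimes B}=(\mu_A\otimes\mu_B)\circ(\id_A\otimes\tau_{B,A}\otimes\id_B)$, the swap inside $\mu_{A\otimes B}$ is the inverse of the one defining $\theta_{A\otimes B}$, so the two cancel and one is left with $(\mu_A\otimes\mu_B)\circ(\theta_A\otimes\theta_B)=(\mu_A\circ\theta_A)\otimes(\mu_B\circ\theta_B)=\id_A\otimes\id_B$. On elements, with $\theta_A(a)=\sum_i a_i'\otimes a_i''$ and $\theta_B(b)=\sum_j b_j'\otimes b_j''$, this is the one-line computation $\sum_{i,j}a_i'a_i''\otimes b_j'b_j''=a\otimes b$.

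Next I would verify that $\theta_{A\otimes B}$ is a morphism of $(A\otimes B)$-bimodules, where $(A\otimes B)\otimes(A\otimes B)$ carries its outer bimodule structure (left $A\otimes B$ acting on the first tensorand, right $A\otimes B$ on the second). Here I would use that $\theta_A\otimes\theta_B\colon A\otimes B\to(A\otimes A)\otimes(B\otimes B)$ is a bimodule map for the product-category structures — immediate, since each $\theta$ is a bimodule map and $\otimes$ is functorial — and that the middle swap $\id_A\otimes\tau_{A,B}\otimes\id_B$ is precisely the canonical coherence isomorphism identifying the ``outer'' $(A,B)$-bimodule $(A\otimes A)\otimes(B\otimes B)$ with the $(A\otimes B)$-bimodule $(A\otimes B)\otimes(A\otimes B)$, hence itself a bimodule map by symmetric-monoidal coherence. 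Composing, $\theta_{A\otimes B}$ is a bimodule map, which together with the section identity exhibits $A\otimes B$ as separable.

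I do not expect a serious obstacle: the whole content is coherence bookkeeping in $\Vec_{\mathbb K}$. The only point requiring care is the placement of the swap $\tau_{A,B}$, which must sit in the middle so as to be compatible simultaneously with the multiplication of $A\otimes B$ (where $\tau_{B,A}$ appears) and with the outer bimodule structure on $(A\otimes B)^{\otimes 2}$. One could instead invoke Example~\ref{SeparableAW} and write $A$ and $B$ as direct sums of matrix algebras over division algebras with separable centres, but then one still has to show the tensor product decomposes the same way, which reduces to the nontrivial fact that a tensor product of separable field extensions is a finite product of separable field extensions; the direct construction above sidesteps this.
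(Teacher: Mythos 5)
Your construction is exactly the one the paper uses: the splitting $(\id_A\otimes\sigma_{A,B}\otimes\id_B)\circ(\theta_A\otimes\theta_B)$ with the middle swap cancelling against the $\sigma_{B,A}$ inside $\mu_{A\otimes B}$, followed by the bimodule check. Your write-up is correct and simply spells out the coherence details that the paper leaves as ``easily verified.''
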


\begin{proof}
    Let $\sigma_{V,W}:V\otimes W\to W\otimes V$ be the swap map in $\Vec_{\mathbb K}$.  The algebra structure on $A\otimes B$ is defined to be
    \[\mu_{A\otimes B}:=(\mu_A\otimes\mu_B)\circ(\id_A\otimes\sigma_{B,A}\otimes \id_B)\,.\]
    By the separability of $A$ and $B$, we have splittings $\theta_A$ and $\theta_B$.  From the fact that $\sigma_{A,B}^{-1}=\sigma_{B,A}$, it follows that the composition
    \[\theta_{A\otimes B}:=(\id_A\otimes\sigma_{A,B}\otimes\id_B)\circ(\theta_A\otimes\theta_B)\]
    is a splitting for $\mu_{A\otimes B}$.  The bimodule conditions on $\theta_{A\otimes B}$ are easily verified, and so $A\otimes B$ is separable.
\end{proof}

\begin{definition}\label{Def:Separable}
    A finite abelian $\mathbb K$-linear category $\s C$ is said to be separable over $\mathbb K$ if there exists a separable algebra $A$ in $\Vec_{\mathbb K}$ such that $A\Mod\simeq\s C$.  Often we will simply say that a category is separable when the base field $\mathbb K$ can be inferred.
\end{definition}

\begin{remark}
    Separable categories are finite abelian $\mathbb K$-linear by definition.  More general notions of separability certainly warrant investigation, but we choose not to consider them here.
\end{remark}

The primary value of this class of category comes from the following property.

\begin{proposition}\label{CatSepXSep=Sep}
    Given two separable categories $\s C$ and $\s D$ over $\mathbb K$, the product $\s C\boxtimes_{\mathbb K}\s D$ is also separable over $\mathbb K$.
\end{proposition}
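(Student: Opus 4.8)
The plan is to reduce the statement entirely to three facts already in hand: the module-category description of separable categories (Definition \ref{Def:Separable}), the identification of $\boxtimes_{\mathbb K}$ of module categories with a module category over the tensor-product algebra (Proposition \ref{ABMod}), and the closure of separable algebras under $\otimes_{\mathbb K}$ (Proposition \ref{AlgSepXSep=Sep}). In effect the proof is a chain of equivalences.

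First I would apply Definition \ref{Def:Separable} to choose separable algebras $A$ and $B$ in $\Vec_{\mathbb K}$ together with equivalences $\s C\simeq A\Mod$ and $\s D\simeq B\Mod$. Because $\boxtimes_{\mathbb K}$ is characterized by the $2$-universal property of Proposition \ref{2Universal}, equivalences in either argument induce an equivalence of Deligne products, so these give $\s C\boxtimes_{\mathbb K}\s D\simeq A\Mod\boxtimes_{\mathbb K}B\Mod$. Next, Proposition \ref{ABMod} supplies an equivalence $A\Mod\boxtimes_{\mathbb K}B\Mod\simeq(A\otimes_{\mathbb K}B)\Mod$. Composing, one obtains a single equivalence $\s C\boxtimes_{\mathbb K}\s D\simeq(A\otimes_{\mathbb K}B)\Mod$.

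Finally, Proposition \ref{AlgSepXSep=Sep} tells us that $A\otimes_{\mathbb K}B$ is a separable algebra in $\Vec_{\mathbb K}$; it is finite dimensional over $\mathbb K$ since $A$ and $B$ are, so $(A\otimes_{\mathbb K}B)\Mod$ is a finite abelian $\mathbb K$-linear category. Thus $(A\otimes_{\mathbb K}B)\Mod$ exhibits $\s C\boxtimes_{\mathbb K}\s D$ as a category of modules over a separable algebra, which is exactly the requirement of Definition \ref{Def:Separable}, and we conclude that $\s C\boxtimes_{\mathbb K}\s D$ is separable over $\mathbb K$.

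I do not expect a genuine obstacle here: the content has been front-loaded into the cited results, and what remains is bookkeeping. The only point meriting a sentence of care is that $\boxtimes_{\mathbb K}$ is defined only up to equivalence, so one should make explicit that the displayed chain composes to a single equivalence $\s C\boxtimes_{\mathbb K}\s D\simeq(A\otimes_{\mathbb K}B)\Mod$; this is immediate from Proposition \ref{2Universal}, since restriction along equivalent functors out of equivalent categories induces the same equivalence of $\Rex$-categories.
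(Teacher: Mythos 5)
Your proposal is correct and is exactly the paper's argument: the paper's proof is the one-liner ``Combine Proposition \ref{ABMod} with Proposition \ref{AlgSepXSep=Sep},'' and you have simply spelled out the same chain of equivalences with the bookkeeping made explicit. No gaps and no divergence from the paper's route.
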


\begin{proof}
    Combine Example \ref{ABMod} with Proposition \ref{AlgSepXSep=Sep}.
\end{proof}

\begin{lemma}\cite[cf. Prop 4.2]{kong2019semisimple}\label{CatsOfMods4ASepAlgAreSS}
    For any separable algebra $A$ in a finite semisimple $\mathbb K$-linear abelian monoidal category $\s C$, the categories $\s C_A$, $_A\s C$ and $_A\s C_A$ are semisimple.
\end{lemma}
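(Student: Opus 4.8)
The plan is to invoke the paper's own notion of semisimplicity: a $\mathbb K$-linear abelian category is semisimple exactly when every object is projective. Each of $\s C_A$, ${}_A\s C$ and ${}_A\s C_A$ is $\mathbb K$-linear abelian, with kernels, cokernels and biproducts formed underneath in $\s C$ and inheriting the relevant (bi)module structure — here one uses that every short exact sequence in $\s C$ splits, so that $A\otimes(-)$, like any additive endofunctor, is exact and preserves the (co)kernels that arise. Hence the whole content is to show that every object in each of these three categories is projective, and I will extract this from the separability datum $\theta\colon A\to A\otimes A$ in ${}_A\s C_A$ satisfying $\mu\circ\theta=\id_A$.

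Consider first ${}_A\s C$. Given a left $A$-module $(M,\rho)$ with underlying object $M_0$, the action $\rho\colon A\otimes M_0\to M$ is a (split) epimorphism in ${}_A\s C$ from the free module $A\otimes M_0$, and I build a section of $\rho$ internal to ${}_A\s C$ out of $\theta$: set
\[
\sigma_M\;=\;(\id_A\otimes\rho)\circ a_{A,A,M}\circ\big((\theta\circ\eta)\otimes\id_M\big)\circ\ell_M^{-1}\;\colon\;M\longrightarrow A\otimes M_0,
\]
where $a$ is the associativity constraint and $\ell$ the left unit constraint. Using $\mu\circ\theta=\id_A$ together with the associativity and unit axioms of the module $M$, one checks that $\rho\circ\sigma_M=\id_M$; and using that $\theta$ is a morphism of left $A$-modules, one checks that $\sigma_M$ is itself a morphism of left $A$-modules. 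Thus $M$ is a retract, in ${}_A\s C$, of $A\otimes M_0$.

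To finish, observe that $A\otimes M_0$ is projective in ${}_A\s C$: since $\s C$ is semisimple, $M_0$ is projective in $\s C$, and the free functor $A\otimes(-)\colon\s C\to{}_A\s C$ is left adjoint to the exact forgetful functor $U$, so that $\Hom_{{}_A\s C}(A\otimes M_0,-)\cong\Hom_{\s C}(M_0,U(-))$ is a composite of exact functors. A retract of a projective is projective, so $M$ is projective and ${}_A\s C$ is semisimple. The argument for $\s C_A$ is the mirror image, using that $\theta$ is also a morphism of right $A$-modules. For ${}_A\s C_A$, one identifies $(A,A)$-bimodules with left $A$-module objects in the left $\s C$-module category $\s C_A$, which is semisimple by the previous step; the construction of $\sigma_M$ from $\theta$ and the argument that the free functor preserves projectives both go through verbatim in this relative setting, since neither used anything beyond a semisimple ambient category carrying a compatible action of $\s C$.

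The only step that is not pure formalism is the verification that $\sigma_M$ intertwines the $A$-actions, rather than merely splitting $\rho$ in $\s C$ — this is exactly where genuine separability is used (the bimodule-morphism property of $\theta$, not just a one-sided section of $\mu$ in $\s C$). Concretely it is the monoidal-category incarnation of the classical averaging identity $\sum a\,a_i\otimes b_i=\sum a_i\otimes b_i\,a$ for a separability idempotent $\sum a_i\otimes b_i$, and carrying it out requires some care threading the associativity and unit constraints of $\s C$ through a string diagram; everything else is standard bookkeeping with adjunctions and retracts.
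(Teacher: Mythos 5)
Your proof is correct and follows essentially the same route as the paper's: use the separability splitting $\theta$ to exhibit $M$ as a retract in ${}_A\s C$ of the free module $A\otimes M$, then deduce projectivity of free modules from semisimplicity of $\s C$ via the free--forgetful adjunction (the paper phrases this as a lifting-diagram argument, you as exactness of $\Hom_{{}_A\s C}(A\otimes M_0,-)\cong\Hom_{\s C}(M_0,U(-))$, which is the same thing). Your explicit attention to the point that $\sigma_M$ must be an $A$-module map, which is where the bimodule property of $\theta$ enters, is a detail the paper leaves implicit.
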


\begin{proof}
    By separability of $A$, choose some bimodule splitting $\theta:A\to A\otimes A$ of the multiplication map.  For any $A$-module $M$, the map $\theta$ induces a splitting $\theta_M:M\hookrightarrow A\otimes M$ of the module multiplication $\mu_M$ as follows.
    \[\begin{tikzcd}
    	M & {\1\otimes M} & {A\otimes M} \\
    	{A\otimes M} & {A\otimes (A\otimes M)} & {(A\otimes A)\otimes M\;.}
    	\arrow["\cong", from=1-1, to=1-2]
    	\arrow["\eta", hook, from=1-2, to=1-3]
    	\arrow["{\theta\otimes\id_M}", hook, from=1-3, to=2-3]
    	\arrow["\cong"', from=2-3, to=2-2]
    	\arrow["{\id_A\otimes\mu_M}"', from=2-2, to=2-1]
    	\arrow["{\theta_M}"', dashed, from=1-1, to=2-1]
    \end{tikzcd}\]
    This inclusion realizes $M$ as a summand of the free $A$-module $A\otimes M$. Thus if we can show that $A\otimes M$ is projective as a module, then $M$ must be projective as a module and the whole category is semisimple, since $M$ was arbitrary.
    
    Consider the lifting diagram below on the left.
    \vspace{2pt}
    \[\begin{tikzcd}
    	{_A\mathcal C} && {} & {\mathcal C} \\
    	& V &&& V \\
    	{A\otimes M} & W && M & W \\
    	&& {}
    	\arrow[two heads, from=2-2, to=3-2]
    	\arrow[from=3-1, to=3-2]
    	\arrow[dashed, from=3-1, to=2-2]
    	\arrow[dotted, no head, from=1-3, to=4-3]
    	\arrow[dashed, from=3-4, to=2-5]
    	\arrow[from=3-4, to=3-5]
    	\arrow[two heads, from=2-5, to=3-5]
    \end{tikzcd}\]
    By applying the free-forgetful adjunction, the diagram on the left becomes the diagram on the right.  Since we have assumed $\s C$ to be semisimple, the lifting problem on the right has a solution.  By applying the adjunction in the other direction, the solution on the left is obtained.  The arguments for the right module and bimodule cases are completely analogous.
\end{proof}

\begin{remark}
    Closer inspection of the lifting argument above shows that the free module $A\otimes P$ is projective whenever $P$ is projective.  The fact that free modules aren't necessarily projective is important to remember if one is working in categories that are not semisimple to begin with.  We will not focus on such categories here, but we hope to translate the theory of tensor (not necessarily semisimple) categories into the nAC context, and this observation will be useful for that purpose.
\end{remark}

\begin{corollary}\label{Sep=>SS}
    A separable category is semisimple.
\end{corollary}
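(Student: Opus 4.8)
The plan is to deduce the corollary immediately from Lemma~\ref{CatsOfMods4ASepAlgAreSS} by taking the ambient monoidal category there to be $\Vec_{\mathbb K}$ itself. By Definition~\ref{Def:Separable}, any separable category $\s C$ is equivalent to $A\Mod$ for some separable algebra $A$ in $\Vec_{\mathbb K}$, so it suffices to prove that $A\Mod$ is semisimple.

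First I would check that $\Vec_{\mathbb K}$ satisfies the hypotheses of Lemma~\ref{CatsOfMods4ASepAlgAreSS}: it is a finite $\mathbb K$-linear abelian category, it is semisimple since every finite-dimensional vector space is free and hence projective, and it is monoidal under $\otimes_{\mathbb K}$ with unit $\mathbb K$. Applying that lemma to $A$ regarded as an algebra in $\Vec_{\mathbb K}$, the left-module case reads: $_A(\Vec_{\mathbb K}) = A\Mod$ is semisimple. Transporting this conclusion along the equivalence $\s C\simeq A\Mod$ finishes the argument, since semisimplicity in our sense---that every object is projective---is preserved by equivalences of abelian categories.

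There is no genuine obstacle here: the real work was already done in the proof of Lemma~\ref{CatsOfMods4ASepAlgAreSS}, where the separability splitting $\theta:A\to A\otimes A$ was used to realize an arbitrary module as a direct summand of a free module $A\otimes M$, thereby reducing its projectivity to the projectivity of objects of $\Vec_{\mathbb K}$. The only point deserving a moment's attention is to remember that \emph{semisimple} here means \emph{all objects projective}, so that no separate discussion of simple objects or their endomorphism division algebras enters the proof.
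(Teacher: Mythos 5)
Your argument is correct and is exactly the derivation the paper intends: the corollary follows from Lemma~\ref{CatsOfMods4ASepAlgAreSS} applied with ambient category $\Vec_{\mathbb K}$, together with Definition~\ref{Def:Separable} identifying a separable category with $A\Mod$ for a separable algebra $A$ in $\Vec_{\mathbb K}$. Your checks that $\Vec_{\mathbb K}$ meets the lemma's hypotheses and that semisimplicity transports along the equivalence are exactly the (routine) points the paper leaves implicit.
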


\begin{remark}\label{AlgFreeSepNote}
    The following is an equivalent `algebra free' definition of separability: Let $\s C$ be a finite $\mathbb K$-linear abelian category, and let $\mathrm{Rex}(\s C,\s C)$ be the category of right-exact $\mathbb K$-linear endofunctors (of $\s C$) with morphisms the natural transformations between such functors.  The category $\s C$ is said to be separable if the identity functor $\id_{\s C}:\s C\to\s C$ is projective as an object in $\mathrm{Rex}(\s C,\s C)$.
\end{remark}

\begin{theorem}\label{SeparabilityImplications}
    Let $\s C$ be a separable category, and let $A$ be an algebra object in of $\s C$.  If $A$ is separable, then the categories $_A\s C$, $\s C_A$ and $_A\s C_A$ are all separable.  If $\,_A\s C_A$ is semisimple, then $A$ is separable.
\end{theorem}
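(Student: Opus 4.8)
My plan is to handle the two halves of the statement separately, since they are of very different character.

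The implication ``${}_{A}\s C_{A}$ semisimple $\Rightarrow$ $A$ separable'' is the easy one, and I would prove it directly. Regard $A\otimes A$ as an $A$-bimodule with left action $\mu\otimes\id_A$ and right action $\id_A\otimes\mu$; then $\mu\colon A\otimes A\to A$ is a bimodule morphism (bimodule-linearity is precisely the associativity axiom for $\mu$), and it is already split epic in $\s C$ via the section $(\id_A\otimes\eta)\circ r_A^{-1}$ coming from the unit axiom, hence epic in ${}_{A}\s C_{A}$. In a semisimple abelian category every epimorphism splits, so $\mu$ admits a section $\theta$ \emph{in} ${}_{A}\s C_{A}$, and such a $\theta$ is exactly a separability datum for $A$.

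For the other implication, ``$A$ separable $\Rightarrow$ ${}_{A}\s C$, $\s C_{A}$, ${}_{A}\s C_{A}$ all separable,'' I would use the algebra-free characterization of separability from Remark \ref{AlgFreeSepNote}: a finite $\mathbb K$-linear abelian category is separable iff its identity functor is projective in the corresponding $\Rex$-category. First note that $\s C$ separable forces $\s C$ semisimple (Corollary \ref{Sep=>SS}), so every object is projective and every $\mathbb K$-linear functor out of $\s C$ or ${}_{A}\s C$ is exact. Let $F=A\otimes(-)\colon\s C\to{}_{A}\s C$, let $U\colon{}_{A}\s C\to\s C$ be the forgetful functor, with $F\dashv U$ and counit $\epsilon$. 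Separability of $A$ provides a natural section $\theta$ of $\epsilon$ --- this is exactly the family $\theta_M\colon M\to A\otimes U(M)$ constructed in the proof of Lemma \ref{CatsOfMods4ASepAlgAreSS} --- so $\id_{{}_{A}\s C}$ is a retract of $F\circ U$ in $\Rex({}_{A}\s C,{}_{A}\s C)$. Since a retract of a projective is projective, it is enough to show $FU$ is projective there; and by the adjunction $F\dashv U$ there is a natural isomorphism $\Hom_{\Rex({}_{A}\s C,{}_{A}\s C)}(FU,-)\cong\Hom_{\Rex({}_{A}\s C,\s C)}(U,\,U\circ-)$, where $\Rex({}_{A}\s C,\s C)$ denotes right-exact $\mathbb K$-linear functors ${}_{A}\s C\to\s C$. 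Post-composition with the exact functor $U$ preserves epimorphisms, so everything reduces to showing that $U$ is projective in $\Rex({}_{A}\s C,\s C)$, using only that $\s C$ is separable.

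This last point is the step I expect to be the main obstacle. I would pick a projective generator $G$ of $\s C$, set $B=\End_\s C(G)^{\mathrm{op}}$ (so $\s C\simeq B\Mod$, with $B$ separable since $\s C$ is), and note that $A\otimes G$ is a projective generator of ${}_{A}\s C$ (free modules on projectives are projective, by the remark after Lemma \ref{CatsOfMods4ASepAlgAreSS}, and it generates since every module is a quotient of $A\otimes U(M)$), so ${}_{A}\s C\simeq R\Mod$ with $R=\End_{{}_{A}\s C}(A\otimes G)^{\mathrm{op}}$. Under the Eilenberg--Watts identification $\Rex({}_{A}\s C,\s C)\simeq(B\otimes_{\mathbb K}R^{\mathrm{op}})\Mod$, the functor $U$ corresponds to the $(B,R)$-bimodule $P=\Hom_\s C(G,A\otimes G)$, which is projective as a left $B$-module (because $\s C$ is semisimple) and free of rank one as a right $R$-module (via the free-forgetful isomorphism $\Hom_\s C(G,A\otimes G)\cong\End_{{}_{A}\s C}(A\otimes G)$). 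The remaining ingredient is the classical fact that, $B$ being separable over $\mathbb K$, a $(B,R)$-bimodule projective on both sides is projective as a $B\otimes_{\mathbb K}R^{\mathrm{op}}$-module --- proved by using the separability idempotent of $B$ to split $B\otimes_{\mathbb K}P\twoheadrightarrow P$ as bimodules, with $B\otimes_{\mathbb K}P$ being $B\otimes_{\mathbb K}R^{\mathrm{op}}$-projective because $P$ is $R^{\mathrm{op}}$-projective and $B$ is $\mathbb K$-free. Dovetailing the Eilenberg--Watts bookkeeping with this module-theoretic input carefully is the delicate part; the rest is formal.

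Finally, the cases $\s C_{A}$ and ${}_{A}\s C_{A}$ are handled by the identical argument, with $F$ replaced by $(-)\otimes A$, respectively $A\otimes(-)\otimes A$, and $U$ by the corresponding forgetful functor; separability of $A$ still supplies the natural section of the counit (the bimodule splitting being the ``completely analogous'' one noted at the end of the proof of Lemma \ref{CatsOfMods4ASepAlgAreSS}), and nothing else in the argument changes.
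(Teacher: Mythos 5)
Your proof is correct, but the first half takes a genuinely different route from the paper. For the implication ``$A$ separable $\Rightarrow$ the module categories are separable,'' the paper argues concretely: it first gets semisimplicity and finiteness of $_A\s C$, writes $_A\s C\simeq E\Mod$, and then, invoking the Artin--Wedderburn-style classification of Example \ref{SeparableAW}, reduces everything to showing that each division ring $D=\End_{_A\s C}(M)$ is separable over $\mathbb K$. It does this by building a conditional expectation $P:\End_{\s C}(M)\twoheadrightarrow D$ from the separability datum of $A$, exhibiting $D$ as a $D$-bimodule summand of the separable algebra $T=\End_{\s C}(M)$, and hence of the free bimodule $(D\otimes_{\mathbb K}D)^{\oplus m^2}$. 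You instead work at the level of functor categories: you use the algebra-free characterization of Remark \ref{AlgFreeSepNote}, exhibit $\id_{_A\s C}$ as a retract of $FU$ via the natural splitting of the counit, transpose along $F\dashv U$ to reduce to projectivity of $U$ in $\Rex(_A\s C,\s C)$, and finish with Eilenberg--Watts plus the classical fact that over a $\mathbb K$-separable algebra $B$ a $(B,R)$-bimodule that is projective on one side is bimodule-projective. Your route avoids the classification of separable algebras entirely and treats the three cases $_A\s C$, $\s C_A$, $_A\s C_A$ uniformly, at the cost of importing two external inputs the paper does not prove: the equivalence asserted in Remark \ref{AlgFreeSepNote} (which is standard, since the identity functor of $R\Mod$ corresponds to the bimodule $R$) and the one-sided-projectivity lemma for bimodules over separable algebras. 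Your treatment of the converse (``$_A\s C_A$ semisimple $\Rightarrow$ $A$ separable'') is the same as the paper's: $\mu$ is a bimodule epimorphism, so semisimplicity forces it to split.
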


\begin{proof}
    To prove the first claim, suppose $A$ is separable.  By corollary \ref{CatsOfMods4ASepAlgAreSS}, we know that $_A\s C$ is semisimple.  Let $M$ be a simple $A$-module.  By the semisimplicity of $\s C$, when we forget the module structure, we can decompose $M$ into a direct sum of simple objects like $M\cong\bigoplus_iX_i$ in $\s C$.  We have seen in the proof of Lemma \ref{CatsOfMods4ASepAlgAreSS} that $M$ is a submodule of the object $A\otimes M$.  We can combine these to find that
    \[M\hookrightarrow A\otimes M\;\cong\;\bigoplus_iA\otimes X_i\,.\]
    By simplicity of $M$ as a module, it follows that $M$ can be embedded as a submodule of $A\otimes X_j$ for some simple object $X_j$ in $\s C$.  Thus all simple modules of $A$ appear as module summands of the module $A\otimes X:=A\otimes(\bigoplus_iX_i)$.  These simple summands correspond to projections in the finite dimensional algebra $\End_{_A\s C}(A\otimes X)$, and so there can only be finitely many of them.
    
    We have established that $_A\s C$ is a finite semisimple $\mathbb K$-linear abelian category, and so there is an equivalence $E\Mod\simeq\hspace{0pt} _A\s C$.  By Schur's lemma (\ref{Schur'sLemma}), the matrix blocks $M_n(D)$ of $E$ correspond to isomorphism classes of simple modules, and the classification in Example \ref{SeparableAW} shows that it will suffice to show that the underlying division ring $D$ of each of these matrix blocks is a separable algebra.  This division ring is of the form $D=\End_{_A\s C}(M)$ for some simple module $M$.
    
    Let us denote by $T=\End_{\s C}(M)$ the algebra of all endomorphisms of $M$ as an object in $\s C$.  The definition of morphisms in $_A\s C$ shows that we have an inclusion of algebras $D\hookrightarrow T$.  There is a special element $P\in \End_{\mathbb K}(T)$ defined by the following construction.
    \[P\Big(f:M\to M\Big):=\Big(M\hookrightarrow A\otimes M \mathop{\longrightarrow}\limits^{\id_A\otimes f} A\otimes M\mathop{\longrightarrow}\limits^{\mu_M}M\Big)\,.\]
    From this construction, it is clear that $P(f)\in D$ for any $f\in T$, and $P(f)=f$ when $f\in D$.  Slightly less obvious is the fact that
    \[\forall g\in T, \forall f,h\in D,\hspace{10pt} P(fgh)\;=\;fP(g)h\]
    In other words, if we think of $T$ as a $D$-bimodule, then $P$ determines a bimodule projection $T\twoheadrightarrow D$ that is a splitting of the inclusion $D\hookrightarrow T$.  Thus $D$ is a summand of the $D$-bimodule $T$.
    
    Since $D$ is a division algebra, $_DT\cong (_DD)^{\oplus m}$ as left modules (for some $m$).  By dimension arguments, $T_D\cong(D_D)^{\oplus m}$ as right modules with the same $m$, but possibly using a separate isomorphism.  Since we have assumed the category $\s C$ to be separable, we know that $T$ is separable as an algebra in $\Vec_{\mathbb K}$, so we have a bimodule inclusion
    \[D\hookrightarrow T\hookrightarrow T\otimes_{\mathbb K}T\cong (_DD)^m\otimes_{\mathbb K}(D_D)^m\cong(D\otimes_{\mathbb K}D)^{\oplus m^2}\,.\]
    This shows that $D$ is a summand of a  projective $D$-bimodule, and hence that $D$ itself is projective as a $D$-bimodule.  This is sufficient to prove that the multiplication map $D\otimes D\to D$ has a splitting as a bimodule map, so $D$ is separable.  The arguments for $\mathcal C_A$ and $_A\mathcal C_A$ are analogous.
    
    Now suppose that $\,_A\s C_A$ is semisimple.  This means that all objects are projective, so in particular $A$ is projective as an object in $\,_A\s C_A$.  Since the multiplication map is an epimorphism of bimodules, it must admit a splitting, and thus $A$ is separable.
\end{proof}

\clearpage
\newpage

\addcontentsline{toc}{section}{Bibliography}
\printbibliography

@article {bruguieresCategoricalCentersReshetikhinTuraev2008,
    AUTHOR = {Brugui\`eres, Alain and Virelizier, Alexis},
     TITLE = {Categorical centers and {R}eshetikhin-{T}uraev invariants},
   JOURNAL = {Acta Math. Vietnam.},
  FJOURNAL = {Acta Mathematica Vietnamica},
    VOLUME = {33},
      YEAR = {2008},
    NUMBER = {3},
     PAGES = {255--277},
      ISSN = {0251-4184},
   MRCLASS = {57M27 (16T05 18C20)},
  MRNUMBER = {2501845},
MRREVIEWER = {Alexander Zvonkin},
}

@article {MR2677836,
    AUTHOR = {Etingof, Pavel and Nikshych, Dmitri and Ostrik, Victor},
     TITLE = {Fusion categories and homotopy theory},
      NOTE = {With an appendix by Ehud Meir},
   JOURNAL = {Quantum Topol.},
  FJOURNAL = {Quantum Topology},
    VOLUME = {1},
      YEAR = {2010},
    NUMBER = {3},
     PAGES = {209--273},
      ISSN = {1663-487X,1664-073X},
   MRCLASS = {18D10 (55S35)},
  MRNUMBER = {2677836},
MRREVIEWER = {Juan\ Mart\'in\ Mombelli},
       DOI = {10.4171/QT/6},
       URL = {https://doi.org/10.4171/QT/6},
}

@misc{gaiotto2019condensationshighercategories,
      title={Condensations in higher categories}, 
      author={Davide Gaiotto and Theo Johnson-Freyd},
      year={2019},
      eprint={1905.09566},
      archivePrefix={arXiv},
      primaryClass={math.CT},
      url={https://arxiv.org/abs/1905.09566}, 
}

@book{loregian_2021,
    place={Cambridge},
    series={London Mathematical Society Lecture Note Series},
    title={(Co)end Calculus}, DOI={10.1017/9781108778657},
    publisher={Cambridge University Press},
    author={Loregian, Fosco},
    year={2021},
    collection={London Mathematical Society Lecture Note Series}}

@book {bakalovLectures,
    AUTHOR = {Bakalov, Bojko and Kirillov, Jr., Alexander},
     TITLE = {Lectures on tensor categories and modular functors},
    SERIES = {University Lecture Series},
    VOLUME = {21},
 PUBLISHER = {American Mathematical Society, Providence, RI},
      YEAR = {2001},
     PAGES = {x+221},
      ISBN = {0-8218-2686-7},
   MRCLASS = {18D10 (17B37 57M27 57R56 81R50 81T40 81T45)},
  MRNUMBER = {1797619},
MRREVIEWER = {J. Stasheff},
       DOI = {10.1090/ulect/021},
       URL = {https://doi.org/10.1090/ulect/021},
}

@book {MR3242743,
    AUTHOR = {Etingof, Pavel and Gelaki, Shlomo and Nikshych, Dmitri and Ostrik, Victor},
    TITLE = {Tensor categories},
    SERIES = {Mathematical Surveys and Monographs},
    VOLUME = {205},
    PUBLISHER = {American Mathematical Society, Providence, RI},
    YEAR = {2015},
    PAGES = {xvi+343},
    ISBN = {978-1-4704-2024-6},
    MRCLASS = {18D10 (16T05)},
    MRNUMBER = {3242743},
    MRREVIEWER = {Julien Bichon},
    DOI = {10.1090/surv/205},
    URL = {https://doi.org/10.1090/surv/205},
}

@article{sanfordThesis,
author={Sanford,Sean},
year={2022},
title={Fusion categories over non-algebraically closed fields},
journal={ProQuest Dissertations and Theses},
pages={1--172},
keywords={Categories; Frobenius-Perron; Fusion; Non-algebraically closed; Separable; Tensor; Mathematics; 0405:Mathematics},
url={https://proxyiub.uits.iu.edu/login?qurl=https\%3A\%2F\%2Fwww.proquest.com\%2Fdissertations-theses\%2Ffusion-categories-over-non-algebraically-closed\%2Fdocview\%2F2715420938\%2Fse-2},
}

@article{TAMBARA1998692,
	title = {Tensor categories with fusion rules of self-duality for finite abelian groups},
	volume = {209},
	issn = {0021-8693},
	url = {https://www.sciencedirect.com/science/article/pii/S0021869398975585},
	doi = {https://doi.org/10.1006/jabr.1998.7558},
	abstract = {Semisimple tensor categories with fusion rules of self-duality for finite abelian groups are classified. As an application, we prove that the Tannaka duals of the dihedral and the quaternion groups of order 8 and the eight-dimensional Hopf algebra of Kac and Paljutkin are not isomorphic as abstract tensor categories whereas they have the same fusion rule.},
	number = {2},
	journal = {Journal of Algebra},
	author = {Tambara, Daisuke and Yamagami, Shigeru},
	year = {1998},
	pages = {692--707},
}

@article{wenZooQuantumtopologicalPhases2017,
	title = {Zoo of quantum-topological phases of matter},
	volume = {89},
	issn = {0034-6861, 1539-0756},
	url = {http://arxiv.org/abs/1610.03911},
	doi = {10.1103/RevModPhys.89.041004},
	abstract = {What are topological phases of matter? First, they are phases of matter at zero temperature. Second, they have a non-zero energy gap for the excitations above the ground state. Third, they are disordered liquids that seem have no feature. But those disordered liquids actually can have rich patterns of many-body entanglement representing new kinds of order. This paper will give a simple introduction and a brief survey of topological phases of matter. We will first discuss topological phases that have topological order (ie with long range entanglement). Then we will cover topological phases that have no topological order (ie with only short-range entanglement).},
	language = {en},
	number = {4},
	urldate = {2021-10-20},
	journal = {Rev. Mod. Phys.},
	author = {Wen, Xiao-Gang},
	month = dec,
	year = {2017},
	note = {arXiv: 1610.03911},
	keywords = {Condensed Matter - Strongly Correlated Electrons},
	pages = {041004},
	annote = {Comment: 18 pages, 8 figures, 4 tables. A short review, expanded version},
	file = {Wen - 2017 - Zoo of quantum-topological phases of matter.pdf:C\:\\Users\\SenB0~\\Zotero\\storage\\YPHLWYA7\\Wen - 2017 - Zoo of quantum-topological phases of matter.pdf:application/pdf},
}

@misc{yuanSeparableAlgebras2016,
    author = {Qiaochu Yuan},
	title = {Separable algebras},
	year = {2016},
    howpublished = {\\url - https://qchu.wordpress.com/2016/03/27/separable-algebras/},
	url = {https://qchu.wordpress.com/2016/03/27/separable-algebras/},
	abstract = {Let \$latex k\$ be a commutative ring and let \$latex A\$ be a \$latex k\$-algebra. In this post we’ll investigate a condition on \$latex A\$ which generalizes the condition that \$latex A\$ is a finit…},
	urldate = {2022-03-18},
	journal = {Annoying Precision},
	file = {Snapshot:C\:\\Users\\SenB0~\\Zotero\\storage\\F7XURDXC\\separable-algebras.html:text/html},
}

@article {turaevStateSum,
    AUTHOR = {Turaev, Vladimir and Viro, Oleg},
     TITLE = {State sum invariants of {$3$}-manifolds and quantum
              {$6j$}-symbols},
   JOURNAL = {Topology},
  FJOURNAL = {Topology. An International Journal of Mathematics},
    VOLUME = {31},
      YEAR = {1992},
    NUMBER = {4},
     PAGES = {865--902},
      ISSN = {0040-9383},
   MRCLASS = {57N10 (16W30 57M25)},
  MRNUMBER = {1191386},
MRREVIEWER = {Louis H. Kauffman},
       DOI = {10.1016/0040-9383(92)90015-A},
       URL = {https://doi.org/10.1016/0040-9383(92)90015-A},
}

@article {ostrikModuleCatsWeakHopf,
    AUTHOR = {Ostrik, Victor},
     TITLE = {Module categories, weak {H}opf algebras and modular
              invariants},
   JOURNAL = {Transform. Groups},
  FJOURNAL = {Transformation Groups},
    VOLUME = {8},
      YEAR = {2003},
    NUMBER = {2},
     PAGES = {177--206},
      ISSN = {1083-4362},
   MRCLASS = {18D10 (16D90 17B10 18E10)},
  MRNUMBER = {1976459},
MRREVIEWER = {Volodymyr V. Lyubashenko},
       DOI = {10.1007/s00031-003-0515-6},
       URL = {https://doi.org/10.1007/s00031-003-0515-6},
}

@article {lopezFrancoTensorProducts,
    AUTHOR = {L\'{o}pez Franco, Ignacio},
     TITLE = {Tensor products of finitely cocomplete and abelian categories},
   JOURNAL = {J. Algebra},
  FJOURNAL = {Journal of Algebra},
    VOLUME = {396},
      YEAR = {2013},
     PAGES = {207--219},
      ISSN = {0021-8693},
   MRCLASS = {18E10 (16D90)},
  MRNUMBER = {3108080},
MRREVIEWER = {Wolfgang Rump},
       DOI = {10.1016/j.jalgebra.2013.08.015},
       URL = {https://doi.org/10.1016/j.jalgebra.2013.08.015},
}

@article {nymanAGeneralization,
    AUTHOR = {Nyman, Adam and Smith, S. Paul},
     TITLE = {A generalization of {W}atts's theorem: right exact functors on
              module categories},
   JOURNAL = {Comm. Algebra},
  FJOURNAL = {Communications in Algebra},
    VOLUME = {44},
      YEAR = {2016},
    NUMBER = {7},
     PAGES = {3160--3170},
      ISSN = {0092-7872},
   MRCLASS = {18F99 (14A22 16D90 18A25)},
  MRNUMBER = {3507176},
MRREVIEWER = {Riccardo Moschetti},
       DOI = {10.1080/00927872.2015.1065873},
       URL = {https://doi.org/10.1080/00927872.2015.1065873},
}

@article {fuchsEWCalcRadford,
    AUTHOR = {Fuchs, J\"{u}rgen and Schaumann, Gregor and Schweigert, Christoph},
     TITLE = {Eilenberg-{W}atts calculus for finite categories and a
              bimodule {R}adford {$S^4$} theorem},
   JOURNAL = {Trans. Amer. Math. Soc.},
  FJOURNAL = {Transactions of the American Mathematical Society},
    VOLUME = {373},
      YEAR = {2020},
    NUMBER = {1},
     PAGES = {1--40},
      ISSN = {0002-9947},
   MRCLASS = {16T05 (16Gxx 16T10 18M15)},
  MRNUMBER = {4042867},
MRREVIEWER = {Gongxiang Liu},
       DOI = {10.1090/tran/7838},
       URL = {https://doi.org/10.1090/tran/7838},
}

@book {johnson2DCats,
    AUTHOR = {Johnson, Niles and Yau, Donald},
     TITLE = {2-dimensional categories},
 PUBLISHER = {Oxford University Press, Oxford},
      YEAR = {2021},
     PAGES = {xix+615},
      ISBN = {978-0-19-887138-5},
   MRCLASS = {18-02 (18N10 18N15 18N20)},
  MRNUMBER = {4261588},
       DOI = {10.1093/oso/9780198871378.001.0001},
       URL = {https://doi.org/10.1093/oso/9780198871378.001.0001},
}

@article {mugerFStCaTII,
    AUTHOR = {M\"{u}ger, Michael},
     TITLE = {From subfactors to categories and topology. {II}. {T}he
              quantum double of tensor categories and subfactors},
   JOURNAL = {J. Pure Appl. Algebra},
  FJOURNAL = {Journal of Pure and Applied Algebra},
    VOLUME = {180},
      YEAR = {2003},
    NUMBER = {1-2},
     PAGES = {159--219},
      ISSN = {0022-4049},
   MRCLASS = {18D10 (16S99 46L37 46M15 81R50)},
  MRNUMBER = {1966525},
MRREVIEWER = {R. Gylys},
       DOI = {10.1016/S0022-4049(02)00248-7},
       URL = {https://doi.org/10.1016/S0022-4049(02)00248-7},
}

@article {frobeniusUberLineare,
    AUTHOR = {Frobenius, Ferdinand},
     TITLE = {Ueber lineare {S}ubstitutionen und bilineare {F}ormen},
   JOURNAL = {J. Reine Angew. Math.},
  FJOURNAL = {Journal f\"{u}r die Reine und Angewandte Mathematik. [Crelle's
              Journal]},
    VOLUME = {84},
      YEAR = {1878},
     PAGES = {1--63},
      ISSN = {0075-4102},
   MRCLASS = {DML},
  MRNUMBER = {1581640},
       DOI = {10.1515/crelle-1878-18788403},
       URL = {https://doi.org/10.1515/crelle-1878-18788403},
}

@misc{kong2019semisimple,
      title={Semisimple and separable algebras in multi-fusion categories}, 
      author={Liang Kong and Hao Zheng},
      year={2019},
      eprint={1706.06904},
      archivePrefix={arXiv},
      primaryClass={math.QA}
}

@incollection {dayCentres,
    AUTHOR = {Day, Brian and Street, Ross},
     TITLE = {Centres of monoidal categories of functors},
 BOOKTITLE = {Categories in algebra, geometry and mathematical physics},
    SERIES = {Contemp. Math.},
    VOLUME = {431},
     PAGES = {187--202},
 PUBLISHER = {Amer. Math. Soc., Providence, RI},
      YEAR = {2007},
   MRCLASS = {18D10 (18D05)},
  MRNUMBER = {2342829},
MRREVIEWER = {Jos\'{e} M. Fern\'{a}ndez Vilaboa},
       DOI = {10.1090/conm/431/08273},
       URL = {https://doi.org/10.1090/conm/431/08273},
}

@article {shimizuTheMonoidalCenter,
    AUTHOR = {Shimizu, Kenichi},
     TITLE = {The monoidal center and the character algebra},
   JOURNAL = {J. Pure Appl. Algebra},
  FJOURNAL = {Journal of Pure and Applied Algebra},
    VOLUME = {221},
      YEAR = {2017},
    NUMBER = {9},
     PAGES = {2338--2371},
      ISSN = {0022-4049},
   MRCLASS = {18D10 (16T05)},
  MRNUMBER = {3631720},
MRREVIEWER = {Julien Bichon},
       DOI = {10.1016/j.jpaa.2016.12.037},
       URL = {https://doi.org/10.1016/j.jpaa.2016.12.037},
}

\end{document}